\documentclass[11pt, a4paper]{amsart}

\usepackage{amsfonts,amsmath,amssymb, amscd,fullpage,mathtools}
\usepackage[all]{xy}
\usepackage{enumitem}
\usepackage{xcolor} 
\usepackage[utf8]{inputenc}
\usepackage[T1]{fontenc}

\newtheorem{theorem}{Theorem}[section]
\newtheorem{lemma}[theorem]{Lemma}
\newtheorem{definition}[theorem]{Definition}
\newtheorem{proposition}[theorem]{Proposition}
\newtheorem{corollary}[theorem]{Corollary}

\theoremstyle{definition}

\newtheorem{remark}[theorem]{Remark}
\newtheorem{example}[theorem]{Example}

\newtheorem*{notation}{Notation}

\newcommand\pf{\begin{proof}}
\newcommand\epf{\end{proof}}

\definecolor{myred}{rgb}{0.75,0,0}
\definecolor{mygreen}{rgb}{0,0.5,0}
\definecolor{myblue}{rgb}{0,0,0.65}

\newcommand\Oo{\mathcal{O}}
\newcommand\C{\mathcal{C}}
\newcommand\M{\mathcal{M}}
\newcommand\Z{\mathbb{Z}}

\newcommand\eps{\varepsilon}

\newcommand\co{\operatorname{co}}

\DeclareMathOperator{\Hom}{Hom}

\DeclareMathOperator{\id}{id}

\DeclareMathOperator{\ob}{ob}

\numberwithin{equation}{section}

\hyphenation{comod-ule}

\newcommand\op{\operatorname{op}}
\newcommand\N{\mathbb{N}}
\newcommand\ot{\otimes}

\DeclareMathOperator{\can}{can}

\def\lg{\langle}
\def\rg{\rangle}
\def\sl{\mathfrak{sl}}
\def\g{\mathfrak{g}}
\def\m{\mathfrak{m}}

\def\tr{\triangleright}
\def\tl{\triangleleft}
\def\tb{\bowtie}

\def\btr{\blacktriangleright}
\def\btl{\blacktriangleleft}
\newcommand{\btb}{\mathrel{\blacktriangleright\mspace{-3mu}\blacktriangleleft}}

\def\ttr{\overset{{\text{\tiny{t}}}}{\tr}}
\def\ttl{\overset{{\text{\tiny{t}}}}{\tl}}
\newcommand{\ttb}{\mathrel{\ttr\mspace{-2mu}\ttl}}

\def\hr{\rightharpoonup}
\def\hl{\leftharpoonup}

\allowdisplaybreaks

\newcommand{\GL}{\operatorname{GL}}

\title{Hopf-Galois objects over bicrossed product Hopf algebras and twisting maps}

\author{Julien Bichon}
\address{J. Bichon:	Universit\'e Clermont Auvergne, CNRS, LMBP, F-63000 CLERMONT-FERRAND, FRANCE}
\email{julien.bichon@uca.fr}

\author{Agust\'in Garc\'ia Iglesias}
\address{A. Garc\'ia Iglesias: Facultad de Matem\'atica, Astronom\'ia, F\'isica y Computaci\'on (FaMAF), 
	Medina Allende S/N, 
	Universidad Nacional de C\'ordoba,
	Ciudad Universitaria, C\'ordoba (X5000HUA),
	Argentina. }
\email{agustingarcia@unc.edu.ar}

\begin{document}




\bigbreak

\begin{abstract}
	We describe Hopf-Galois objects over bicrossed product Hopf algebras. More precisely, we show that any right Hopf-Galois object over a bicrossed product of Hopf algebras is obtained from Hopf-Galois objects over the two factors and a certain twisting map, while the unique Hopf algebra making it into a bi-Galois object is again a bicrossed product.
\end{abstract}

\maketitle

\section{Introduction}\label{sec:intro}


Hopf-Galois extensions, introduced in \cite{kt}, are the natural analogues in noncommutative algebra of principal homogeneous spaces or torsors in algebraic geometry. The case with trivial base, called a Hopf-Galois object,  is already of great interest, because of its use in the study of tensor categories of comodules \cite{ulb,sc1}, classification questions for pointed Hopf algebras \cite{aagmv,ag}, homological algebra questions \cite{bic,yu,wyz,bic22}, or for example, with the recent connection found with quantum information theory \cite{bcehpsw}.

The question of the classification of Hopf-Galois objects over a given Hopf algebra is thus an important one and has been much studied in the past 30 years. While the case of group algebras or of enveloping algebras of Lie algebras is easily expressed in terms of second cohomology groups calculations, there is no general scheme, and even the case of coordinate algebras on affine algebraic groups is yet not fully understood, see \cite{gel} for several particular classes. 

The starting point  of this paper was the fact that the Weyl algebra $A_1(k)$ is a Hopf-Galois object  over the polynomial algebra $k[x,y]$, that we have noticed  in \cite{biga}, but  was certainly well-known before, and at least was known from \cite[Example 4.8]{gel}. Since $A_1(k)$ is, in characteristic zero, the algebra of differential operators on the additive algebraic group $k$, this leads to the question whether algebras of differential operators on affine algebraic groups always are Hopf-Galois objects. We observe (Proposition \ref{prop:diffG}) that this is indeed the case, combining a classical result of Heyneman-Sweedler \cite{hs} together with the generalized quantum double construction of Doi-Takeuchi \cite{dt94}. It results, for an affine group scheme $G$, that ${\rm Diff}(G)$, the algebra of differential operators on $G$,  is a right Hopf-Galois object over the tensor product Hopf algebra $\Oo(G)^{\rm cop}  \otimes \mathcal D(G)$, where  $\Oo(G)$ is the commutative Hopf algebra corresponding to $G$ and the algebra $\mathcal D(G)\subset \Oo(G)^*$ is the Hopf  algebra of distributions on $G$ (see Section \ref{sec:hhp} for more details).


Hopf-Galois objects over tensor products of Hopf algebras, or more generally over generalized Drinfeld doubles, have been described by Schauenburg \cite{sctaft, sch99}. Therefore the natural next step seems to study the case of a general bicrossed product of Hopf algebras (the generalized Drinfeld double case being the one when the bicrossed product comes from a pairing), and this is precisely the contribution of this paper. We show that the Hopf-Galois objects over a Hopf algebra that is a bicrossed product of two Hopf algebras are described by Hopf-Galois objects over the two factors together with certain twisting maps.
 Our results and methods are illustrated by a complete study of a bicrossed product that is not produced by a pairing.


The paper is organized as follows. We start in Section \ref{sec:hhp} by recalling some basic notions about Hopf Galois objects and pairings, from which we deduce, via Proposition \ref{prop:smashgaloishopfhopf}, that for an affine group scheme $G$, the algebra ${\rm Diff}(G)$  is a Hopf-Galois object. In Section \ref{sec:twistings}  we recall the notions of twisting map for a pair of algebras and of bicrossed product of Hopf algebras. We then discuss the relevant notion of (skew) pairing in the bicrossed product context and generalize Proposition \ref{prop:smashgaloishopfhopf} to this context (Proposition \ref{prop:skewbicrossed}),  obtaining in this way an interesting source of  Hopf-Galois objects for Hopf bicrossed producs.
	In Section \ref{sec:hopf-galois-bicrossed}, we study the relation  between the bicrossed product of two Hopf algebras and the appropriate twisted tensor products of a pair of Hopf-Galois objects for them. This leads to a description of the right  Hopf-Galois objects (Theorem \ref{thm:bicrossedtwist1}) and of the corresponding Hopf algebras over which these are left Hopf-Galois (Theorem \ref{thm:bicrossedtwist2}), which are shown to be bicrossed products as well.
	Section \ref{sec:exbicrossed} provides a detailed analysis of an example of bicrossed product Hopf algebra not arising from a pairing.
	In the final Section \ref{sec:semi} we restrict our  previous results to the case of a the semi-direct product of Hopf algebra. This allows us to provide a more precise presentation of the Hopf-Galois objects in this case in Theorem \ref{thm:galoissemidirect}. We discuss in particular the case of unrolled Hopf algebras. 

\smallskip

We work over a fixed base field $k$, over which all the tensor products are taken. We write $k^\times$ for the units in $k$.

\section{Hopf-Galois objects and pairings}\label{sec:hhp}

\subsection{Hopf-Galois objects}\label{sub:hopfgalois}
Recall that a (right) \textsl{Hopf-Galois object} $A$ over a Hopf algebra $H$ \cite{kt} is a (right) comodule algebra $A$ over $H$ with trivial coinvariants $A^{\co H}=k$ and such that the 
{\it Galois map} 
\[
\can\colon A\ot A\to A\ot H, \qquad \can(a\ot b)=ab_{(0)}\ot b_{(1)}, \ 	a,b\in A
\]
is bijective. Along the text, we will consider the map 
\begin{align}\label{eqn:kappa}
\kappa : H  & \longrightarrow A \otimes A, &
h &\longmapsto h^{(1)} \otimes h^{(2)}\coloneqq {\rm can}^{-1}(1_A\otimes h);
\end{align}
hence we have that:
\begin{align}\label{eqn:kappa-on-elements}
h^{(1)}(h^{(2)})_{(0)} \otimes (h^{(2)})_{(1)}=1\ot h, \qquad h\in H.
\end{align}
As well, we have, see
\cite[Remark 3.4 2\,(b)]{schn}:
\begin{align}\label{eqn:canonical-schneider}
a_{(0)}(a_{(1)})^{(1)} \otimes (a_{(1)})^{(2)}=1\ot a, \qquad a\in A.
\end{align}
Similarly, there is a notion of left Hopf-Galois object, and a notion of bi-Galois object \cite{sc1}. Schauenburg \cite[Theorem 3.5]{sc1} has shown that, given a right Hopf-Galois object $A$ over $H$, there exists a unique, up to isomorphism, Hopf algebra $L=L(A,H)$ such that $A$ is an $L$-$H$-bi-Galois object. The Hopf algebra $L(A,H)$ is defined by $L(A,H)= (A\otimes A^{\op})^{\co H}$, and the left coaction $\lambda_A : A \to L \otimes A$ is given by $\lambda_A(a) = a_{(0)}\otimes \kappa(a_{(1)})$.

\subsection{Hopf $2$-cocycles} \label{subsec:cocycle} Let $H$ be a Hopf algebra. Recall that  a (left) \textsl{$2$-cocycle} on $H$ is a convolution invertible linear map $\sigma  : H \otimes H \to k$ such that
 $\sigma(x,1)=\varepsilon(x)=\sigma(1,x)$ for any $x\in H$, and we have, for any $x,y,z\in H$,
\[ \sigma(x_{(1)}, y_{(1)})\sigma(x_{(2)}y_{(2)},z)= \sigma(y_{(1)},z_{(1)})\sigma(x,y_{(2)}z_{(2)}).\]
There are several algebras naturally associated to such a $2$-cocycle.

$\bullet$ The algebra $_{\sigma}\!H$ has $H$ as underlying vector space, and product defined by
\begin{align}\label{eqn:cocycle-def-galois}
 x . y = \sigma(x_{(1)},y_{(1)}) x_{(2)}y_{(2)}.
\end{align} 
The comultiplication of $H$ then endows  $_{\sigma}\!H$ with the structure of a right $H$-comodule algebra, making it into a right $H$-Galois object \cite{bm,dt86}. Notice that the $2$-cocycle condition is equivalent to the fact that the above product is associative and has $1$ as unit.

$\bullet$ The Hopf algebra $H^\sigma$ \cite{doi} has $H$ as underlying coalgebra, and product defined by
\begin{align}\label{eqn:cocycle-def}
x . y = \sigma(x_{(1)},y_{(1)}) \sigma^{-1}(x_{(3)},y_{(3)})x_{(2)}y_{(2)}.
\end{align}
The comultiplication of $H$ then endows  $_{\sigma}\!H$ with the structure of a left $H^\sigma$-comodule algebra, making it into a left $H^\sigma$-Galois object, and hence an $H^\sigma$-$H$-bi-Galois object \cite{sc1}.

\subsection{Cogroupoids}\label{sec:cogroupoids}

We briefly recall here part of the terminology and results from \cite{bic} on cogroupoids. We refer the reader to loc.~cit.~for details.

We start by recalling that a cocategory consists of a set of objects $\ob\C$, a $k$-algebra $\C(X,Y)$ for each $X,Y\in\ob\C$ and algebra morphisms, for every $X,Y,Z\in\ob\C$
\begin{align*}
\Delta_{X,Y}^Z\colon & \C(X,Y)\to \C(X,Z)\ot \C(Z,Y), & \eps_X\colon & \C(X,X)\to k
\end{align*}  
satisfying compatibility axioms that mimic those of a coassociative coalgebra, namely 
\begin{align*}
(\Delta_{X,Z}^T\ot \id_{\C(Z,Y)})\circ \Delta_{X,Y}^Z&=(\id_{\C(X,T)}\ot \Delta_{T,Y}^Z)\circ \Delta_{X,Y}^T, \\
(\id_{\C(X,Y)}\ot\eps_Y)\circ \Delta_{X,Y}^Y&=\id_{\C(X,Y)}=(\eps_X\ot \id_{\C(X,Y)}\ot\eps_Y)\circ \Delta_{X,Y}^X.
\end{align*}
for every $X,Y,Z,T\in \ob \C$. The cocategory $\C$ is called connected if $\C(X,Y)\neq 0$ for all $X,Y\in\ob \C$.
In turn, a cogroupoid is a cocategory $\C$ together with linear maps
\[
S_{X,Y}\colon \C(X,Y)\to \C(Y,X), \qquad \forall\, X,Y\in \ob\C
\]
subject to the natural axioms of the antipode on a Hopf algebra. 
It follows from the definitions that a bialgebra is a cocategory with a single object; and $\C(X,X)$ is a Hopf algebra for any object $X$ in a cogroupoid $\C$. 
We use Sweedler's notation $\Delta_{X,Y}^Z(a^{X,Y})=a^{X,Z}_{(1)}\ot a^{Z,X}_{(2)}$ for an element $a^{X,Y}\in\C(X,Y)$.

If $\C$ is a connected cogroupoid, then $\C(X,Y)$ is a $\C(X,X)-\C(Y,Y)$ bi-Galois object for all $X,Y\in\ob\C$, see \cite[Proposition 2.8]{bic}. Conversely, for any Hopf algebra $H$ and a left $H$-Hopf-Galois object $A$, there exists a cogroupoid $\C$ and $X,Y\in\ob\C$ so that $H=\C(X,X)$ and $A=\C(X,Y)$, \cite[Theorem 3.11]{bic}.
We shall also use \cite[Lemma 2.14]{bic}, which shows that $\Delta_{X,Y}^Z$ induces a $\C(X,X)-\C(Y,Y)$-bicomodule algebra isomorphism
\begin{align}\label{eqn:bicom-iso}
	\C(X,Y)\simeq \C(X,Z)\square_{\C(Z,Z)}\C(Z,Y).
\end{align}

If $\C$ is a connected cogroupoid, then \cite[Theorem 2.12]{bic} presents a $k$-linear equivalence of monoidal categories
\[
\M^{\C(X,X)}\simeq^{\ot} \M^{\C(Y,Y)}, \qquad V\mapsto V\square_{\C(X,X)}\C(X,Y)
\]
for every pair $(X,Y)$ of objects in $\C$. The monoidal constraint is given by the isomorphism
\begin{align}\label{eqn:monoidal}
\begin{split}
(V\square_{\C(X,X)}\C(X,Y))\ot (W\square_{\C(X,X)}\C(X,Y))&\to (V\ot W)\square_{\C(X,X)}\C(X,Y)\\
(\sum_i v_i\ot a_i^{X,Y})\ot (\sum_j w_j\ot b_j^{X,Y})&\mapsto \sum_{i,j} v_i\ot  w_j\ot a_i^{X,Y}b_j^{X,Y}
\end{split}
\end{align}
for all $V,W\in\M^{\C(X,X)}$, see \cite[Remark 2.16]{bic}.

Subcocategories, respectively subcogroupoids, $\C'\subseteq \C$ are defined as expected. 
In particular, we make the following remark, which is clear from the definitions and \cite[Proposition 2.13]{bic}.

\begin{lemma}\label{lem:subco}
	Let $\C$ be a connected cogroupoid, $X\in\ob \mathcal C$ and let $H\subseteq \C(X,X)$ be a Hopf subalgebra. Consider, for $Y,Z \in \ob \mathcal C$, the algebra
\[\C_H(Y,Z)=\{a^{Y,Z}\in\C(Y,Z) : a^{Y,X}_{(1)}\ot a^{X,X}_{(2)}\ot a^{X,Z}_{(3)}  \in \C(Y,X)\ot H\ot \C(X,Z)\}
\]
	This defines a connected subcogroupoid  $\C_H$  of $\C$ (with $\ob\C_H=\ob\C$ and the natural restrictions of the maps $\Delta, \eps$ and $S$).
\end{lemma}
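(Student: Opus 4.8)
The plan is to rephrase the defining condition of $\C_H$ as a preimage condition under an iterated comultiplication, and then to verify one at a time that the subspaces $\C_H(Y,Z)$ form a subcocategory stable under $S$, connectedness being immediate. Write $\rho_{Y,Z}\colon\C(Y,Z)\to\C(Y,X)\ot\C(X,X)\ot\C(X,Z)$ for the iterated comultiplication inserting the object $X$ twice, i.e. $\rho_{Y,Z}=(\id\ot\Delta_{X,Z}^X)\circ\Delta_{Y,Z}^X=(\Delta_{Y,X}^X\ot\id)\circ\Delta_{Y,Z}^X$, the two descriptions agreeing by coassociativity. Each $\Delta_{\bullet,\bullet}^X$ is an algebra morphism, hence so is $\rho_{Y,Z}$, and by construction $\C_H(Y,Z)=\rho_{Y,Z}^{-1}\big(\C(Y,X)\ot H\ot\C(X,Z)\big)$.

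With this description the first points become formal. Since $H$ is a unital subalgebra of $\C(X,X)$, the space $\C(Y,X)\ot H\ot\C(X,Z)$ is a unital subalgebra of $\C(Y,X)\ot\C(X,X)\ot\C(X,Z)$, so its preimage $\C_H(Y,Z)$ under the algebra morphism $\rho_{Y,Z}$ is a unital subalgebra of $\C(Y,Z)$. In particular $1_{\C(Y,Z)}\in\C_H(Y,Z)$, which shows simultaneously that every $\C_H(Y,Z)$ is nonzero, so $\C_H$ is connected, and that $\eps_Y$ restricts to $\C_H(Y,Y)\to k$.

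The step that needs care is the stability of $\Delta_{Y,Z}^T$: for $a\in\C_H(Y,Z)$ and $T\in\ob\C$ one must check not merely that the two legs of $\Delta_{Y,Z}^T(a)$ lie in $\C_H(Y,T)$ and $\C_H(T,Z)$, but that $\Delta_{Y,Z}^T(a)\in\C_H(Y,T)\ot\C_H(T,Z)$ inside $\C(Y,T)\ot\C(T,Z)$. I would use the elementary identity $U'\ot V'=(U'\ot V)\cap(U\ot V')$ for subspaces $U'\subseteq U$, $V'\subseteq V$ over a field, which reduces the problem to the two one-sided statements $\Delta_{Y,Z}^T(a)\in\C_H(Y,T)\ot\C(T,Z)$ and $\Delta_{Y,Z}^T(a)\in\C(Y,T)\ot\C_H(T,Z)$. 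For the first, exactness of $-\ot\C(T,Z)$ identifies $\C_H(Y,T)\ot\C(T,Z)$ with the kernel of $(\pi\ot\id)\circ(\rho_{Y,T}\ot\id)$, where $\pi=\id\ot q\ot\id$ and $q\colon\C(X,X)\to\C(X,X)/H$ is the quotient map; and since $(\rho_{Y,T}\ot\id)\circ\Delta_{Y,Z}^T$ and $(\id\ot\id\ot\Delta_{X,Z}^T)\circ\rho_{Y,Z}$ both compute the iterated comultiplication $\C(Y,Z)\to\C(Y,X)\ot\C(X,X)\ot\C(X,T)\ot\C(T,Z)$ (by coassociativity), evaluating on $a\in\C_H(Y,Z)$ lands in $\C(Y,X)\ot H\ot\C(X,T)\ot\C(T,Z)$, which is killed by $\pi\ot\id$. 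The second statement is symmetric, comparing $(\id\ot\rho_{T,Z})\circ\Delta_{Y,Z}^T$ with $(\Delta_{Y,X}^T\ot\id\ot\id)\circ\rho_{Y,Z}$.

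Finally, stability under $S$ is quick. Writing $\rho_{Y,Z}(a)=\sum_i b_i\ot h_i\ot c_i$ with $h_i\in H$, the compatibility of the antipode with the comultiplication in a cogroupoid (the identity $\Delta_{Y,X}^Z\circ S_{X,Y}=\tau\circ(S_{X,Z}\ot S_{Z,Y})\circ\Delta_{X,Y}^Z$, suitably iterated) gives $\rho_{Z,Y}(S_{Y,Z}(a))=\sum_i S_{X,Z}(c_i)\ot S_{X,X}(h_i)\ot S_{Y,X}(b_i)$, and $S_{X,X}(h_i)\in H$ because $H$ is a Hopf subalgebra; hence $S_{Y,Z}(a)\in\C_H(Z,Y)$. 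Together with \cite[Proposition 2.13]{bic} this yields that $\C_H$ is a connected subcogroupoid of $\C$. I expect the only genuine obstacle to be the $\Delta$-step, precisely the passage from the two one-sided inclusions to membership in the tensor product of the two subspaces; everything else is bookkeeping with coassociativity and with exactness of $\ot$ over $k$.
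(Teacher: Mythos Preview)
Your proposal is correct. The paper does not actually give a proof of this lemma: it is stated as ``clear from the definitions and \cite[Proposition 2.13]{bic}'', so your argument is precisely the detailed verification the paper leaves implicit, using coassociativity for the subalgebra and $\Delta$-stability steps and the antipode anti-comultiplicativity from \cite[Proposition 2.13]{bic} for the $S$-stability.
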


\begin{remark}
	Notice that 
	\[\C_H(Y,Z) \simeq \C(Y,X)\square_{\C(X,X)} H\square_{\C(X,X)}\C(X,Z)
	\] and in particular
	\begin{align}\begin{split}\label{eqn:cotensor}
			\C_H(X,Y)&\simeq H\square_{\C(X,X)}\C(X,Y), \qquad\qquad  \C_H(Y,X)\simeq \C(Y,X)\square_{\C(X,X)} H.
	\end{split}
	\end{align}
	As well, we have  $\C_H(X,X)=H$ and
	\begin{align*} \label{eqn:cotensor}
		\C_H(X,Y)&=\{a^{X,Y}\in\C(X,Y) : a^{X,X}_{(1)}\ot a^{X,Y}_{(2)} \in  H\ot \C(X,Y)\}, \\
	\C_H(Y,X)&=\{a^{Y,X}\in\C(Y,X) : a^{Y,X}_{(1)}\ot a^{X,X}_{(2)} \in  \C(Y,X)\ot H\}.
	\end{align*}
\end{remark}


\subsection{Pairings}\label{subsec:pairing}
Let $A$ and $U$ be Hopf algebras. Recall that a \textsl{pairing} between $A$ and $U$ consists of a linear map
$ \tau : A \otimes U \longrightarrow k$
such that for any $a,b \in A$, $x,y\in U$, we have:
\begin{align*}
	\tau(a,1)&=\varepsilon(a), & \tau(1,x)&=\varepsilon(x),\\
 \tau(ab,x) &=  \tau(a,x_{(1)})\tau(b,x_{(2)}) & 
\tau(a,xy) &= \tau(a_{(1)},x)\tau(a_{(2)},y) 
\end{align*}
A \textsl{skew-pairing} between $A$ and $U$ is a pairing between $A^{\rm cop}$ and $U$. 

Doi-Takeuchi have shown \cite[Proposition 1.5]{dt94} that if  $\tau : A \otimes U \to k$ is a  skew-pairing, then
\begin{align*}
 \hat{\tau} : A^{} \otimes U  \otimes A^{} \otimes U &\longrightarrow k, & 
a\otimes x \otimes b \otimes y & \longmapsto \tau(b,x) \varepsilon(a)\varepsilon(y)
\end{align*}
is a $2$-cocycle on $A\otimes U$.
We thus obtain, via the constructions of Subsection \ref{subsec:cocycle}, the algebra  $_{\hat{\tau}}\!(A^{}\otimes U)$  and the Hopf algebra $(A^{}\otimes U)^{\hat{\tau}}$. The Hopf algebra $(A\otimes U)^{\hat{\tau}}$ is denoted $D_\tau(A^{},U)$ and is called a \textsl{generalized Drinfeld double}. 

\subsection{Smash products} \label{sub:smash} Let $U$ be a Hopf algebra and let $A$ be an algebra. A left $U$-module algebra structure on $A$ consists of a linear map
\begin{align*}
 U \otimes A &\longrightarrow A &
x\otimes a &\longmapsto x.a
\end{align*}
making $A$ into a left $U$-module, and such that, for any $x\in U$ and $a,b \in A$, we have 
\[x.(ab)=(x_{(1)}.a) (x_{(2)}.b), \qquad x.1 =\varepsilon(x)1.\]  
If $A$ is left $U$-module algebra, the associated \textsl{smash product} algebra $A\#U$ is the algebra having $A\otimes U$ as underlying vector space, and whose product is defined by 
\[a\#x \cdot b\#y= a(x_{(1)}.b)\# x_{(2)}y.\]

\subsection{Hopf-Galois structure on smash products arising from pairings} \label{subsec:hopfhopf}
Let $A$ and $U$ be Hopf algebras and let
 $\tau : A \otimes U \longrightarrow k$ be a pairing. The pairing then provides a map
\begin{align*}
 U \otimes A & \longrightarrow A &
x\otimes a &\longmapsto x.a= \tau(a_{(2)},x)a_{(1)}
\end{align*}
that endows $A$ with the structure of a left $U$-module algebra structure. We thus can form the smash product algebra $A\#U$, that we denote $A\#_\tau U$ to keep track of $\tau$, whose product is:
\[a\#x \cdot b\#y= a(x_{(1)}.b)\# x_{(2)}y= \tau(b_{(2)},x_{(1)}) ab_{(1)}\# x_{(2)}y.\] 
 It is a simple verification that the above product coincides with the one on  $_{\hat{\tau}}\!(A^{\rm cop}\otimes U)$ as in Subsection \ref{subsec:cocycle}, and thus the considerations in subsections \ref{subsec:cocycle} and  \ref{subsec:pairing} yield the following result.

\begin{proposition}\label{prop:smashgaloishopfhopf}
 Let $A$ and $U$ be Hopf algebras and let $\tau : A \otimes U \longrightarrow k$ be a pairing. Then the smash product algebra $A\#_\tau U$ is a $D_\tau(A^{\rm cop},U)-(A^{\rm cop}\otimes U)$-bi-Galois object, whose left and right coactions are induced by the comultiplication of the tensor product coalgebra $A^{\rm cop}\otimes U$.
\end{proposition}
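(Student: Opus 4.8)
The plan is to recognise $A\#_\tau U$ as one of the cocycle algebras of Subsection \ref{subsec:cocycle} and then simply read off the conclusion. A pairing $\tau\colon A\ot U\to k$ is, by definition, a skew-pairing between $A^{\rm cop}$ and $U$, so by \cite[Proposition 1.5]{dt94} the map $\hat\tau(a\ot x\ot b\ot y)=\tau(b,x)\eps(a)\eps(y)$ is a $2$-cocycle on the tensor product Hopf algebra $H\coloneqq A^{\rm cop}\ot U$; moreover $H^{\hat\tau}=D_\tau(A^{\rm cop},U)$ by definition. Thus, via the constructions recalled in Subsection \ref{subsec:cocycle}, the algebra $_{\hat\tau}H$ carries the structure of an $H^{\hat\tau}$-$H$-bi-Galois object whose left and right coactions are both given by the comultiplication $\Delta_H$.

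The heart of the argument is the identification $A\#_\tau U={}_{\hat\tau}H$ as algebras. Both have underlying vector space $A\ot U$; to compare the products one expands the right-hand side of \eqref{eqn:cocycle-def-galois} for $H$, keeping track of the fact that the comultiplication of $A^{\rm cop}\ot U$ sends $a\ot x$ to $(a_{(2)}\ot x_{(1)})\ot(a_{(1)}\ot x_{(2)})$. Substituting the explicit form of $\hat\tau$ and using the counit axioms to collapse the two legs on which $\hat\tau$ acts only through $\eps$, the product of $_{\hat\tau}H$ becomes $\tau(b_{(2)},x_{(1)})\,ab_{(1)}\ot x_{(2)}y$, which is exactly the product of $A\#_\tau U$ displayed in Subsection \ref{subsec:hopfhopf}. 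Since the $H$-comodule structure on $_{\hat\tau}H$ is $\Delta_H$ and this same map induces the stated coactions on $A\#_\tau U$, the two structures agree, not merely up to isomorphism.

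Combining the two observations yields the statement: $A\#_\tau U$ is a $D_\tau(A^{\rm cop},U)$-$(A^{\rm cop}\ot U)$-bi-Galois object with both coactions induced by the comultiplication of the tensor product coalgebra $A^{\rm cop}\ot U$. I do not expect any genuine obstacle here; the only point demanding care is the bookkeeping with the opposite comultiplication on the $A$-factor, which is precisely what makes the skew-pairing (rather than plain pairing) version of the Doi--Takeuchi cocycle the relevant one.
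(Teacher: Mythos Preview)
Your proposal is correct and follows exactly the approach the paper itself takes: the paper states just before the proposition that ``the above product coincides with the one on $_{\hat{\tau}}\!(A^{\rm cop}\otimes U)$'' and then invokes the cocycle constructions of Subsections \ref{subsec:cocycle} and \ref{subsec:pairing}, which is precisely what you do. Your write-up is in fact more explicit than the paper's, since you spell out the comultiplication of $A^{\rm cop}\otimes U$ and carry out the verification that the two products agree.
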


For the purpose of future generalizations, it is worth to record the equivalent skew-pairing version of the previous proposition. So again let $A$ and $U$ be Hopf algebras and let
$\tau : A \otimes U \longrightarrow k$ be a skew-pairing. The pairing then provides a map
\begin{align*}
U \otimes A & \longrightarrow A &
x\otimes a &\longmapsto x.a= \tau(a_{(1)},x)a_{(2)}
\end{align*}
that endows $A$ with the structure of a left $U$-module algebra structure. We thus form the smash product algebra $A\#U$, that we denote $A\#_\tau U$ to keep track of $\tau$ (viewing our skew-pairing as a pairing between $A^{\rm cop}$ and $U$, this is the previous $A^{\rm cop}\#_\tau U$), whose product is defined by
\[a\#x \cdot b\#y= a(x_{(1)}.b)\# x_{(2)}y= \tau(b_{(1)},x_{(1)}) ab_{(2)}\# x_{(2)}y.\] 
Again, the above product coincides with the one on  $_{\hat{\tau}}\!(A\otimes U)$ as in Subsection \ref{subsec:cocycle}, yielding, as before, the following result.

\begin{proposition}\label{prop:smashgaloishopfhopfskew}
	Let $A$ and $U$ be Hopf algebras and let $\tau : A \otimes U \longrightarrow k$ be a skew-pairing. Then the smash product algebra $A\#_\tau U$ is a $D_\tau(A,U)-(A\otimes U)$-bi-Galois object, whose left and right coactions are induced by the comultiplication of the tensor product coalgebra $A\otimes U$.
\end{proposition}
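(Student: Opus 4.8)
The plan is to deduce the statement from the Hopf $2$-cocycle constructions of \S\ref{subsec:cocycle} together with the Doi--Takeuchi $2$-cocycle recalled in \S\ref{subsec:pairing}, exactly in the way Proposition \ref{prop:smashgaloishopfhopf} was obtained. The two inputs are: the Doi--Takeuchi result \cite[Proposition 1.5]{dt94}, which for a skew-pairing $\tau\colon A\ot U\to k$ produces the $2$-cocycle $\hat\tau$ on the tensor product Hopf algebra $A\ot U$ given by $\hat\tau(a\ot x\ot b\ot y)=\tau(b,x)\eps(a)\eps(y)$; and the fact from \S\ref{subsec:cocycle} that, for any $2$-cocycle $\sigma$ on a Hopf algebra $H$, the algebra $_{\sigma}\!H$ with comodule structure induced by $\Delta_H$ is an $H^\sigma$--$H$-bi-Galois object.

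The first step is a routine comparison of products. On one side, the skew-pairing smash product has multiplication $(a\#x)(b\#y)=\tau(b_{(1)},x_{(1)})\,ab_{(2)}\#x_{(2)}y$. On the other side, using $\Delta_{A\ot U}(a\ot x)=a_{(1)}\ot x_{(1)}\ot a_{(2)}\ot x_{(2)}$, formula \eqref{eqn:cocycle-def-galois} applied to $\hat\tau$ gives on $_{\hat\tau}(A\ot U)$ the product $(a\ot x)\cdot(b\ot y)=\hat\tau(a_{(1)}\ot x_{(1)},\,b_{(1)}\ot y_{(1)})\,a_{(2)}b_{(2)}\ot x_{(2)}y_{(2)}=\tau(b_{(1)},x_{(1)})\,ab_{(2)}\ot x_{(2)}y$. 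Hence, under the tautological identification of underlying vector spaces, $A\#_\tau U={}_{\hat\tau}(A\ot U)$ as algebras. Then \S\ref{subsec:cocycle} gives at once that $_{\hat\tau}(A\ot U)$, with the comodule structures induced by $\Delta_{A\ot U}$, is an $(A\ot U)^{\hat\tau}$--$(A\ot U)$-bi-Galois object, and by the definition recalled in \S\ref{subsec:pairing} one has $(A\ot U)^{\hat\tau}=D_\tau(A,U)$, which is the assertion, with the stated left and right coactions.

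An equivalent, purely formal route is to note that a skew-pairing $\tau\colon A\ot U\to k$ is by definition a pairing $A^{\mathrm{cop}}\ot U\to k$, that $A$ and $A^{\mathrm{cop}}$ carry the same multiplication, and that $\Delta_{A^{\mathrm{cop}}}(a)=a_{(2)}\ot a_{(1)}$; with this the action $x.a=\tau(a_{(1)},x)a_{(2)}$ is precisely the module-algebra structure of \S\ref{subsec:hopfhopf} attached to the pairing $A^{\mathrm{cop}}\ot U\to k$, so that $A\#_\tau U$ coincides, as algebra and as comodule algebra, with the object furnished by Proposition \ref{prop:smashgaloishopfhopf} applied with $A^{\mathrm{cop}}$ in the role of ``$A$''; that proposition then yields a $D_\tau((A^{\mathrm{cop}})^{\mathrm{cop}},U)$--$((A^{\mathrm{cop}})^{\mathrm{cop}}\ot U)$-bi-Galois object, and $(A^{\mathrm{cop}})^{\mathrm{cop}}=A$ finishes the argument. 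In either approach there is no genuine difficulty; the one point demanding care is the bookkeeping with opposite coalgebras and Sweedler indices, so that the ambient tensor-product coalgebra over which $A\#_\tau U$ becomes a comodule algebra comes out to be $A\ot U$ rather than $A^{\mathrm{cop}}\ot U$, and so that the module-action formulas match on the nose.
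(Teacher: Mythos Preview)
Your proof is correct and follows essentially the same route as the paper: identify the smash product $A\#_\tau U$ with ${}_{\hat\tau}(A\otimes U)$ by a direct comparison of products, then invoke the $2$-cocycle constructions of \S\ref{subsec:cocycle} and the definition $D_\tau(A,U)=(A\otimes U)^{\hat\tau}$ from \S\ref{subsec:pairing}. The alternative derivation via Proposition~\ref{prop:smashgaloishopfhopf} applied to $A^{\mathrm{cop}}$ is also the reformulation the paper alludes to just before the statement.
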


\subsection{Application to differential operators on affine group schemes}\label{diff}
The first Weyl algebra $A_1(k)= k\langle x,y \ | \ xy-yx=1\rangle $ can be described as the smash product $k[y]\#k[x]$, where $k[x]$ and $k[y]$ have the Hopf algebra structure making $x$ and $y$ primitive, and the smash product structure is obtained as in Subsection \ref{subsec:hopfhopf} via the pairing $\tau : k[x] \otimes k[y] \to k$ defined by $\tau(x,y)=1$.

The structure of Hopf-Galois object over $k[x,y]$ of $A_1(k)$  is a particular instance of Proposition \ref{prop:smashgaloishopfhopf}. The Weyl algebra is, in characteristic zero, the algebra of differential operators on the additive algebraic group $k$. This leads to the question whether algebras of differential operators on algebraic groups always are Hopf-Galois objects. As we shall see soon, a classical result of Heyneman-Sweedler \cite{hs} together with Proposition \ref{prop:smashgaloishopfhopf} provide a positive answer.

Let $G$ be an affine group scheme, with corresponding commutative  and finitely generated coordinate Hopf algebra $\Oo(G)$. The algebra of differential operators on $G$, denoted ${\rm Diff}(G)$, is defined as the algebra of differential operators on the commutative algebra $\Oo(G)$, in the usual manner, see e.g. \cite{McR}. Denote by $\mathcal D(G)$ the algebra of distributions on $G$ (this is $\mathcal D(\Oo(G))$ in \cite{hs}, see \cite{dg,jan} as well). This is a Hopf subalgebra of the Hopf dual $\Oo(G)^{\circ}$, isomorphic, in characteristic zero, with $U(\mathfrak g)$, the enveloping algebra of $\mathfrak g$, the Lie algebra of $G$. Now \cite[Theorem 2.4.5]{hs} establishes an algebra isomorphism 
\[ \Oo(G) \mathcal \#\mathcal D(G) \simeq {\rm Diff}(G) \]
where the smash product is associated to the pairing coming from the inclusion $\mathcal D(G) \subset \Oo(G)^\circ$. Combining Proposition \ref{prop:smashgaloishopfhopf} with the above isomorphism, we get:

\begin{proposition}\label{prop:diffG}
	Let $G$ be an affine group scheme. Then ${\rm Diff}(G)$ is a right Hopf-Galois object over the Hopf algebra $\Oo(G)^{\rm cop}  \otimes \mathcal D(G)$.
\end{proposition}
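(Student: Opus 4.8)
The plan is to combine the Heyneman-Sweedler isomorphism with Proposition \ref{prop:smashgaloishopfhopf} directly. First I would recall that by \cite[Theorem 2.4.5]{hs} we have an algebra isomorphism ${\rm Diff}(G) \simeq \Oo(G)\#\mathcal D(G)$, where the smash product structure comes from the pairing $\tau : \Oo(G)\otimes \mathcal D(G)\to k$ induced by the inclusion $\mathcal D(G)\subset \Oo(G)^{\circ}$. Since both $\Oo(G)$ and $\mathcal D(G)$ are Hopf algebras, this is exactly the setting of Subsection \ref{subsec:hopfhopf}, so the smash product is $\Oo(G)\#_\tau \mathcal D(G)$ in the notation there.

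Next I would invoke Proposition \ref{prop:smashgaloishopfhopf} with $A=\Oo(G)$ and $U=\mathcal D(G)$. That proposition asserts that $\Oo(G)\#_\tau \mathcal D(G)$ is a $D_\tau(\Oo(G)^{\rm cop},\mathcal D(G))$-$(\Oo(G)^{\rm cop}\otimes \mathcal D(G))$-bi-Galois object, with the right coaction induced by the comultiplication of the tensor product coalgebra $\Oo(G)^{\rm cop}\otimes \mathcal D(G)$. In particular it is a right Hopf-Galois object over $\Oo(G)^{\rm cop}\otimes \mathcal D(G)$. Transporting this structure along the Heyneman-Sweedler isomorphism yields that ${\rm Diff}(G)$ is a right Hopf-Galois object over $\Oo(G)^{\rm cop}\otimes \mathcal D(G)$, which is the claim.

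There is essentially no obstacle here: the content is entirely in the two cited results, and the proof is just a matter of checking that the hypotheses of Proposition \ref{prop:smashgaloishopfhopf} are met --- namely that $\mathcal D(G)$ is indeed a Hopf algebra (it is a Hopf subalgebra of $\Oo(G)^{\circ}$) and that the module algebra structure on $\Oo(G)$ used by Heyneman-Sweedler is precisely the one coming from the pairing $\tau$ as in Subsection \ref{subsec:hopfhopf}. The only mild subtlety worth a remark is the appearance of the cop: the Hopf-Galois structure is over $\Oo(G)^{\rm cop}\otimes \mathcal D(G)$ rather than $\Oo(G)\otimes \mathcal D(G)$, which reflects the fact that in Subsection \ref{subsec:hopfhopf} the $2$-cocycle $\hat{\tau}$ lives on $A^{\rm cop}\otimes U$; this is simply read off from the definition of the generalized Drinfeld double and does not require any further argument.
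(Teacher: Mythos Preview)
Your proof is correct and follows exactly the same approach as the paper: the paper simply states that the result follows by combining the Heyneman-Sweedler isomorphism $\Oo(G)\#\mathcal D(G)\simeq {\rm Diff}(G)$ with Proposition~\ref{prop:smashgaloishopfhopf}, and your write-up spells out precisely this combination (with a bit more detail on why the hypotheses are met and why the $\rm cop$ appears).
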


\section{Twisting maps and bicrossed products of Hopf algebras}\label{sec:twistings}

In this section we review the notions of a twisted tensor product of algebras and of a bicrossed product of Hopf algebras, and we provide the first generalization of Proposition \ref{prop:smashgaloishopfhopfskew}.

\subsection{Twisting maps} \label{sub:twist} Let $A$, $R$ be algebras. Recall that a \textsl{twisting map} for $A$, $R$ consists of a linear map
$$\theta : R\otimes A \to A \otimes R$$
such that the map
$$A\otimes R \otimes A \otimes R \xrightarrow{\ {\rm id}_A\otimes \theta \otimes {\rm id}_R \ } A\otimes A \otimes R \otimes R 
\xrightarrow{\ m_A\otimes m_R\ } A \otimes R$$
makes $A\otimes R$ into an associative algebra, with $1\otimes 1$ as a unit. The resulting algebra can be denoted $A\otimes_\theta R$ or  $A\#_\theta R$, and is called a \textsl{twisted tensor product of $A$ and $R$}. This construction has been studied in many papers, see \cite{vdvk,caen} for example, where the following equivalent conditions are established, for $a,a'\in A$, $r,r'\in R$:
\begin{align}\label{eqn:twistconditions}
\begin{split}
\theta\circ(m_R\ot\id)(r\ot r'\ot a)&=(\id\ot \,m_R)\circ(\theta\ot\id)\circ(\id\ot\, \theta)(r\ot r'\ot a),\\
\theta\circ(\id\ot\, m_A)(r\ot a\ot a')&=(m_A\ot \id)\circ(\id\ot\,\theta)\circ (\theta\ot \id)(r\ot a'\ot a),\\
\theta(1\otimes a)&=a\ot 1, \qquad \theta(r\otimes 1)=1\ot r.
\end{split}
\end{align}
\begin{example}
	The very first non-trivial example of a twisted tensor product comes from smash products: if $U$ is a Hopf algebra and $A$ is a $U$-module algebra, then the map
	\begin{align*}
	U \otimes A &\longrightarrow A \otimes U &
	x \otimes a & \longmapsto x_{(1)}.a \otimes x_{(2)}
	\end{align*}
	is a twisting map, and the resulting twisted tensor product is the smash product algebra of Subsection \ref{sub:smash}.
\end{example}

The twisted tensor product generalizes the braided tensor product of algebras (typically the braided tensor product of Yetter-Drinfeld algebras) as well. If we write
\[
\theta(r\ot a)=r\hr a\ot r\hl a
\]
then the above conditions become $1\hr a\ot 1\hl a=a\ot 1, r\hr 1\ot r\hl 1=1\ot r$ and
\begin{align}\label{eqn:theta-explicit}
\begin{split}
(rr'\hr a)\ot (rr'\hl a)&=  r\hr (r'\hr a)\ot (r\hl (r'\hr a))(r'\hl a),\\
(r\hr aa')\ot (r\hl aa')&=( r\hr a)( (r\hl a)\hr a')\ot (r\hl a)\hl a',
\end{split}
\end{align}
for any $r,r'\in R$, $a,a'\in A$. These equations will become handy further on.

Conversely, starting with an algebra $E$ having two subalgebras such that the restricted multiplication $A \otimes R \to E$ is bijective, then there exits a twisting map $\theta : R\otimes A \to A \otimes R$ such that $E\simeq A\#_\theta R$, where the twisting map is obtained as the composition of the restricted multiplication $R \otimes A \to J$ with the flip map. See \cite[Theorem 2.10]{caen} for example.

It is in general a difficult task to classify the twisted tensor product of two algebras, see e.g. \cite{aggv} for a recent paper on this question.

\subsection{Bicrossed products of Hopf algebras}\label{subsec:bicrossed} 
 Recall that a Hopf algebra $E$ factors through two Hopf subalgebras $\iota\colon H\hookrightarrow E$ and $j:U\hookrightarrow E$ if the multiplication $m\circ (\iota\ot j)\colon H\ot U\stackrel{\simeq }{\longrightarrow} E$ is bijective. This is equivalent, see \cite[Theorem 7.2.3]{maj}, to have a pair $(H,U)$ of Hopf algebras together with coalgebra maps 
\[
\tl\colon U\ot H\to U, \qquad \tr\colon U\ot H\to H,
\]
so that $(U,\triangleleft)$ is a left $H$-module coalgebra, $(H,\triangleright)$ is a right $U$-module coalgebra and the following compatibilities hold, for every $x,y\in U$, $a,b\in H$:
\begin{align}
	\begin{split}\label{eqn:compatibilities-matchedpair}
x\tr 1_H&=\eps_U(x)1_H, \qquad 1_U\tl a=\eps_H(a)1_U,\\
x\tr(ab)&=(x_{(1)}\tr a_{(1)})((x_{(2)}\tl a_{(2)})\tr b),\\
(yx)\tl a&=(y\tl (x_{(1)}\tr a_{(1)}))(x_{(2)}\tl a_{(2)}),\\
y_{(1)}\tl a_{(1)}\ot y_{(2)}\tr a_{(2)}&= y_{(2)}\tl a_{(2)}\ot y_{(1)}\tr a_{(1)}.
	\end{split}
\end{align}

A quadruple $(H,U,\triangleright, \triangleleft)$ as above is called a \textsl{matched pair of Hopf algebras} and  the \textsl{bicrossed product Hopf algebra} $H\tb U\coloneqq E$, often called a double crossed (as pointed out in \cite{ago1}, the name bicrossed product fits better with the bicrossed product of groups), is the coalgebra $H\ot U$ with multiplication
\[
(a\tb x)(b\tb y)=a(x_{(1)}\tr b_{(1)})\tb (x_{(2)}\tl b_{(2)})y,  \qquad a,b\in H, x,y\in U.
\]
In particular, this defines a twisting map and a coalgebra morphism:
\begin{align}\label{eqn:twisting-bicrossed}
	\omega_{\tb}\colon U\ot H\to H\ot U, \qquad x\ot h\mapsto x_{(1)}\tr h_{(1)}\ot x_{(2)}\tl h_{(2)}.
\end{align}

We say that one of the actions, say $\tl$, is trivial, if $x\tl a=\eps(a)x$ for all $x\in U,a\in H$. 

\medbreak

\begin{example}\label{ex:basicbicrossed}
Examples of this construction are:
\begin{enumerate}
	\item The tensor product Hopf algebra $H\ot U$, where both actions $\tl$ and $\tr$ are trivial.
	\item Smash products of Hopf algebras $H\# U$, where one of the actions, namely $\tl\colon U\ot H\to U$ is trivial. We will come back to this situation in more detail in Section \ref{sec:semi}.
	\item Group algebras $k\Gamma$, arising from a matched pairs of groups $(F,G,\triangleright,\triangleleft)$, so that $G,F\leq \Gamma$, $G\cap F=\{e\}$ and $\Gamma=GF$.
	\item Bicrossed products $U(\Xi)\simeq U(\g)\tb U(\m)$ associated to a matched pair of Lie algebras $(\g,\m)$ as in \cite[Definition 8.3.1]{maj}. Here $\Xi=\g\oplus \m$ as vector spaces and the bracket $[\,,]_\Xi$ is written in terms of $[\,,]_\g, [\,,]_\m$ and the actions defining the matched pair.
	\item The generalized Drinfeld doubles $D_\tau(A,U)=(A\otimes U)^{\hat{\tau}}$  associated to skew pairings $\tau\colon A\ot U\to k$ as in \S \ref{subsec:pairing}.
	Here, $D_\tau(A,U)\simeq A\tb U$ with 
	\begin{align*}
		(1\tb x)(a\tb 1)=\tau(a_{(1)},x_{(1)})a_{(2)}x_{(2)}\tau^{-1}(a_{3},x_{(3)}), \ x\in U, a\in A.
	\end{align*}
	In this setting we write $A\tb_\tau U$ to remark the existence of this pairing.
\end{enumerate} 
\end{example}

We have not been able to found in the literature an example not belonging to any of these groups. In particular, we observe that in the ongoing program of classifying bicrossed products of Hopf algebras by Agore et.~al., see \cite{ago1,ago2,ago3}, all the examples fall into one of the categories above (most notably into the fourth). The same holds for \cite{bont} and \cite{lu}. The Galois objects in this setting have been described in \cite{sctaft}. We present in Section \ref{sec:exbicrossed} below a case essentially different to those above, and that corresponds to the scope of the present article.

First, we remark that when the antipodes $S_H$ and $S_U$ of $H$ and $U$ are invertible (or equivalently when $S_E$ is), then this construction can be flipped over, in the sense that the roles of $H$ and $U$ can be interchanged. This is the content of the next result.


\begin{lemma}\label{lem:reverse}
	Let $(H,U,\triangleright, \triangleleft)$ be a matched pair of Hopf algebras. Assume that the antipodes $S_H$ and $S_U$ are bijective. Then there are  coalgebra maps 
	\[
	\ttl\colon H\ot U\to H, \qquad \ttr\colon H\ot U\to U
	\]
	so that $(H,\ttl)$ is a right $U$-module coalgebra, $(U,\ttr)$ is a left $H$-module coalgebra and the  compatibilities \eqref{eqn:compatibilities-matchedpair} hold for $\ttl$ and $\ttr$. More explicitly,
	\begin{align*}
		a\ttr x&=S_U(S_U^{-1}(x)\tl S_H^{-1}(a)), & a\ttl x&=S_H(S_U^{-1}(x)\tr S_H^{-1}(a)).
	\end{align*}
The bicrossed product Hopf algebras coincide, as $H\tb U\simeq U\ttb H$ and the following identities hold:
	\begin{align}
		\begin{split}\label{eqn:reverse-match}
		ax&=[(a_{(1)}\ttr x_{(1)})\tr (a_{(3)}\ttl x_{(3)})][(a_{(2)}\ttr x_{(2)})\tr (a_{(4)}\ttl x_{(4)})],\\
		xa&=[(x_{(1)}\tr a_{(1)})\ttr (x_{(3)}\tl a_{(3)})][(x_{(2)}\tr a_{(2)})\ttl (x_{(4)}\tl a_{(4)})].
	\end{split}
	\end{align}
\end{lemma}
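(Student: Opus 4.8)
The plan is to obtain the flipped matched pair directly from the fact that $E = H\tb U$ factors the other way, namely through $U$ first and then $H$, once we know the antipode $S_E$ is bijective. First I would verify that $S_E$ is bijective: on the coalgebra $H\otimes U$ the antipode of $E$ is $S_E(a\tb x) = S_U(x)\cdot S_H(a)$ (a short computation with the bicrossed product multiplication, or a citation to \cite{maj}), and since $S_H$ and $S_U$ are bijective and multiplication $H\otimes U\to E$ is bijective, $S_E$ is bijective too (its inverse is $a\tb x\mapsto S_H^{-1}(a)\cdot S_U^{-1}(x)$ up to the flip). With $S_E$ bijective, $E^{\cop}$ is again a Hopf algebra with bijective antipode, and $E^{\cop}$ factors through the Hopf subalgebras $U^{\cop}$ and $H^{\cop}$ via the flipped multiplication; applying the Majid correspondence \cite[Theorem 7.2.3]{maj} to $E^{\cop} = U^{\cop}\tb H^{\cop}$ produces a matched pair $(U,H,\ttr,\ttl)$ of the stated type. (One small point: passing through $\cop$ requires one to check that the module-coalgebra and compatibility axioms \eqref{eqn:compatibilities-matchedpair} are self-dual in the appropriate sense so that a matched pair for $E^{\cop}$ gives a matched pair for $E$ with the roles of $H,U$ swapped; this is a routine unwinding of the definitions.)

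Next I would pin down the explicit formulas. Inside $E$, the defining property of $\ttl,\ttr$ is that $x\cdot a = (a\ttr x)(a\ttl x)$, i.e. $m\circ(\ttr\otimes\ttl)\circ(\mathrm{flip}\text{-type map})$ recovers the product $U\otimes H\to E$, dually to how $\tr,\tl$ recover $H\otimes U\to E$. To extract $a\ttr x$ and $a\ttl x$ individually one applies $(\mathrm{id}\otimes\varepsilon_U)$ and $(\varepsilon_H\otimes\mathrm{id})$ after suitably multiplying by antipodes. Concretely: from $x a = (a_{(1)}\ttr x_{(1)})\tb(a_{(2)}\ttl x_{(2)})$ in $E$ — reading $E = U\tb H$ coordinates — one solves for the two components by the standard trick of multiplying on the appropriate side by $S(a)$ or $S(x)$ and collapsing with counits. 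This is exactly how Majid derives his formulas; doing it here, using that in $E$ we have $a\tb x = (1\tb x)\cdot$ something is circular, so instead one uses $x\cdot S_H^{-1}(a) \cdot (\text{stuff})$ and the known product $ax = a(x_{(1)}\tr b\ldots)$ — after the dust settles one gets $a\ttr x = S_U(S_U^{-1}(x)\tl S_H^{-1}(a))$ and $a\ttl x = S_H(S_U^{-1}(x)\tr S_H^{-1}(a))$, matching the statement. I would double-check these by verifying the unit/counit normalizations ($1_H\ttr x = \varepsilon_U(x)1_U$, etc.) fall out of $x\tr 1 = \varepsilon(x)1$ and $S$-properties.

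For the identities \eqref{eqn:reverse-match}: these just say that the two factorizations of $E$ are compatible, i.e. writing an element of $E$ first in $H\tb U$ form and re-expanding using $U\ttb H$, or vice versa, gives back the original. I would prove $xa = [(x_{(1)}\tr a_{(1)})\ttr(x_{(3)}\tl a_{(3)})]\tb[(x_{(2)}\tr a_{(2)})\ttl(x_{(4)}\tl a_{(4)})]$ by starting from $xa$ computed in $E = H\tb U$ form, which gives $\omega_{\tb}(x\otimes a) = x_{(1)}\tr a_{(1)}\tb x_{(2)}\tl a_{(2)}$, i.e. $xa = (x_{(1)}\tr a_{(1)})(x_{(2)}\tl a_{(2)})$ as a product in $E$ with first factor in $H$, second in $U$; then applying the reverse factorization $U\ttb H$ to the $U$-then-$H$ reordering of that same product — more precisely, one rewrites the product $h\cdot x'$ (with $h = x_{(1)}\tr a_{(1)}\in H$, $x' = x_{(2)}\tl a_{(2)}\in U$) using $x'$-on-the-left is wrong, so instead one directly expands $(x_{(1)}\tr a_{(1)})\cdot(x_{(2)}\tl a_{(2)})$ where the left factor lies in $H$ and the right in $U$, and this is already in $H\tb U$ normal form, so the identity to prove is really the statement that re-expressing the $H$-factor via $\ttr,\ttl$ and multiplying out reproduces it; concretely it follows from applying $m_E\circ(\mathrm{id}\otimes m_E)$ to the coassociativity-expanded version and using the defining relations of $\ttr,\ttl$ together with the module-coalgebra axioms. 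The second identity in \eqref{eqn:reverse-match} is the mirror statement and follows by the same argument with the roles of the two factorizations swapped (or by applying the first to $E^{\cop}$).

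The main obstacle I anticipate is bookkeeping rather than conceptual: keeping the two module structures, their compatibility relations, and the four-fold Sweedler indices straight while verifying \eqref{eqn:reverse-match}, and making sure the $\cop$-dualization is applied consistently so that "left module coalgebra" becomes "right module coalgebra" with the correct side conventions. The existence and explicit form of $\ttl,\ttr$ themselves are essentially forced by Majid's theorem applied to $E^{\cop}$, so the only real work is the verification of the explicit formulas and of \eqref{eqn:reverse-match} by direct computation in $E$, using associativity of $m_E$ and the relations \eqref{eqn:compatibilities-matchedpair}.
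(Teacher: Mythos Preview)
Your strategy—establish the reverse factorization $m\colon U\otimes H\to E$ via bijectivity of $S_E$, then invoke Majid's theorem—is valid but differs from the paper's more direct route, and the $E^{\cop}$ detour is unnecessary. The paper simply \emph{takes} the stated formulas for $\ttr,\ttl$ as definitions and verifies the key identity $ax=(a_{(1)}\ttr x_{(1)})(a_{(2)}\ttl x_{(2)})$ in one line: with $b=S_H^{-1}(a)$, $y=S_U^{-1}(x)$,
\[
ax=S_H(b)S_U(y)=S_E(yb)=S_E\big((y_{(1)}\tr b_{(1)})(y_{(2)}\tl b_{(2)})\big)=S_U(y_{(2)}\tl b_{(2)})\,S_H(y_{(1)}\tr b_{(1)}),
\]
which is exactly $(a_{(1)}\ttr x_{(1)})(a_{(2)}\ttl x_{(2)})$. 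This single antipode computation simultaneously exhibits the reverse factorization $E\simeq U\ttb H$ and pins down the explicit formulas, bypassing both your $E^{\cop}$ passage and the ``after the dust settles'' step where your derivation is vague. The module-coalgebra and compatibility axioms for $\ttr,\ttl$ then follow formally from those for $\tr,\tl$ through the antipodes.

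For \eqref{eqn:reverse-match} you are overthinking, and your paragraph shows you talking yourself out of the right move. Once $ax=(a_{(1)}\ttr x_{(1)})(a_{(2)}\ttl x_{(2)})$ is known—a product of a $U$-element by an $H$-element—the first identity is obtained by rewriting \emph{that} product in $H\tb U$ normal form, i.e.\ one application of the original $(\tr,\tl)$. Symmetrically, $xa=(x_{(1)}\tr a_{(1)})(x_{(2)}\tl a_{(2)})$ is a product $H\cdot U$, and rewriting it in $U\ttb H$ normal form via $(\ttr,\ttl)$ gives the second identity. Your confusion (``$x'$-on-the-left is wrong'') comes from not recognizing that $h\cdot x'$ with $h\in H$, $x'\in U$ is exactly where the new factorization applies: $hx'=(h_{(1)}\ttr x'_{(1)})(h_{(2)}\ttl x'_{(2)})$; substituting $h=x_{(1)}\tr a_{(1)}$, $x'=x_{(2)}\tl a_{(2)}$ and using that $\tr,\tl$ are coalgebra maps produces the four-index formula in one step. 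No further use of associativity or of \eqref{eqn:compatibilities-matchedpair} is needed.
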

\pf
It is clear that $(H,\ttl)$ is a right $U$-module coalgebra, $(U,\ttr)$ is a left $H$-module coalgebra and the compatibilities \eqref{eqn:compatibilities-matchedpair} hold, since these facts follow from the corresponding properties of $\tl$ and $\tr$. As well, observe that, for $b=S_H^{-1}(a)$ and $y=S_U^{-1}(x)$, we have:
\begin{align*}
ax&=S_H(b)S_U(y)=S_E(yb)=S_E((y_1\tr b_1)(y_2\tl b_2))=S_U(y_2\tl b_2)S_H((y_1\tr b_1))\\
&=S_U(S_U^{-1}(x_{(1)})\tl S_H^{-1}(a_{(1)}))S_H(S_U^{-1}(x_{(2)})\tr S_H^{-1}(a_{(2)}))=(a_{(1)}\ttr x_{(1)})(a_{(2)}\ttl x_{(2)}),
\end{align*}
which shows $H\tb U \simeq  U\ttb H$. Finally, to check \eqref{eqn:reverse-match}, we use the definition of $H\tb U$ in:
\begin{align*}
ax&=(a_{(1)}\ttr x_{(1)})(a_{(2)}\ttl x_{(2)})=[(a_{(1)}\ttr x_{(1)})\tr (a_{(3)}\ttl x_{(3)})][(a_{(2)}\ttr x_{(2)})\tl (a_{(4)}\ttl x_{(4)})].
\end{align*}
The other identity follows from the definition of $H\ttb U$:
\begin{align*}
xa&=(x_{(1)}\tr a_{(1)})(x_{(2)}\tl a_{(2)})=[(x_{(1)}\tr a_{(1)})\ttr (x_{(3)}\tl a_{(3)})][(x_{(2)}\tr a_{(2)})\ttl (x_{(4)}\tl a_{(4)})].
\end{align*}
The lemma follows.
\epf

\subsection{Skew-pairings over bicrossed products} We now provide a generalization of skew pairing to the setting of bicrossed products. 

\begin{definition}\label{def:skew-pairing}
Let $(H,U,\tr,\tl)$ be a matched pair of Hopf algebras and let $E\coloneqq H\tb U$ be the corresponding bicrossed product. A skew $H\tb U$-pairing is a convolution invertible linear map $\tau : H \otimes U \to k$ 
such that,  for all $g,h\in H$, $x,y\in U$:
\begin{align}
\begin{split}\label{eqn:tauskew}
\tau(h,1)=\varepsilon(h),  \ & \tau(1,x)= \varepsilon(x),\\
\tau(h,xy)=\tau(h_{(1)},y_{(1)})\tau(y_{(2)}\tr h_{(2)},x), \ &
\tau(gh,x)=\tau(g_{(1)},x_{(1)})\tau(h,x_{(2)}\tl g_{(2)}).
\end{split}
\end{align}
\end{definition}

In particular, any skew $H\tb U$-pairing is determined by its values on the generators and is extended using \eqref{eqn:tauskew} via the rule, for $g,h\in H$, $x,y\in U$:
\begin{multline}
\label{eqn:tau-extension}
		\tau(gh,xy)
=\tau(g_{(1)},y_{(1)})\tau(h_{(1)},y_{(2)}\tl g_{(2)})\tau(y_{(3)}\tr g_{(3)},x_{(1)})   \\ \tau((y_{(5)}\tl g_{(5)})\tr h_{(2)},x_{(2)}\tl (y_{(4)}\tr g_{(4)})).
\end{multline}

Of course we can interpret the usual skew-pairing as in Section \ref{sec:hhp} as skew $H\otimes U$-pairings. Condition \ref{eqn:tauskew} appears in \cite[Definition 4.1]{bicar}. The definition is motivated by the following lemma.

\begin{lemma}\label{lem:tauskewtwisting}
	Let $(H,U,\tr,\tl)$ be a matched pair of Hopf algebras and let $\tau : H \otimes U \to k$  be a linear map. Consider the map
	\begin{align}\label{eqn:theta-taui}
		\theta_\tau : U \otimes H & \longrightarrow H \otimes U & x \otimes h & \longmapsto \tau(h_{(1)},x_{(1)})\, x_{(2)}\tr h_{(2)}\ot x_{(3)}\tl h_{(3)}.
\end{align}
	Then $\theta_\tau$ is a twisting map if and only if $\tau$ satisfies Condition (\ref{eqn:tauskew}).
\end{lemma}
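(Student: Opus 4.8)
The plan is to verify directly that the three conditions in \eqref{eqn:twistconditions} for $\theta_\tau$ to be a twisting map correspond, one by one, to the conditions in \eqref{eqn:tauskew}. The unit conditions are the easy part: $\theta_\tau(1_U\ot h)=\tau(h_{(1)},1)\,1_U\tr h_{(2)}\ot 1_U\tl h_{(3)} = \tau(h_{(1)},1)h_{(2)}\ot 1_U$, which equals $h\ot 1_U$ iff $\tau(h,1)=\eps(h)$, using $1_U\tr h=h$ and $1_U\tl h=\eps(h)1_U$; dually, $\theta_\tau(x\ot 1_H)=1_H\ot x$ iff $\tau(1,x)=\eps(x)$. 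So I would dispatch these first.

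The main work is the two associativity-type identities. First I would unwind the left one, $\theta_\tau\circ(m_U\ot\id)(x\ot y\ot h)=(\id\ot m_U)\circ(\theta_\tau\ot\id)\circ(\id\ot\theta_\tau)(x\ot y\ot h)$. The left-hand side is $\theta_\tau(xy\ot h)=\tau(h_{(1)},(xy)_{(1)})\,(xy)_{(2)}\tr h_{(2)}\ot (xy)_{(3)}\tl h_{(3)}$; since $\tl,\tr$ are coalgebra maps and using the module-coalgebra and compatibility relations \eqref{eqn:compatibilities-matchedpair} one rewrites $(xy)\tr h$ and $(xy)\tl h$ in nested form. The right-hand side involves applying $\theta_\tau$ to $y\ot h$ first, then $\theta_\tau$ again to $x\ot(y\tr h)$, then multiplying the two $U$-components. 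After matching Sweedler indices — keeping careful track of the fact that $x_{(2)}\tl h$-type terms reappear inside the second application of $\theta_\tau$ — the combinatorial skeleton of both sides agrees, and the condition reduces exactly to $\tau(h,xy)=\tau(h_{(1)},y_{(1)})\tau(y_{(2)}\tr h_{(2)},x)$ applied in the appropriate Sweedler components. Symmetrically, the right identity $\theta_\tau\circ(\id\ot m_H)(x\ot h\ot g) = (m_H\ot\id)\circ(\id\ot\theta_\tau)\circ(\theta_\tau\ot\id)(x\ot h\ot g)$ unwinds, via $x\tr(hg)=(x_{(1)}\tr h_{(1)})((x_{(2)}\tl h_{(2)})\tr g)$ and $(yx)\tl a$-type relations, to the condition $\tau(gh,x)=\tau(g_{(1)},x_{(1)})\tau(h,x_{(2)}\tl g_{(2)})$. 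For the converse direction one simply reads these equalities backwards: assuming \eqref{eqn:tauskew}, the same Sweedler manipulations show the two sides of each condition in \eqref{eqn:twistconditions} coincide.

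The step I expect to be the main obstacle is bookkeeping in the "$\Leftarrow$" direction of the left identity: one must reorganize the nested expression $x\tr(y\tr h)\ot\big(x\tl(y\tr h)\big)(y\tl h)$ coming from iterating $\theta_\tau$ and recognize it, after inserting the scalar $\tau$-factors, as $\theta_\tau(xy\ot h)$; this uses the module-coalgebra axioms and the third and fourth relations of \eqref{eqn:compatibilities-matchedpair} to merge $(xy)\tl h$ into the iterated form, and the indices do not line up in an obviously canonical way. A clean way to organize this is to first prove the identity with $\tau$ replaced by $\eps_H\eps_U$ (i.e.\ show $\omega_{\tb}$ from \eqref{eqn:twisting-bicrossed} is a twisting map, which is automatic since $H\tb U$ is an algebra), and then observe that inserting the scalar cocycle-type factors $\tau(-,-)$ multiplies both sides of each equation by the \emph{same} scalar precisely when \eqref{eqn:tauskew} holds; this reduces the genuinely new content to the scalar identities and lets one quote the associativity of $H\tb U$ for the coalgebraic skeleton. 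I would present the argument in that order: unit axioms, then the "skeleton from $\omega_{\tb}$" observation, then matching the scalar factors against \eqref{eqn:tauskew} for each of the two remaining conditions, in both directions.
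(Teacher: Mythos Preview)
Your proposal is correct and follows essentially the same route as the paper: compute both sides of each identity in \eqref{eqn:twistconditions}, use the matched-pair axioms \eqref{eqn:compatibilities-matchedpair} to see that the ``coalgebraic skeleton'' (your $\omega_{\tb}$ part) agrees, and then read off that equality holds precisely when the corresponding scalar identity in \eqref{eqn:tauskew} holds (the $\Rightarrow$ direction being obtained by applying $\eps_H\otimes\eps_U$). The paper carries out the first associativity identity explicitly and leaves the second and the unit conditions to the reader, exactly as you outline.
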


\begin{proof}
We have, for $x, y \in U$, $h\in H$:
\begin{align*}
\theta_\tau(xy,h) = \tau(h_{(1)},x_{(1)} y_{(1)})\, x_{(2)}y_{(2)}\tr h_{(2)}\ot x_{(3)}y_{(3)}\tl h_{(3)}.
\end{align*}
On the other hand, we have, using the axioms of a matched pair
\begin{align*}
(\id_H\otimes m_U)&\circ (\theta_\tau \otimes \id_U) \circ (\id_U\otimes \theta_\tau)(x\otimes y\otimes h) \\ &= \tau(h_{(1)},y_{(1)})
(\id_H\otimes m_U)\circ (\theta_\tau \otimes \id_U) ( x\otimes y_{(2)}\tr h_{(2)}\ot y_{(3)}\tl h_{(3)}) \\
&= \tau(h_{(1)},y_{(1)}) \tau(y_{(2)} \tr h_{(2)},x_{(1)} )
 x_{(2)}\tr(y_{(3)}\tr h_{(3)})\otimes [x_{(3) } \tl (y_{(4)}\tr h_{(4)})]\cdot y_{(5)}\tl h_{(5)}\\
  &= \tau(h_{(1)},y_{(1)}) \tau(y_{(2)} \tr h_{(2)},x_{(1)} )
 x_{(2)}\tr(y_{(3)}\tr h_{(3)})\otimes x_{(3)} y_{(4)} \tl h_{(4)}\\
 &= \tau(h_{(1)},y_{(1)}) \tau(y_{(2)} \tr h_{(2)},x_{(1)} )
 x_{(2)}y_{(3)}\tr h_{(3)}\otimes x_{(3)} y_{(4)} \tl h_{(4)},
\end{align*}
and this shows that the equivalence of the first  axiom for a twisting map is equivalent to $\tau(h,xy) = \tau(h_{(1)},y_{(1)})\tau(y_{(2)}\tr h_{(2)},x)$. The rest of the proof is similar and straightforward, and is left to the reader.
	\end{proof}

\begin{lemma}\label{lem:tauskewcocycle}
Let $(H,U,\tr,\tl)$ be a matched pair of Hopf algebras and let $\tau : H \otimes U \to k$  be a skew $H\tb U$-pairing. Then
\begin{align*}
\hat{\tau} : H^{} \otimes U  \otimes H^{} \otimes U &\longrightarrow k & g\otimes x \otimes h \otimes y & \longmapsto \tau(h,x) \varepsilon(g)\varepsilon(y)
\end{align*}
is a $2$-cocycle on $H\tb U$, and the algebras $H\#_{\theta_\tau} U$ and $_{\hat{\tau}}\!(H^{}\tb U)$ are isomorphic.
\end{lemma}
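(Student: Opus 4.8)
The plan is to verify the cocycle identity for $\hat\tau$ by reducing it, via the special form $\hat\tau(g\ot x\ot h\ot y)=\tau(h,x)\eps(g)\eps(y)$, to exactly the two defining equations \eqref{eqn:tauskew} for a skew $H\tb U$-pairing, and then to identify the $\hat\tau$-twisted product on $H\tb U$ with the twisted tensor product $H\#_{\theta_\tau}U$ coming from Lemma \ref{lem:tauskewtwisting}. For the first part I would first record convolution-invertibility of $\hat\tau$: since $\tau$ is convolution invertible on $H\ot U$, the map $\hat\tau^{-1}(g\ot x\ot h\ot y)=\tau^{-1}(h,x)\eps(g)\eps(y)$ is a convolution inverse in $(H\tb U)\ot(H\tb U)\to k$ — this is immediate because the counit of $H\tb U$ is $\eps_H\ot\eps_U$ and the $\eps(g)\eps(y)$ factors collapse correctly. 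The normalization $\hat\tau(1,z)=\hat\tau(z,1)=\eps(z)$ for $z\in H\tb U$ follows from $\tau(1,x)=\eps(x)$, $\tau(h,1)=\eps(h)$ together with the unit $1_H\tb 1_U$.

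The core computation is the pentagon-type identity
\[
\hat\tau(z^{(1)},w^{(1)})\,\hat\tau(z^{(2)}w^{(2)},t)=\hat\tau(w^{(1)},t^{(1)})\,\hat\tau(z,w^{(2)}t^{(2)})
\]
for $z=g\tb x$, $w=h\tb y$, $t=a\tb b$ in $E=H\tb U$. Here I would expand the comultiplication of $E$ (which is that of the tensor coalgebra $H\ot U$, so $\Delta(g\tb x)=(g_{(1)}\tb x_{(1)})\ot(g_{(2)}\tb x_{(2)})$) and the product of $E$ (the bicrossed product formula $(g\tb x)(h\tb y)=g(x_{(1)}\tr h_{(1)})\tb(x_{(2)}\tl h_{(2)})y$). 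On the left, the factor $\hat\tau(z^{(1)},w^{(1)})=\tau(h_{(1)},x_{(1)})\eps(g_{(1)})\eps(y_{(1)})$ kills several legs, and $\hat\tau(z^{(2)}w^{(2)},t)$ picks out, from the product $g_{(2)}(x_{(2)}\tr h_{(2)})\tb(x_{(3)}\tl h_{(3)})y_{(2)}$, the value $\tau(a,\text{first tensorand})\eps(\cdots)\eps(b)$; the $\eps$'s force $g,y$ and the $U$-part to disappear, leaving $\tau(h_{(1)},x_{(1)})\,\tau(a,\,x_{(2)}\tr h_{(2)})\,\eps(\text{rest})$. On the right, $\hat\tau(w^{(1)},t^{(1)})=\tau(a_{(1)},y_{(1)})\eps(h_{(1)})\eps(b_{(1)})$ and $\hat\tau(z,w^{(2)}t^{(2)})$ evaluates $\tau$ on $(h_{(2)}\tb y_{(2)})(a_{(2)}\tb b_{(2)})=h_{(2)}(y_{(2)}\tr a_{(2)})\tb(\cdots)$ against $g\tb x$, namely $\tau(h_{(2)}(y_{(3)}\tr a_{(3)}),x)\eps(g)\eps(\cdots)$. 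Thus the identity collapses, after discarding $\eps$-killed legs, to
\[
\tau(h_{(1)},x_{(1)})\,\tau(a,x_{(2)}\tr h_{(2)})=\tau(a_{(1)},y)|_{y\mapsto?}\cdots
\]
— more precisely, tracking the surviving indices one finds it amounts to the conjunction of $\tau(h,xy)=\tau(h_{(1)},y_{(1)})\tau(y_{(2)}\tr h_{(2)},x)$ and $\tau(gh,x)=\tau(g_{(1)},x_{(1)})\tau(h,x_{(2)}\tl g_{(2)})$ applied to the appropriate arguments, i.e. exactly \eqref{eqn:tauskew}. (Taking $t=a\tb 1$ isolates the second equation of \eqref{eqn:tauskew}; taking $z=1\tb x$, $w=1\tb y$ isolates the first; the general case follows by multiplicativity of both sides in each slot, which is what \eqref{eqn:tau-extension} already records.) So I would organize this half as: prove the cocycle identity on the generating sub-cases, then invoke the extension formula \eqref{eqn:tau-extension}, or alternatively just push the full index bookkeeping through once.

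Finally, for the algebra isomorphism, recall from \S\ref{subsec:cocycle} that the product on $_{\hat\tau}(H\tb U)$ is $z\cdot w=\hat\tau(z^{(1)},w^{(1)})\,z^{(2)}w^{(2)}$. Plugging in $z=g\tb x$, $w=h\tb y$: $\hat\tau(g_{(1)}\tb x_{(1)},h_{(1)}\tb y_{(1)})=\tau(h_{(1)},x_{(1)})\eps(g_{(1)})\eps(y_{(1)})$, so the product becomes $\tau(h_{(1)},x_{(1)})\,(g\tb x_{(2)})(h_{(2)}\tb y)=\tau(h_{(1)},x_{(1)})\,g(x_{(2)}\tr h_{(2)})\tb(x_{(3)}\tl h_{(3)})y$. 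This is visibly the multiplication of $H\#_{\theta_\tau}U$, since $\theta_\tau(x\ot h)=\tau(h_{(1)},x_{(1)})\,x_{(2)}\tr h_{(2)}\ot x_{(3)}\tl h_{(3)}$ and the twisted tensor product multiplies $(g\ot x)(h\ot y)=g\cdot\theta_\tau(x\ot h)_{[1]}\ot\theta_\tau(x\ot h)_{[2]}\cdot y$. Hence the identity map on the common underlying space $H\ot U$ is an algebra isomorphism $H\#_{\theta_\tau}U\xrightarrow{\ \sim\ }{}_{\hat\tau}(H\tb U)$.

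The main obstacle is purely bookkeeping: keeping the Sweedler indices straight in the cocycle verification, since $\hat\tau$'s $\eps$-factors make many legs collapse but one must be careful that the collapse happens consistently on both sides and that the surviving evaluation genuinely matches \eqref{eqn:tauskew} rather than some twisted variant. Reducing to the generating cases $t=a\tb1$, $z=1\tb x$ and then citing \eqref{eqn:tau-extension} is the cleanest way to sidestep the worst of it.
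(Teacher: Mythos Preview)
Your plan is correct, but the paper takes a shorter route that you may want to know about. The paper does \emph{not} verify the $2$-cocycle identity for $\hat\tau$ directly. Instead it observes (as recorded in \S\ref{subsec:cocycle}) that a convolution-invertible map $\sigma$ is a $2$-cocycle if and only if the formula \eqref{eqn:cocycle-def-galois} defines an associative unital product. So the paper simply computes the product of $_{\hat\tau}(H\tb U)$ exactly as you do in your final paragraph, notes that it coincides with the product of $H\#_{\theta_\tau}U$, and concludes: since $\theta_\tau$ is a twisting map by Lemma \ref{lem:tauskewtwisting}, the latter product is associative and unital, hence $\hat\tau$ is a $2$-cocycle \emph{and} the two algebras are isomorphic, all in one stroke.

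Your approach --- reducing the cocycle identity to the skew-pairing axioms \eqref{eqn:tauskew} --- is logically independent and also works, but it duplicates effort: Lemma \ref{lem:tauskewtwisting} already translated \eqref{eqn:tauskew} into the associativity of $H\#_{\theta_\tau}U$, and the paper simply reuses that. Your route has the minor advantage of being self-contained (not invoking the equivalence between the cocycle condition and associativity), but the bookkeeping you flag as the main obstacle is entirely avoided by the paper's argument. One small slip in your sketch: in $\hat\tau(z^{(2)}w^{(2)},t)$ with $t=a\tb b$, the surviving $\tau$ evaluates $a$ against the $U$-part of $z^{(2)}w^{(2)}$, not the ``first tensorand'' as you wrote; this does not affect the validity of your plan.
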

	
	\begin{proof}
		First it is clear that $\hat{\tau}$ is convolution invertible since $\tau$ is. In the algebra $H\#_{\theta_\tau} U$, we have
		$$g \# x \cdot h\#y= \tau(h_{(1)},x_{(1)})\, g(x_{(2)}\tr h_{(2)})\# (x_{(3)}\tl h_{(3)})y$$
		while for the product in $_{\hat{\tau}}\!(H^{}\tb U)$ we have
			\begin{align*}
					g\tb x \cdot h\tb y & = \hat{\tau} (g_{(1)} \tb x_{(1)}, h_{(1)}\tb y_{(1)})  g_{(2)}(x_{(2)}\tr h_{(2)})\# (x_{(3)}\tl h_{(3)})y_{(2)}\\
					& = \tau(h_{(1)},x_{(1)})\, g(x_{(2)}\tr h_{(2)})\# (x_{(3)}\tl h_{(3)})y
			\end{align*}
		Hence the two products coincide, which shows simultaneously that $\hat{\tau}$ is a $2$-cocycle on $H\tb U$ and that $H\#_{\theta_\tau} U\simeq\, _{\hat{\tau}}\!(H^{}\tb U)$.
	\end{proof}

\begin{lemma}
Let $(H,U,\tr,\tl)$ be a matched pair of Hopf algebras and let $\tau : H \otimes U \to k$  be a skew $H\tb U$-pairing. Then the maps
$$\tl_\tau\colon U\ot H\to U, \qquad \tr_\tau \colon U\ot H\to H$$
defined by
\[  x \tr_\tau h = \tau(h_{(1)},x_{(1)}) \tau^{-1}(h_{(3)}, x_{(3)}) x_{(2)} \triangleright h_{(2)} , \ x \tl_\tau h = \tau(h_{(1)},x_{(1)}) \tau^{-1}(h_{(3)}, x_{(3)}) x_{(2)} \triangleleft h_{(2)}
\]
define a matched pair  of Hopf algebras $(H,U,\tr_\tau ,\tl_\tau)$. We denote by 
$H\overset{\tau}\tb U$
the corresponding bicrossed product.
\end{lemma}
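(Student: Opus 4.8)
The plan is to recognize the claimed matched pair as the one underlying the Doi twist of the bicrossed product $E\coloneqq H\tb U$ by the $2$-cocycle $\hat\tau$, and then to read off the two actions from this presentation. By Lemma~\ref{lem:tauskewcocycle}, $\hat\tau$ is a $2$-cocycle on $E$, so the construction of Subsection~\ref{subsec:cocycle} yields a Hopf algebra $E^{\hat\tau}$ whose underlying coalgebra is the tensor product coalgebra $H\ot U$ and whose product is that of $E$ conjugated by $\hat\tau$.

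First I would check that the canonical maps $\iota_H\colon H\to E^{\hat\tau}$, $h\mapsto h\tb 1$, and $\iota_U\colon U\to E^{\hat\tau}$, $x\mapsto 1\tb x$, are injective bialgebra maps, hence Hopf algebra embeddings (a bialgebra map between Hopf algebras automatically commutes with antipodes, so there is no need to handle the twisted antipode explicitly). They are coalgebra maps since the coalgebra structure of $E^{\hat\tau}$ is that of $E$. They are algebra maps because, from the explicit formulas $\hat\tau(g\tb x\ot h\tb y)=\tau(h,x)\eps(g)\eps(y)$ and $\hat\tau^{-1}(g\tb x\ot h\tb y)=\tau^{-1}(h,x)\eps(g)\eps(y)$ together with $\tau(h,1)=\eps(h)$ and $\tau(1,x)=\eps(x)$, both $\hat\tau$ and $\hat\tau^{-1}$ restrict to $\eps\ot\eps$ on $(H\tb 1)^{\ot 2}$, on $(1\tb U)^{\ot 2}$ and on $(H\tb 1)\ot(1\tb U)$; hence the twisted product restricted to $H\tb 1$, resp. $1\tb U$, coincides with the product of $E$ there, which in turn restricts to the products of $H$, resp. $U$. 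Since $m_{E^{\hat\tau}}\circ(\iota_H\ot\iota_U)\colon H\ot U\to E^{\hat\tau}$, $h\ot x\mapsto h\tb x$, is then the identity of $H\ot U$ (a short computation from the Doi twist formula), $E^{\hat\tau}$ factors through the Hopf subalgebras $\iota_H(H)$ and $\iota_U(U)$; by \cite[Theorem~7.2.3]{maj} this factorization is equivalent to a matched pair structure $(H,U,\tr_\tau,\tl_\tau)$ on $(H,U)$ with $E^{\hat\tau}\simeq H\overset{\tau}{\tb}U$.

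It then remains to identify these actions with the stated formulas. For any bicrossed product, the twisting map $\omega\colon U\ot H\to H\ot U$ of \eqref{eqn:twisting-bicrossed} satisfies $\omega(x\ot h)=(1\tb x)(h\tb 1)$ read through the identification of the underlying space with $H\ot U$, and one recovers the actions as $\tr_\tau=(\id_H\ot\eps_U)\circ\omega$ and $\tl_\tau=(\eps_H\ot\id_U)\circ\omega$, using that $\tr$, $\tl$ are coalgebra maps, i.e. $\eps_H\circ\tr=\eps_U\ot\eps_H=\eps_U\circ\tl$. Applying the Doi twist formula of Subsection~\ref{subsec:cocycle} to the product of $E^{\hat\tau}$ and using that the comultiplication of $E$ is the tensor one, a direct computation gives
\[
(1\tb x)\cdot_{E^{\hat\tau}}(h\tb 1)=\tau(h_{(1)},x_{(1)})\,\tau^{-1}(h_{(4)},x_{(4)})\,(x_{(2)}\tr h_{(2)})\tb(x_{(3)}\tl h_{(3)}),
\]
and applying $\id_H\ot\eps_U$, resp. $\eps_H\ot\id_U$, collapses the remaining $U$-, resp. $H$-, leg and yields precisely the displayed expressions for $x\tr_\tau h$ and $x\tl_\tau h$.

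The routine parts are the form of $\hat\tau^{-1}$ (already implicit in the proof of Lemma~\ref{lem:tauskewcocycle}) and the Sweedler bookkeeping in the last display; I expect the only point that really needs care to be matching conventions, namely that the recipe $\tr_\tau=(\id\ot\eps)\circ\omega$, $\tl_\tau=(\eps\ot\id)\circ\omega$ extracts the matched-pair actions from the factorization compatibly with \eqref{eqn:compatibilities-matchedpair} and \eqref{eqn:twisting-bicrossed}. A less conceptual alternative would be to verify the axioms \eqref{eqn:compatibilities-matchedpair} for $\tr_\tau,\tl_\tau$ directly from \eqref{eqn:tauskew}, but the twist argument above is shorter and makes transparent why the bicrossed structure is preserved.
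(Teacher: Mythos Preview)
Your argument is correct and takes a genuinely different route from the paper. The paper proves the lemma by direct verification: it states that $(U,\tl_\tau)$ and $(H,\tr_\tau)$ are module coalgebras, then checks the compatibilities \eqref{eqn:compatibilities-matchedpair} for $\tr_\tau,\tl_\tau$ by explicit Sweedler computations using \eqref{eqn:tauskew} and the matched pair axioms for $\tr,\tl$. You instead invoke Lemma~\ref{lem:tauskewcocycle} to know that $\hat\tau$ is a $2$-cocycle on $E=H\tb U$, form the Doi twist $E^{\hat\tau}$, verify that $H$ and $U$ still embed as Hopf subalgebras with $m\circ(\iota_H\ot\iota_U)$ bijective, and then let \cite[Theorem~7.2.3]{maj} hand you the matched pair for free; the explicit formulas drop out by reading off $(1\tb x)\cdot_{E^{\hat\tau}}(h\tb 1)$.

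Your approach is more conceptual and has the bonus of establishing the isomorphism $H\overset{\tau}{\tb}U\simeq (H\tb U)^{\hat\tau}$ as a byproduct, which the paper needs anyway in the proof of Proposition~\ref{prop:skewbicrossed} and records there as a separate observation. The paper's direct computation, on the other hand, is self-contained and does not rely on Lemma~\ref{lem:tauskewcocycle} or on the Doi twist machinery, so it would survive in a presentation where those results are not yet available. The ``less conceptual alternative'' you mention at the end is in fact exactly what the paper does.
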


\begin{proof} 
	It is clear that $(U,\triangleleft_\tau)$ is a left $H$-module coalgebra and $(H,\triangleright_\tau)$ is a right $U$-module coalgebra.	Next we check the conditions in \eqref{eqn:compatibilities-matchedpair} for $\tl_\tau$ and $\tr_\tau$; using this condition for $\tl$ and $\tr$, together with \eqref{eqn:tauskew}. We have that, for $x\in U$ and $a,b\in H$, 
	\begin{align*}
&(x_{(1)}\tr_\tau a_{(1)})((x_{(2)}\tl_\tau a_{(2)})\tr_\tau b)=\tau(a_{(1)},x_{(1)}) \tau^{-1}(a_{(3)}, x_{(3)}) 
\tau(a_{(4)},x_{(4)}) \tau^{-1}(a_{(8)}, x_{(8)})\\
&\hspace*{5cm} \tau(b_{(1)},x_{(5)} \triangleleft a_{(5)}) \tau^{-1}(b_{(3)}, x_{(7)} \triangleleft a_{(7)}) (x_{(2)} \triangleright a_{(2)})(x_{(6)} \triangleleft a_{(6)}) \triangleright b_{(2)}\\
&\stackrel{\eqref{eqn:compatibilities-matchedpair}}{=}
\tau(a_{(1)},x_{(1)}) \tau^{-1}(a_{(6)}, x_{(6)})\tau(b_{(1)},x_{(2)} \triangleleft a_{(2)}) \tau^{-1}(b_{(3)}, x_{(5)} \triangleleft a_{(5)}) (x_{(3)} \triangleright a_{(3)})(x_{(4)} \triangleleft a_{(4)}) \triangleright b_{(2)}\\
&\stackrel{\eqref{eqn:tauskew},\eqref{eqn:compatibilities-matchedpair}}{=}\tau(a_{(1)}b_{(1)},x_{(1)}) \tau^{-1}(a_{(3)}b_{(3)}, x_{(3)}) x_{(2)} \triangleright (a_{(2)}b_{(2)})=x\tr_\tau(ab).
	\end{align*}
Similarly one checks that $(yx)\tl a=(y\tl (x_{(1)}\tr a_{(1)}))(x_{(2)}\tl a_{(2)})$ for $y,x\in U$, $a\in H$. Finally, if $y\in U, a\in H$:
	\begin{align*}
			y_{(1)}\tl_\tau &a_{(1)}\ot y_{(2)}\tr_\tau a_{(2)}=y_{(2)}\tl_\tau a_{(2)}\ot y_{(1)}\tr_\tau a_{(1)}
	\end{align*}
as this property holds for $\tl,\tau$ and the cancellation of the proper $\tau$ and $\tau^{-1}$ factors.
	\end{proof}

We conclude with the announced generalization of Proposition \ref{prop:smashgaloishopfhopfskew}.

\begin{proposition}\label{prop:skewbicrossed}
Let $(H,U,\tr,\tl)$ be a matched pair of Hopf algebras and let $\tau : H \otimes U \to k$  be a skew $H\tb U$-pairing. Then the twisted tensor product  $H\#_{\theta_\tau} U$ is a $H\overset{\tau} \tb U$-$H\tb U$-bi-Galois object, whose left and right coactions are induced by the comultiplication of the tensor product coalgebra $H\otimes U$.
\end{proposition}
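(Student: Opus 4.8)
The strategy is to reduce the statement to the cocycle formalism of Subsections \ref{subsec:cocycle} and to Lemma \ref{lem:tauskewcocycle}, exactly as Proposition \ref{prop:smashgaloishopfhopfskew} was obtained from the skew-pairing case. First I would invoke Lemma \ref{lem:tauskewcocycle}, which tells us that $\hat\tau$ is a $2$-cocycle on the bicrossed product $E\coloneqq H\tb U$ and that there is an algebra isomorphism $H\#_{\theta_\tau}U\simeq{}_{\hat\tau}\!E$. By the general theory recalled in Subsection \ref{subsec:cocycle} (the results of \cite{bm,dt86,doi,sc1}), the algebra ${}_{\hat\tau}\!E$, with the right $E$-comodule structure induced by the comultiplication of $E$, is an $E^{\hat\tau}$-$E$-bi-Galois object, with left comodule structure again induced by the comultiplication of $E$. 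Since the coalgebra underlying $E$ is just the tensor product coalgebra $H\ot U$, both coactions on $H\#_{\theta_\tau}U$ are indeed those induced by the comultiplication of $H\ot U$, as claimed.

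It then remains only to identify the Hopf algebra $E^{\hat\tau}=(H\tb U)^{\hat\tau}$ with $H\overset{\tau}\tb U$. Both have $H\ot U$ as underlying coalgebra, so this is a computation of products. On $E^{\hat\tau}$ the product is, by \eqref{eqn:cocycle-def},
\[
(g\tb x).(h\tb y)=\hat\tau\big((g_{(1)}\tb x_{(1)}),(h_{(1)}\tb y_{(1)})\big)\,\hat\tau^{-1}\big((g_{(3)}\tb x_{(3)}),(h_{(3)}\tb y_{(3)})\big)\,(g_{(2)}\tb x_{(2)})(h_{(2)}\tb y_{(2)}),
\]
and substituting $\hat\tau(g\tb x,h\tb y)=\tau(h,x)\eps(g)\eps(y)$, $\hat\tau^{-1}(g\tb x,h\tb y)=\tau^{-1}(h,x)\eps(g)\eps(y)$, together with the formula for the product in $H\tb U$, gives
\[
(g\tb x).(h\tb y)=\tau(h_{(1)},x_{(1)})\,\tau^{-1}(h_{(3)},x_{(3)})\,g(x_{(2)}\tr h_{(2)})\tb(x_{(4)}\tl h_{(4)})y.
\]
On the other hand, by definition of $H\overset{\tau}\tb U$ and of $\tr_\tau,\tl_\tau$, the product in $H\overset{\tau}\tb U$ is $(g\tb x)(h\tb y)=g(x_{(1)}\tr_\tau h_{(1)})\tb(x_{(2)}\tl_\tau h_{(2)})y$; expanding $\tr_\tau$ and $\tl_\tau$ and checking that the $\tau$/$\tau^{-1}$ factors match up (one uses that the relevant Sweedler legs of $x$ and $h$ appearing in $\tr_\tau$ and $\tl_\tau$ combine into the single pattern above, after applying the matched-pair axiom $y_{(1)}\tl a_{(1)}\ot y_{(2)}\tr a_{(2)}=y_{(2)}\tl a_{(2)}\ot y_{(1)}\tr a_{(1)}$ to reorder legs) yields precisely the same expression. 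Hence $E^{\hat\tau}=H\overset{\tau}\tb U$ as Hopf algebras, and the proposition follows.

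The only genuinely delicate point is this last identification of the product on $E^{\hat\tau}$ with that of $H\overset{\tau}\tb U$: one must be careful about the order of the Sweedler indices, since $\tr_\tau$ and $\tl_\tau$ each already carry a $\tau\,\tau^{-1}$ sandwich, and after forming the product $x_{(1)}\tr_\tau h_{(1)}$ in the first tensor leg and $x_{(2)}\tl_\tau h_{(2)}$ in the second, one needs the coalgebra-map property of $\tr,\tl$ and the last compatibility in \eqref{eqn:compatibilities-matchedpair} to collapse the doubled cocycle factors into a single $\tau(\,\cdot\,)\tau^{-1}(\,\cdot\,)$ pair matching the $\hat\tau$-twist. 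Everything else is the routine invocation of already-quoted results. I would carry out this bookkeeping once, in detail, and leave the remaining matched-pair verifications (already essentially done in the preceding lemmas) to the reader.
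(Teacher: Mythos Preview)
Your proposal is correct and follows essentially the same route as the paper: invoke Lemma \ref{lem:tauskewcocycle} to identify $H\#_{\theta_\tau}U$ with ${}_{\hat\tau}(H\tb U)$, use the standard cocycle-twist theory of Subsection \ref{subsec:cocycle} to get the $(H\tb U)^{\hat\tau}$-$(H\tb U)$-bi-Galois structure, and then verify $(H\tb U)^{\hat\tau}\simeq H\overset{\tau}\tb U$ by comparing products. The paper's own proof is brief and leaves that last product comparison as ``a similar computation to that of the proof of Lemma \ref{lem:tauskewcocycle}''; your more detailed bookkeeping (including the observation that the doubled $\tau\,\tau^{-1}$ factors coming from $\tr_\tau$ and $\tl_\tau$ collapse because the inner $\tau^{-1}$ and $\tau$ sit at adjacent Sweedler indices) is exactly what is needed to flesh this out, though note that your displayed formula for the $E^{\hat\tau}$-product has the indices on $\tau^{-1}$ and on $\tl$ swapped---it should read $\tau(h_{(1)},x_{(1)})\tau^{-1}(h_{(4)},x_{(4)})\,g(x_{(2)}\tr h_{(2)})\tb(x_{(3)}\tl h_{(3)})y$.
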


\begin{proof} 	
As $H\#_{\theta_\tau} U\simeq {_{\hat{\tau}}\!(H^{}\tb U)}$ by Lemma \ref{lem:tauskewcocycle},  then  $H\#_{\theta_\tau} U$ is a $(H \tb U)^{\hat{\tau}}$-$H\tb U$-bi-Galois object, whose left and right coactions are induced by the comultiplication of the tensor product coalgebra $H\otimes U$. Moreover, the algebra $(H \tb U)^{\hat{\tau}}$ is unique with this property, up to isomorphism.

To end the proof, we observe that $H\overset{\tau} \tb U\simeq (H \tb U)^{\hat{\tau}}$, in a similar computation to that of the proof of Lemma \ref{lem:tauskewcocycle}, as the products coincide, see \eqref{eqn:cocycle-def}.
	\end{proof}

\section{Hopf-Galois objects for bicrossed product Hopf algebras}\label{sec:hopf-galois-bicrossed}


The case in which the  Hopf-Galois objects for Hopf algebra bicrossed products are bicleft were already studied in \cite{sc02,bicar}. 
The relevant result there is the Kac exact sequence, which, when the two factors are cocommutative,  contains all the  information about Hopf-Galois objects over the bicrossed product. In this section we discuss Hopf-Galois objects over bicrossed products of Hopf algebras in full generality.

\subsection{General setting}
Recall from \eqref{eqn:twisting-bicrossed} that given a matched pair of Hopf algebras $(H,U,\triangleright, \triangleleft)$,  the associated twisting map $U\ot H\to H\ot U$ is denoted by $\omega_{\tb}$, so $\omega_{\tb}(x,a)=x_{(1)}\tr a_{(1)}\ot x_{(2)}\tl a_{(2)}$.

\begin{lemma}\label{lem:bicrossed}
Let $(H,U,\triangleright, \triangleleft)$ be a matched pair of Hopf algebras and let
 $E=H\tb U$ be the associated Hopf algebra bicrossed product. Let $A$, resp.~$R$, be a right $H$-Galois object, resp.~$U$-Galois object. Let $\theta\colon R\ot A\to A\ot R$ be a twisting map. Then $A\#_\theta R$ is a right $E$-comodule algebra with coaction $\rho^{\#}\coloneqq\rho_{A\ot R}$:
	\begin{align*}
		\rho_{A\ot R}&\colon A\#_\theta R\to A\#_\theta R\ot H\tb U & 
		a\# r & \mapsto a_{(0)}\# r_{(0)}\ot a_{(1)}\tb r_{(1)}
	\end{align*}
if and only if the following diagram commutes:
	\begin{align}\label{eqn:diagram-galois}
\xymatrix{ R\ot A \ar[r]^\theta  \ar[d]^{ \rho_{R\ot A}} & A\ot R \ar[d]^{\rho_{A\ot R}}\\
	R\ot A\ot U\ot H  \ar[r]^{\theta\ot \omega_{\tb}}&  A \ot R \ot H\ot U}
\end{align}
This amounts to the fact that $\theta$ is a morphism of $H\otimes U$-comodules, where the $H\otimes U$-comodule structure on $R\otimes A$ is given by $({\rm id}_{R\otimes A} \otimes \omega_{\tb} )\circ \rho_{R\otimes A}$ (recall that $\omega_{\tb}$ is a coalgebra map) and the one of $A\otimes R$ is given by $\rho^{\#}=\rho_{A\ot R}$.

Explicitly,  $A\#_\theta R$ is a right $E$-comodule algebra if and only if
\begin{align}\label{eqn:theta-comodules}
\theta (r_{(0)}\ot a_{(0)})\ot (r_{(1)}\tr a_{(1)}\tb r_{(2)}\tl a_{(2)})=\rho^\#\left(\theta(r\ot a) \right).
\end{align}
\end{lemma}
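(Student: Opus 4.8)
The plan is to verify directly that the stated coaction $\rho^\#=\rho_{A\otimes R}$ is an algebra map, using that $A$ and $R$ are comodule algebras and that $\theta$ is a twisting map, and that the commutativity of the diagram \eqref{eqn:diagram-galois} is exactly the compatibility needed. First I would recall the three things I have for free: that $\rho_A\colon A\to A\otimes H$ and $\rho_R\colon R\to R\otimes U$ are algebra maps (since $A,R$ are comodule algebras), that $\omega_{\tb}$ is a coalgebra map and a twisting map giving $H\tb U$ as an algebra, and that $\theta$ makes $A\#_\theta R$ an associative algebra with unit $1\otimes 1$. The map $\rho^\#$ is automatically a coassociative, counital coaction on the \emph{coalgebra} level because the coalgebra $H\tb U$ is just the tensor product coalgebra $H\otimes U$ and $\rho^\#$ is the tensor product of $\rho_A$ and $\rho_R$ followed by the middle flip; so the only content is that $\rho^\#$ is multiplicative and unital for the twisted-tensor-product algebra structures on source and target.

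Unitality is immediate: $\rho^\#(1\otimes 1)=1\otimes 1\otimes 1_H\otimes 1_U$, which is the unit of $(A\#_\theta R)\otimes E$. For multiplicativity, I would write the product in $A\#_\theta R$ as $m_{A\#_\theta R}=(m_A\otimes m_R)\circ(\id_A\otimes\theta\otimes\id_R)$ and the product in $E=H\tb U$ as $m_E=(m_H\otimes m_U)\circ(\id_H\otimes\omega_{\tb}\otimes\id_U)$, and then chase the two composites
\[
(A\#_\theta R)^{\otimes 2}\to \bigl((A\#_\theta R)\otimes E\bigr)^{\otimes 2}\to (A\#_\theta R)\otimes E
\]
(apply $\rho^\#$ to each factor, then multiply) versus
\[
(A\#_\theta R)^{\otimes 2}\to A\#_\theta R\to (A\#_\theta R)\otimes E
\]
(multiply first, then apply $\rho^\#$). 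The only place the two chases can diverge is where a factor $r$ of the first tensorand must be moved past a factor $a$ of the second tensorand: in the ``multiply then coact'' route this passage is performed by $\theta$ and \emph{then} coacted, producing the left-hand side of \eqref{eqn:theta-comodules}; in the ``coact then multiply'' route the passage is performed on already-coacted elements, which means $\theta$ acts on $r_{(0)}\otimes a_{(0)}$ while simultaneously $\omega_{\tb}$ must reorder the $H$- and $U$-components $r_{(1)}$ and $a_{(1)}$, giving the right-hand side $\theta(r_{(0)}\otimes a_{(0)})\otimes(r_{(1)}\tr a_{(1)}\tb r_{(2)}\tl a_{(2)})$. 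Since $\rho_A$ and $\rho_R$ are already known to be multiplicative, all the $m_A$, $m_R$, $m_H$, $m_U$ parts of the chase match up on the nose; hence the equality of the two composites is equivalent to precisely the equality of these two ``passage'' terms, which is \eqref{eqn:theta-comodules}, i.e. the commutativity of \eqref{eqn:diagram-galois}. Reading the diagram componentwise: the top-left $R\otimes A$ coacts via $\rho_{R\otimes A}=(\id\otimes\id\otimes\text{flip}\,)\circ(\rho_R\otimes\rho_A)$ into $R\otimes A\otimes U\otimes H$, then $\theta\otimes\omega_{\tb}$ lands in $A\otimes R\otimes H\otimes U$; going the other way, $\theta$ first then $\rho_{A\otimes R}=\rho^\#$; these agree iff \eqref{eqn:theta-comodules} holds, which is the assertion that $\theta$ is a morphism of $H\otimes U$-comodules for the indicated structures.

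The main obstacle is purely bookkeeping: keeping the Sweedler indices straight through the double iterated coactions and making sure that the ``coact then multiply'' composite genuinely forces $\omega_{\tb}$ to appear in the $H\otimes U$-slot in the reordered form $r_{(1)}\tr a_{(1)}\tb r_{(2)}\tl a_{(2)}$ rather than in the naive order $a_{(1)}\otimes r_{(1)}$. This is exactly why the multiplication of $E$ is a \emph{bicrossed} (twisted) product and not a plain tensor product, and it is the reason $\omega_{\tb}$—and not the flip—must enter the diagram. Concretely I would expand $\rho^\#\bigl(m_{A\#_\theta R}(a\otimes r\otimes a'\otimes r')\bigr)$ using $\theta(r\otimes a')=r\hr a'\otimes r\hl a'$, then $m_A$, $m_R$, and finally $\rho_A$, $\rho_R$ (which, being algebra maps, distribute over these products), and compare with the result of first applying $\rho^\#$ to each of $a\otimes r$ and $a'\otimes r'$ and then multiplying in $(A\#_\theta R)\otimes E$; the residual discrepancy is the single equation \eqref{eqn:theta-comodules}. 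Once this identification is made, the ``if and only if'' is immediate. I would close by noting that the final displayed reformulation \eqref{eqn:theta-comodules} is just \eqref{eqn:diagram-galois} written out in Sweedler notation, using $\rho_{R\otimes A}(r\otimes a)=r_{(0)}\otimes a_{(0)}\otimes r_{(1)}\otimes a_{(1)}$ and $\omega_{\tb}(r_{(1)}\otimes a_{(1)})=r_{(1)}\tr a_{(1)}\otimes r_{(2)}\tl a_{(2)}$, so no further argument is needed.
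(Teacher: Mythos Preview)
Your approach is essentially the same as the paper's: reduce the comodule-algebra condition to multiplicativity of $\rho^\#$, and identify the single nontrivial compatibility as \eqref{eqn:theta-comodules}. The paper streamlines this by observing that since $\rho^\#$ is already multiplicative on each of the subalgebras $A\#1$ and $1\#R$ and since $(1\#r)(a\#1)=\theta(r\otimes a)$, it suffices to check $\rho^\#(1\#r)\,\rho^\#(a\#1)=\rho^\#(\theta(r\otimes a))$ directly, which yields \eqref{eqn:theta-comodules} in one line; your longer diagram chase arrives at the same place. One small slip: you have the sides of \eqref{eqn:theta-comodules} reversed in your description---the ``multiply then coact'' route gives $\rho^\#(\theta(r\otimes a))$, which is the \emph{right}-hand side, while ``coact then multiply'' gives $\theta(r_{(0)}\otimes a_{(0)})\otimes(r_{(1)}\tr a_{(1)}\tb r_{(2)}\tl a_{(2)})$, the \emph{left}-hand side.
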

\pf
We look for sufficient and necessary conditions so that $\rho^{\#}\colon A\#_\theta R \longrightarrow A\#_\theta R \otimes H\tb U$ 
is an algebra map. 
It is enough to see that 
\[
\rho^{\#}(1\#r)\rho^{\#}(a\#1)=\rho^{\#}\left(\theta (r\ot a)\right), \quad r\in R, \ a\in A,
\]
as $\theta(r,a)=(1\#r)(a\#1)$. It readily follows that:
\begin{align*}
\rho^{\#}(1\#r)\rho^{\#}(a\#1)&=(1 \# r_{(0)} \otimes 1 \tb r_{(1)})(a_{(0)} \# 1 \otimes a_{(1)}  \tb  1)\\
&=\theta(r_{(0)}\ot a_{(0)})\ot r_{(1)}\tr a_{(1)} \tb r_{(2)}\tl a_{(2)}.
\end{align*}	
Thus $\rho^{\#}$ is an algebra map if and only if \eqref{eqn:theta-comodules} holds.
\epf
Using the notation as in \eqref{eqn:theta-explicit}, equation \eqref{eqn:theta-comodules} becomes
\begin{align*}
(r_{(0)}\hr a_{(0)})&\# (r_{(0)}\hl a_{(0)})\ot (r_{(1)}\tr a_{(1)}\tb r_{(2)}\tl a_{(2)})\\
&=(r\hr a)_{(0)}\# (r\hl a)_{(0)}\ot (r\hr a)_{(1)}\tb (r\hl a)_{(1)}.
\end{align*}

We arrive to the description of the right Galois objects over  $E =H \tb U$.

\begin{theorem}\label{thm:bicrossedtwist1}
Let $(H,U,\tr,\tl)$ be a matched pair of Hopf algebras and let $E\coloneqq H\tb U$ be the corresponding bicrossed product. 
\begin{enumerate}
	\item Let $A$, resp.~$R$, be a right $H$-Galois object, resp.~$U$-Galois object, and assume there is a twisting map $\theta\colon R\ot A\to A\ot R$ such that \eqref{eqn:diagram-galois} commutes. Then $A\#_\theta R$ is a right $H\tb U$-Galois object, with the tensor product comodule structure $\rho^\#$ as in Lemma \ref{lem:bicrossed}.
	\item Let $A, A'$, resp.~$R, R'$, be right $H$-Galois objects, resp.~$U$-Galois objects, and assume that there are twisting maps $\theta\colon R\ot A\to A\ot R$ and $\theta'\colon R'\ot A'\to A'\ot R'$such that \eqref{eqn:diagram-galois} commutes. Then the above right $H\tb U$-Galois object  $A\#_\theta R$ and  $A'\#_\theta R'$ are isomorphic if and only if there exist a right $A$-comodule algebra isomorphism $f : A\to A'$ and a right $U$-comodule isomorphism $g : R\to R'$ such that $(f \ot g) \circ \theta = \theta'\circ (g \otimes f)$. 
	\item Conversely, any right $E$-Galois object arises as $A\#_\theta R$ for some $A$, $R$ and $\theta\colon R\ot A\to A\ot R$ as above.
\end{enumerate} 
\end{theorem}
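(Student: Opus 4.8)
The plan is to prove the three parts in order, with part (3) being the substantial one. For part (1): I would start from Lemma \ref{lem:bicrossed}, which already guarantees that $A\#_\theta R$ is a right $E$-comodule algebra as soon as diagram \eqref{eqn:diagram-galois} commutes. So it remains to check two things: that the coinvariants are trivial, $(A\#_\theta R)^{\co E}=k$, and that the Galois map $\can^\#\colon (A\#_\theta R)\ot (A\#_\theta R)\to (A\#_\theta R)\ot E$ is bijective. For the coinvariants, I would note that the $E$-coaction on $a\#r$ is $a_{(0)}\#r_{(0)}\ot a_{(1)}\tb r_{(1)}$, and since $H\tb U$ is $H\ot U$ as a coalgebra, an element is coinvariant iff it is coinvariant for the tensor product coaction; a filtration/linear-independence argument (or the fact that $A\ot R\to A\ot R\ot H\ot U$ coincides, up to the identification of underlying vector spaces, with $\rho_A\ot\rho_R$) reduces this to $A^{\co H}\ot R^{\co U}=k\ot k$. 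For bijectivity of $\can^\#$: the cleanest route is to exhibit the inverse explicitly using the maps $\kappa_A\colon H\to A\ot A$ and $\kappa_R\colon U\to R\ot R$ from \eqref{eqn:kappa} together with $\theta$; alternatively, and more conceptually, I would filter $E$ by the coradical filtration (or use that $\can^\#$ is a map of free modules) and observe that $\can^\#$ is "triangular" with respect to the tensor decomposition — its associated graded is essentially $\can_A\ot\can_R$ conjugated by $\theta$, hence invertible because $\can_A$ and $\can_R$ are.

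\textbf{Part (2)} is a routine transport-of-structure argument. Given $f\colon A\to A'$ and $g\colon R\to R'$ with the stated intertwining property $(f\ot g)\circ\theta=\theta'\circ(g\ot f)$, the map $f\#g\colon A\#_\theta R\to A'\#_{\theta'}R'$ is an algebra map (the intertwining property is exactly what is needed to respect the twisted products) and it is a comodule map for the tensor product coactions, hence an isomorphism of $E$-Galois objects. Conversely, given an isomorphism $\phi\colon A\#_\theta R\to A'\#_{\theta'}R'$ of $E$-Galois objects, I would recover $f$ and $g$ by restricting $\phi$ along the canonical subcomodule(-algebra) inclusions: $A\cong A\#_\theta 1$ is the subspace of elements whose coaction lands in $(A\#_\theta R)\ot H\ot 1$, and similarly $R\cong 1\#_\theta R$ corresponds to $(A\#_\theta R)\ot 1\ot U$. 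Since $\phi$ is a comodule map it preserves these subspaces (using again that $H$ and $U$ sit inside $E=H\ot U$ as coalgebras), giving $f\colon A\to A'$ and $g\colon R\to R'$; these are comodule algebra isomorphisms, and the compatibility with $\theta,\theta'$ is forced because $\phi$ is multiplicative and $\theta(r\ot a)=(1\#r)(a\#1)$.

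\textbf{Part (3)} is where the real work is, and it is the step I expect to be the main obstacle. The plan is: let $B$ be an arbitrary right $E$-Galois object. Since $E=H\tb U$ factors through the Hopf subalgebras $\iota\colon H\hookrightarrow E$ and $j\colon U\hookrightarrow E$, the projections dual to these inclusions make $B$ into a right $H$-comodule algebra via $(\id\ot\pi_H)\circ\rho_B$ and a right $U$-comodule algebra via $(\id\ot\pi_U)\circ\rho_B$, where $\pi_H\colon E\to H$, $\pi_U\colon E\to U$ are the canonical coalgebra projections coming from the coalgebra decomposition $E=H\ot U$. I would set $A\coloneqq B^{\co U}$ (coinvariants for the $U$-coaction) and $R\coloneqq B^{\co H}$. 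The key claims to establish, in order, are: (i) $A$ is a right $H$-Galois object and $R$ is a right $U$-Galois object — this uses that $B^{\co H\tb U}=k$ forces $A^{\co H}=k$ and $R^{\co U}=k$, and that the Galois maps for $A$ and $R$ are restrictions/corestrictions of the Galois map for $B$, which is bijective; (ii) the multiplication of $B$ restricts to a linear isomorphism $A\ot R\xrightarrow{\sim}B$ — here I would mimic the classical argument for Hopf–Galois extensions relative to a Hopf subalgebra (Schauenburg, or the cleft/normal-basis type results), using that $B$ is $U$-Galois over $A$ and $H$-Galois over $R$ in a compatible way, so that $B$ is "bicleft enough" to split as a twisted tensor product; (iii) the twisting map $\theta$ obtained from this factorization (composition of the restricted multiplication $R\ot A\to B$ with the flip and the inverse of $A\ot R\xrightarrow{\sim}B$, exactly as in \cite[Theorem 2.10]{caen}) satisfies diagram \eqref{eqn:diagram-galois} — this is a direct computation: the coaction $\rho_B$ expressed in the coordinates $A\ot R$ must reproduce $\rho^\#$, and unravelling $\rho_B$ on a product $(1\#r)(a\#1)$ using that $\pi_H,\pi_U$ are compatible with the bicrossed product structure yields precisely \eqref{eqn:theta-comodules}. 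The delicate point throughout is that $\pi_H$ and $\pi_U$ are only coalgebra maps, not algebra maps, so one cannot naively say "$B$ is $H$-Galois"; instead the $H$-comodule algebra structure on $B$ is the genuinely twisted one, and verifying that $A=B^{\co U}$ is an $H$-subcomodule algebra (and dually for $R$) requires using the matched-pair axioms \eqref{eqn:compatibilities-matchedpair} to control how $\rho_B$ interacts with the two projections. Once the factorization $B\cong A\#_\theta R$ is in place as comodule algebras, parts (1) and (2) identify it with the asserted form.
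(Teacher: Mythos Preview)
Your proposal is correct and follows essentially the same route as the paper. For (1) the paper takes precisely your ``explicit inverse'' option, defining $\gamma'(a\#r\ot h\tb x)=(a\#r)\,\theta(x^{(1)}\ot h^{(1)})\ot h^{(2)}\#x^{(2)}$ via $\kappa_H,\kappa_U$ and checking $\gamma\gamma'=\gamma'\gamma=\id$ by hand; your coradical-filtration alternative is not used (and would require extra hypotheses to make precise). For (2) your argument is exactly the paper's. For (3) the paper's argument is the one you outline --- set $A=J^{\co U}$, $R=J^{\co H}$, and extract $\theta$ via \cite[Theorem~2.10]{caen} --- but the two substantive steps are pinned to specific results of Schneider rather than a generic ``bicleft'' argument: $A$ is $H$-Galois by \cite[Remark~3.11(2)]{schn}, and the multiplication isomorphism $A\ot R\simeq J$ comes from viewing $J$ as a Hopf module in $\mathcal M^U_R$ (using that $\pi_U$ is right $U$-linear, which is exactly what resolves your worry about $\pi_U$ not being an algebra map) and applying \cite[Theorem~3.7]{schn}.
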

\pf
(1) 
The associated canonical map is as follows:
\begin{align*}
\gamma : A\#_\theta R \otimes A\#_\theta R &\longrightarrow A\#_\theta R \otimes H\tb U\\
a\#x \otimes b\#y & \longmapsto a (x\hr b_{(0)}) \#   (x\hl b_{(0)})y_{(0)} \otimes b_{(1)} \tb y_{(1)}.
\end{align*}
To construct an inverse to $\gamma$, we consider the maps $\kappa_H(h)=h^{(1)} \otimes h^{(2)}\in A\ot A$, $h\in H$, and $\kappa_U(x)=x^{(1)} \otimes x^{(2)}\in R\ot R$, $x\in U$, see $\eqref{eqn:kappa}$, 
and define 
\begin{align*}
\gamma' : A\#_\theta R \otimes H\tb U &\longrightarrow A\#_\theta R \otimes A\#_\theta R\\
a\#r \otimes h\tb x & \longmapsto (a\#r)\theta(x^{(1)}\ot h^{(1)})\ot h^{(2)}\# x^{(2)}.
\end{align*}
To see that $\gamma'\circ\gamma=\id$ and $\gamma\circ\gamma'=\id$, it is enough to check that
\begin{align*}
\gamma'\circ\gamma(1\#1\ot b\# y)&=1\#1\ot b\# y, & \gamma\circ\gamma'(1\#1\ot h\tb x)&=1\#1\ot h\tb x.
\end{align*}
We have $\gamma'\circ\gamma=\id$, since:
\begin{align*}
\gamma'\left(\gamma(1\#1\ot b\# y)\right)&=\gamma'( b_{(0)} \#   y_{(0)} \otimes b_{(1)} \tb y_{(1)})\\
&=(b_{(0)} \#   y_{(0)})\theta((y_{(1)})^{(1)}\ot (b_{(1)})^{(1)})\ot (b_{(1)})^{(2)}\# (y_{(1)})^{(2)}\\
&=(b_{(0)} \#   y_{(0)})(1\# (y_{(1)})^{(1)})((b_{(1)})^{(1)}\# 1)\ot (b_{(1)})^{(2)}\# (y_{(1)})^{(2)}\\
&=(b_{(0)} \#   y_{(0)}(y_{(1)})^{(1)})((b_{(1)})^{(1)}\# 1)\ot (b_{(1)})^{(2)}\# (y_{(1)})^{(2)}\\
&\stackrel{\eqref{eqn:canonical-schneider}}{=}(b_{(0)} (b_{(1)})^{(1)}\# 1)\ot (b_{(1)})^{(2)}\# y\stackrel{\eqref{eqn:canonical-schneider}}{=}1\# 1\ot b\# y.
\end{align*}

Similarly, we have $\gamma\circ\gamma'=\id$, since:
\begin{align*}
\gamma&\left(\gamma'(1\#1\ot  h\tb x)\right)=\gamma(\theta(x^{(1)}\ot h^{(1)})\ot h^{(2)}\# x^{(2)})\\
&=\theta(x^{(1)}\ot h^{(1)})( (h^{(2)})_{(0)}\# (x^{(2)})_{(0)})\ot  (h^{(2)})_{(1)}\# (x^{(2)})_{(1)}\\
&=(x^{(1)}\hr h^{(1)}\#x^{(1)}\hl h^{(1)})( (h^{(2)})_{(0)}\# (x^{(2)})_{(0)})\ot  (h^{(2)})_{(1)}\# (x^{(2)})_{(1)}\\
&=((x^{(1)}\hr h^{(1)})  ((x^{(1)}\hl h^{(1)})\hr  (h^{(2)})_{(0)}) \# ((x^{(1)}\hl h^{(1)})\hl  (h^{(2)})_{(0)})  (x^{(2)})_{(0)}) 
\\ &\hspace*{10cm}
\ot  (h^{(2)})_{(1)}\# (x^{(2)})_{(1)}\\
&\stackrel{\eqref{eqn:theta-explicit}}{=}(x^{(1)}\hr h^{(1)}(h^{(2)})_{(0)} )\# (x^{(1)}\hl h^{(1)}(h^{(2)})_{(0)})(x^{(2)})_{(0)} \ot  (h^{(2)})_{(1)}\# (x^{(2)})_{(1)}\\
&\stackrel{\eqref{eqn:kappa-on-elements}}{=}(x^{(1)}\hr 1 )\# (x^{(1)}\hl 1 )(x^{(2)})_{(0)} \ot  (h\# (x^{(2)})_{(1)}\\
&=1\# x^{(1)}(x^{(2)})_{(0)} \ot  h\# (x^{(2)})_{(1)}\stackrel{\eqref{eqn:kappa-on-elements}}{=}1\#1\ot h\tb x.
\end{align*}

(2) An isomorphism of $H\tb U$-Galois objects $A\#_\theta R \to A'\#_{\theta '} R'$ induces an isomorphism between the $H$ and $U$-coinvariant parts, where the respective $H$ and $U$-comodule structures are induced by the coalgebra surjections $\pi_H = {\rm id}_H \otimes U : E\to H$ and $\pi_U = \varepsilon \otimes {\rm id}_U : E \to U$.
Hence our isomorphism has the form $f\otimes g$ for $f$ and $g$ as in the statement. The condition $(f \ot g) \circ \theta = \theta'\circ (g \otimes f)$ comes from the fact that $f\otimes g$ is an algebra map. The converse is an immediate verification.

(3) Follows as  \cite[Lemma 3.1]{sctaft}. We reproduce the details for completeness. Recall that both $H$ and $U$ identify with Hopf subalgebras of $E$, and moreover we have the coalgebra surjections $\pi_H = {\rm id}_H \otimes U : E\to H$ and $\pi_U = \varepsilon \otimes {\rm id}_U : E \to U$ that are respectively left $H$-linear and right $U$-linear. Let $J$ be an  $E$-Galois object and let us set
\begin{align*}
A\coloneqq J^{\co U}=\{z\in J : z_{(0)}\ot z_{(1)}\in F\ot H\}\simeq J\square_E H,\\
R\coloneqq J^{\co H}=\{z\in J : z_{(0)}\ot z_{(1)}\in F\ot U\}\simeq J\square_E U.
\end{align*}
It follows that $A$ is an $H$-Galois object by \cite[Remark 3.11 (2), p.186]{schn}; the same holds for $R$ as an $U$-Galois object. Now, we consider $J$ as a Hopf module in the category $\mathcal{M}^{U}_{R}$, with the $U$-comodule structure induced by $\pi_U$ (this is indeed a Hopf module by the $U$-linearity of $\pi_U$) and thus \cite[Theorem 3.7]{schn} gives an isomorphism (via the multiplication):
\[
A\otimes R = J^{\co U}\ot R\simeq J.
\]
Therefore, there is a twisting map $\theta\colon R\ot A\to A\ot R$ such that $J\simeq A\#_\theta R$ (see the end of Subsection \ref{sub:twist}) and since multiplication is $E$-colinear, the $E$-coaction on $A\otimes R$ has the form as in Lemma \ref{lem:bicrossed}, so that $\theta$ satisfies the conditions there.
\epf


\begin{remark}\label{rem:kappa}
Consider the maps $\kappa_H : H \to A\otimes A$ and $\kappa_U : U \to R\otimes R$ as in Subsection \ref{sub:hopfgalois}: 	$\kappa_H(h)=h^{(1)} \otimes h^{(2)}\in A\ot A$, $h\in H$, and $\kappa_U(x)=x^{(1)} \otimes x^{(2)}\in R\ot R$, $x\in U$, see $\eqref{eqn:kappa}$. Then the proof of the above theorem shows that
	\begin{align}\label{eqn:inverse-can-bicrossed}
		\kappa_{H\tb U} (h\tb x)&=\theta(x^{(1)}\ot h^{(1)})\ot h^{(2)}\# x^{(2)}=(x^{(1)}\hr h^{(1)})\# (x^{(1)}\hl h^{(1)})\ot h^{(2)}\# x^{(2)}.
\end{align}\end{remark}


\subsection{The cleft setting}\label{sec:cleft}
	
Assume that the Galois-objects $A$ and $R$ from Theorem \ref{thm:bicrossedtwist1} are cleft. In particular, there are Hopf 2-cocycles $\sigma\in Z^2(H)$ and $\tau\in Z^2(U)$ so that $A\simeq {}_\sigma H$, $R\simeq {}_\tau U$. Set $E=H\tb U$. Now, any $E$-colinear map $\theta\in \Hom^E (R\ot A, A\ot R)\simeq \Hom^E (U\ot H, H\ot U)$ is determined by a linear map $\psi\colon H\ot U\to k$ in such a way that $\theta=\theta_\psi$, where
\begin{align}\label{eqn:theta-psi}
\theta_\psi(x,h)=\psi(h_{(1)},x_{(1)})\, x_{(2)}\tr h_{(2)}\ot x_{(3)}\tl h_{(3)}, \qquad x\in U, h\in H.
\end{align}
Notice that this also reads: $\theta_\psi(x,h)=\psi(h_{(1)},x_{(1)})\, x_{(2)}\tb h_{(2)}$. As well, recall the notation $\ttl,\ttr$ from Lemma \ref{lem:reverse}.

\begin{lemma}\label{lem:cleft}
Let $\theta\in \Hom^E (R\ot A, A\ot R)\simeq \Hom^E (U\ot H, H\ot U)$ be as in \eqref{eqn:theta-psi}. Then $\theta$ is a twisting map if and only if
\begin{align}
	\begin{split}\label{eqn:twistconditions-psi}
	\tau(x_{(1)},y_{(1)})\psi(h,x_{(2)}y_{(2)})&=\psi(h_{(1)},y_{(1)})\psi(y_{(2)}\tr h_{(2)},x_{(1)})\tau(x_{(2)}\tl (y_{(3)}\tr h_{(3)}),y_{(4)}\tl h_{(4)}),\\
	\sigma(h_{(1)},t_{(1)})\psi(h_{(2)}t_{(2)},x)&=\psi(h_{(1)},x_{(1)})\psi(t_{(1)},x_{(2)}\tl h_{(2)})\sigma(x_{(3)}\tr h_{(3)},(x_{(4)}\tl h_{(4)})\tr t_{(2)}),
	\end{split}
\end{align}
for all $h,k\in H$, $x,y\in U$. 

Assume that $S_H, S_U$ are invertible. If $\psi$ is convolution invertible, then $\theta$ is invertible with 
\begin{align}\label{eqn:theta-invertible}
	\theta^{-1}(h,x)&=\psi^{-1}(h_{(1)}\ttr x_{(1)},h_{(3)}\ttl x_{(3)}) \, h_{(2)}\ttr x_{(2)}\ot h_{(4)}\ttl x_{(4)} \qquad x\in U, h\in H.
\end{align}
\end{lemma}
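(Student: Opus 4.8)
The strategy is to unwind the twisting-map conditions \eqref{eqn:twistconditions} (or, more conveniently, the equivalent form \eqref{eqn:theta-explicit}) for the specific map $\theta_\psi$ of \eqref{eqn:theta-psi}, keeping in mind that $A={}_\sigma H$ and $R={}_\tau U$ carry the \emph{twisted} products \eqref{eqn:cocycle-def-galois}, so that the multiplications $m_A$ and $m_R$ appearing in \eqref{eqn:twistconditions} are the $\sigma$- and $\tau$-deformed ones. Concretely, I would substitute $\theta_\psi$ into the first equation of \eqref{eqn:twistconditions}, $\theta\circ(m_R\ot\id)=(\id\ot m_R)\circ(\theta\ot\id)\circ(\id\ot\theta)$. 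On the left one gets $\theta_\psi(x\cdot_\tau y\ot h)$; expanding $x\cdot_\tau y=\tau(x_{(1)},y_{(1)})\,x_{(2)}y_{(2)}$ and then applying $\theta_\psi$ produces $\tau(x_{(1)},y_{(1)})\psi(h_{(1)},x_{(2)}y_{(2)})\,x_{(3)}y_{(3)}\tr h_{(2)}\ot x_{(4)}y_{(4)}\tl h_{(3)}$. On the right, one first applies $\theta_\psi$ to $y\ot h$, then $\theta_\psi\ot\id$, then $\id\ot m_R$ (the $\tau$-twisted one); using the matched-pair axioms \eqref{eqn:compatibilities-matchedpair} to collapse the nested actions $x\tr(y\tr h)=xy\tr h$ and $x\tl(y\tr h)\cdot(y\tl h)=xy\tl h$ exactly as in the proof of Lemma \ref{lem:tauskewtwisting}, the coalgebra parts match on both sides and the scalar coefficients give precisely the first line of \eqref{eqn:twistconditions-psi}. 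The second line comes identically from the second equation of \eqref{eqn:twistconditions}, $\theta\circ(\id\ot m_A)=(m_A\ot\id)\circ(\id\ot\theta)\circ(\theta\ot\id)$, now expanding $h\cdot_\sigma t=\sigma(h_{(1)},t_{(1)})h_{(2)}t_{(2)}$ and using the module-coalgebra identity $x\tr(ht)=(x_{(1)}\tr h_{(1)})((x_{(2)}\tl h_{(2)})\tr t)$. The unitality conditions $\theta(1\ot h)=h\ot 1$, $\theta(x\ot 1)=1\ot x$ reduce to $\psi(h,1)=\eps(h)$, $\psi(1,x)=\eps(x)$, which are automatic once $\psi$ is taken to restrict correctly (or can be folded into \eqref{eqn:twistconditions-psi} by specialization); I would note this briefly. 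This establishes the ``if and only if''.

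For the invertibility statement, assume $S_H,S_U$ bijective and $\psi$ convolution invertible. The natural guess for $\theta^{-1}$ is dictated by Lemma \ref{lem:reverse}: since $H\tb U\simeq U\ttb H$, the inverse of the bicrossed twisting map is governed by $\ttr,\ttl$, so I would \emph{define} the candidate $\theta'$ by the right-hand side of \eqref{eqn:theta-invertible} and check $\theta'\circ\theta_\psi=\id$ and $\theta_\psi\circ\theta'=\id$ directly. The verification uses three ingredients bundled together: (i) that $(H,\ttl)$ is a right $U$-module coalgebra and $(U,\ttr)$ a left $H$-module coalgebra with the compatibilities of Lemma \ref{lem:reverse}, so that the composite actions telescope; (ii) the identities \eqref{eqn:reverse-match} relating $ax$ (i.e.\ $\omega_\tb$) to the $\ttr,\ttl$ data, which are what make $\theta_\psi$ and $\theta'$ genuinely inverse on the ``group-like skeleton''; and (iii) the convolution identity $\psi(u_{(1)})\psi^{-1}(u_{(2)})=\eps(u)=\psi^{-1}(u_{(1)})\psi(u_{(2)})$ applied to the scalar coefficients, which cancels the $\psi$ against the $\psi^{-1}$ after the coalgebra parts have been matched. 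I would carry this out for one composite (say $\theta'\circ\theta_\psi$) and remark that the other is symmetric via Lemma \ref{lem:reverse}.

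The main obstacle is bookkeeping: in both halves the Sweedler indices proliferate badly (the defining formula for $\theta^{-1}$ already has four legs of $\Delta$, and composing with $\theta_\psi$ and then contracting via the matched-pair axioms can push this to six or eight), so the real work is organizing the computation so that the matched-pair coassociativity relations \eqref{eqn:compatibilities-matchedpair} and the ``flip'' identities \eqref{eqn:reverse-match} can be applied in the right order to collapse the nested $\tr,\tl,\ttr,\ttl$ before the $\psi$--$\psi^{-1}$ cancellation. I expect no conceptual difficulty beyond what is already present in Lemmas \ref{lem:reverse} and \ref{lem:tauskewtwisting}; the cocycles $\sigma,\tau$ enter only through the deformed products $m_A,m_R$ in the first part and play no role in the invertibility formula, which is a reassuring consistency check.
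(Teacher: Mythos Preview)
Your plan for the first part is exactly what the paper does: substitute $\theta_\psi$ into each of the twisting-map identities \eqref{eqn:twistconditions}, expand the $\sigma$- and $\tau$-twisted products, and use the matched-pair axioms \eqref{eqn:compatibilities-matchedpair} (especially the fourth, ``commutation'' identity) to make the coalgebra legs on both sides agree, leaving precisely the scalar conditions \eqref{eqn:twistconditions-psi}.

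For the invertibility, your direct-verification strategy would work, but the paper avoids the heavy bookkeeping you anticipate by a change of viewpoint. Rather than expanding the nested $\tr,\tl,\ttr,\ttl$ in $\theta'\circ\theta_\psi$ and invoking \eqref{eqn:reverse-match} to collapse them, the paper observes that the candidate inverse \eqref{eqn:theta-invertible} is \emph{itself} a map of the same shape as $\theta_\psi$, but written in the reversed bicrossed product $U\ttb H$ of Lemma~\ref{lem:reverse}: if one writes $h\ot x$ as $y\tb t$ (with $y\in U$, $t\in H$) via the isomorphism $H\tb U\simeq U\ttb H$, then the proposed inverse becomes simply $\gamma(y,t)=\psi^{-1}(t_{(1)},y_{(1)})\,y_{(2)}\ttb t_{(2)}$. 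In these coordinates the verification of $\gamma\theta=\id$ and $\theta\gamma=\id$ is a two-line convolution cancellation $\psi\ast\psi^{-1}=\varepsilon$, with no nested actions to unwind. Your approach and the paper's are logically equivalent (indeed \eqref{eqn:reverse-match} is exactly what underlies the coordinate change), but the paper's packaging sidesteps the Sweedler-index explosion you flagged as the main obstacle.
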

\pf
The identity in \eqref{eqn:twistconditions} corresponding to $m_R=\tau\ast m_U$, becomes, using \eqref{eqn:compatibilities-matchedpair} with particular emphasis on the fourth identity:
	\begin{align*}
\tau(x_{(1)},y_{(1)})&\psi(h_{(1)},x_{(2)}y_{(2)})x_{(3)}y_{(3)}\tr h_{(2)}\ot x_{(4)}y_{(4)}\tl h_{(3)}\\
&=\psi(h_{(1)},y_{(1)})\psi(y_{(2)}\tr h_{(2)},x_{(1)})\tau(x_{(3)}\tl (y_{(4)}\tr h_{(4)}),y_{(6)}\tl h_{(6)})\\
&\qquad
x_{(2)}y_{(3)}\tr h_{(3)}\ot (x_{(4)}\tl (y_{(5)}\tr h_{(5)}))(y_{(7)}\tl h_{(7)}),	\\
&\stackrel{\eqref{eqn:compatibilities-matchedpair}}{=}\psi(h_{(1)},y_{(1)})\psi(y_{(2)}\tr h_{(2)},x_{(1)})\tau(x_{(3)}\tl (y_{(4)}\tr h_{(4)}),y_{(5)}\tl h_{(5)})\\
&\qquad
x_{(2)}y_{(3)}\tr h_{(3)}\ot (x_{(4)}\tl (y_{(6)}\tr h_{(6)}))(y_{(7)}\tl h_{(7)}),	\\
&=\psi(h_{(1)},y_{(1)})\psi(y_{(2)}\tr h_{(2)},x_{(1)})\tau(x_{(3)}\tl (y_{(4)}\tr h_{(4)}),y_{(5)}\tl h_{(5)})\\
&\qquad
x_{(2)}y_{(3)}\tr h_{(3)}\ot (x_{(4)}y_{(6)})\tr h_{(6)},	\\
&\stackrel{\eqref{eqn:compatibilities-matchedpair}}{=}\psi(h_{(1)},y_{(1)})\psi(y_{(2)}\tr h_{(2)},x_{(1)})\tau(x_{(2)}\tl (y_{(4)}\tr h_{(4)}),y_{(3)}\tl h_{(3)})\\
&\qquad
x_{(3)}y_{(5)}\tr h_{(5)}\ot (x_{(4)}y_{(6)})\tr h_{(6)}.
	\end{align*}
Similarly, the identity in \eqref{eqn:twistconditions} corresponding to $m_A=\sigma\ast m_H$ is:
	\begin{align*}
\sigma(h_{(1)},t_{(1)})&\psi(h_{(2)}t_{(2)},x_{(1)})x_{(2)}\tr h_{(3)} t_{(3)}\ot x_{(3)}\tl h_{(4)}t_{(4)}\\
&=\psi(h_{(1)},x_{(1)})\psi(t_{(1)},x_{(4)}\tl h_{(4)})\sigma(x_{(2)}\tr h_{(2)},(x_{(5)}\tl h_{(5)})\tr t_{(2)})\\
&\qquad (x_{(3)}\tr h_{(3)})((x_{(6)}\tl h_{(6)})\tr t_{(3)})\ot (x_{(7)}\tl  h_{(7)})\tl t_{(4)}\\
&\stackrel{\eqref{eqn:compatibilities-matchedpair}}{=}
\psi(h_{(1)},x_{(1)})\psi(t_{(1)},x_{(3)}\tl h_{(3)})\sigma(x_{(2)}\tr h_{(2)},(x_{(4)}\tl h_{(4)})\tr t_{(2)})\\
&\qquad x_{(5)}\tr h_{(5)}t_{(3)}\ot x_{(6)}\tl h_{(6)}t_{(4)}.
	\end{align*}
Next we check \eqref{eqn:theta-invertible}. Notice that, if we write $h\ot x=y\tb t$, $y\in U, t\in H$, then this is $\theta^{-1}(h,x)=\gamma(y, t)$, for 
\[
\gamma(y, t)=\psi^{-1}(t_{(1)},y_{(1)})y_{(2)}\ttb t_{(2)}.
\] 
Indeed, if we identify $U\ot H=H\ttb H$, then we have that
\begin{align*}
	\gamma\theta(x,h)&=\psi(h_{(1)}, x_{(1)})\gamma(x_{(2)}\tb h_{(2)})=\psi(h_{(1)}, x_{(1)})\psi^{-1}(h_{(2)},x_{(2)})x_{(3)}\ttb h_{(3)}=x\ot h,\\
	\theta\gamma(h,x)&=\theta\gamma(y,t)=\psi^{-1}(t_{(1)}, y_{(1)})\theta(y_{(2)}\ttb t_{(2)})=\psi^{-1}(t_{(1)}, y_{(1)})\psi(t_{(2)},y_{(2)}) y_{(3)}\tb t_{(3)}\\
	&=y\tb t=h\ot x.
\end{align*}
This ends the proof.
\epf

By \eqref{eqn:twistconditions-psi}, if we assume that the values of $\sigma,\tau$ are known, then it is enough to define $\psi$ on the generators of $U\ot H$ and extend it via: 
\begin{align*}
	\psi(ht,xy)=\sigma^{-1}&(h_{(1)},t_{(1)})\tau^{-1}(x_{(1)},y_{(1)})\psi(h_{(2)},y_{(2)})\psi(y_{(3)}\tr h_{(3)},x_{(2)})\\
	&\psi(t_{(2)},y_{(6)}\tl h_{(6)})\psi((y_{(7)}\tl h_{(7)})\tr t_{(3)},x_{(3)}\tl(y_{(4)}\tr h_{(4)}))\\
	&\quad \tau((x_{(4)}\tl (y_{(5)}\tr h_{(5)}))\tl((y_{(8)}\tl h_{(8)})\tr t_{(4)}),y_{(9)}\tl (h_{(9)}t_{(5)}))\\
	&\quad \quad \sigma((x_{(5)}y_{(10)})\tr h_{(10)},((x_{(6)}y_{(11)})\tl h_{(11)})\tr t_{(6)}).
\end{align*}
In general, the determination of the values of a Hopf cocycle $\sigma$ is a hard work; nevertheless there instances in which task has been performed and characterized, see e.g.~\cite{gam,gs}.


We the conclude the subsection with a useful remark.

\begin{remark}\label{rem:psi-invertible}
	When one of the cocycles $\sigma,\tau$ is trivial, then \eqref{eqn:twistconditions-psi} become simpler and we can deduce further properties of $\psi$:
	\begin{enumerate}[leftmargin=*]
\item[(a)] Assume that $\tau=\eps$. If the antipode $S=S_U$ is invertible, then the first equation in \eqref{eqn:twistconditions-psi} is
\[
\psi(h,xy)=\psi(h_{(1)},y_{(1)})\psi(y_{(2)}\tr h_{(2)},x)
\]
and then $\psi$ is convolution-invertible, with 
\[
\psi^{-1}(h,x)=\psi(x_{(1)}\tr h,S^{-1}(x_{(2)})).
\]
Indeed, let $\varphi(h,x)=\psi(x_{(1)}\tr h,S^{-1}(x_{(2)}))$. Then
\begin{align*}
	\psi\ast\varphi(h,x)=\psi(h_{(1)},x_{(1)})\psi(x_{(2)}\tr h_{(2)},S^{-1}(x_{(3)}))=\psi(h,S^{-1}(x_{(2)})x_{(1)})=\eps(h)\eps(x).
\end{align*}
On the other hand, letting $y=S^{-1}(x)$: 
\begin{align*}
	\varphi\ast\psi(h,x)&=\psi(x_{(1)}\tr h_{(1)},S^{-1}(x_{(2)}))\psi(h_{(2)},x_{(3)})=\psi(S(y_{(3)})\tr h_{(1)},y_{(2)})\psi(h_{(2)},S(y_{(1)}))\\
	&=\psi(S(y_{(5)})\tr h_{(1)},y_{(2)})\psi(y_{(3)}\tr (S(y_{(4)})\tr h_{(2)}),S(y_{(1)}))\\
	&=\psi((S(y_{(3)})\tr h)_{(1)},(y_{(2)})_{(1)})\psi((y_{(2)})_{(2)}\tr (S(y_{(3)})\tr h)_{(2)},S(y_{(1)}))\\
	&=\psi(S(y_{(3)})\tr h,S(y_{(1)})y_{(2)})=\eps(h)\eps(y)=\eps(h)\eps(x).
\end{align*}

\item[(b)] On the other hand, if $\sigma=\eps$ and $S_H$ is invertible, then $\psi$ is invertible. The second equation in \eqref{eqn:twistconditions-psi} becomes
\[
\psi(hk,x)=\psi(h_{(1)},x_{(1)})\psi(k,x_{(2)}\tl h_{(2)})
\]
and we get that:
\[
\psi^{-1}(h,x)=\psi(S_H(h_{(2)}),x\tl h_{(1)}).
\]
In this setting, the inverse $S^{-1}$ of $S=S_H$ is used in the verification, arguing as in the previous case. Indeed, if we now set $\varphi(h,x)=\psi(S(h_{(2)}),x\tl h_{(1)})$ it is straightforward to check that $\psi\ast\varphi=\eps$, while, for $k=S^{-1}(h)$:
\begin{align*}
	\varphi\ast\psi(h,x)&=\psi(S(h_{(2)}),x_{(1)}\tl h_{(1)})\phi(h_{(3)},x_{(2)})=\psi(k_{(2)},x_{(1)}\tl S^{-1}(k_{(3)}))\psi(S^{-1}(k_{(1)}),x_{(2)})\\
	&=\psi(k_{(2)},x_{(1)}\tl S^{-1}(k_{(5)})))\psi(S^{-1}(k_{(1)}),x_{(2)}\tl (S^{-1}(k_{(4)}) y_{(3)}))\\
	&=\psi(k_{(2)},(x\tl S^{-1}(k_{(4)}))_{(1)}))\psi(S^{-1}(k_{(1)}),(x\tl S^{-1}(k_{(4)}))_{(2)}\tl y_{(3)})\\
	&=\psi(k_{(2)}S^{-1}(k_{(1)}),x\tl S^{-1}(y_{(3)}))=\eps(k)\eps(x)=\eps(h)\eps(x).
\end{align*} 
	\end{enumerate}
\end{remark}


\subsection{The left bicrossed product Hopf algebra}\label{subsec:new}
We recall the notation and results on cogroupoids from \S \ref{sec:cogroupoids}; in particular the subcogroupoid $\C_H\subset \C$ from Lemma \ref{lem:subco} associated to a cogroupoid $\C$, an object $X\in\ob\C$ and a sub Hopf algebra $H\subseteq \C(X,X)$.

\begin{theorem}\label{thm:bicrossedtwist2}
Let $(H,U,\tr,\tl)$ be a matched pair of Hopf algebras. 

	\begin{enumerate}
		\item Let $L, Q$ be Hopf algebras and  let $A$, resp.~$R$, be an $(L,H)$-Galois object, resp.~$(Q,U)$-Galois object. Assume there is a twisting map $\theta\colon R\ot A\to A\ot R$ such that \eqref{eqn:diagram-galois} commutes. Then 
		the associated left Hopf algebra $F=L(A\#_\theta R,H\tb U)$ is a bicrossed product $L\btb Q$ for some actions $\btr \colon Q\ot L\to L$ and $\btl \colon Q\ot L\to Q$ making the following diagram commute:
			\begin{align}\label{eqn:diagram-galois-left}
		\xymatrix{ R\ot A \ar[r]^\theta  \ar[d]^{ \lambda_{R\ot A}} & A\ot R \ar[d]^{\lambda_{A\ot R}}\\
			Q\ot L\ot  R\ot A  \ar[r]^{\ \omega_{\btb}\ot\theta}&  L\ot Q\ot A \ot R}
		\end{align}	
		where $\lambda_{A\ot R}$ and $\lambda_{R\ot A}$ denote the diagonal left coactions induced by those of $A$ and $R$.
		\item Let $F$ be a Hopf algebra such that there exists an $(F,H\tb U)$-bi-Galois object. Then $F\simeq L\btb Q$ for some Hopf algebras $L$, $Q$ as above.
	\end{enumerate} 
\end{theorem}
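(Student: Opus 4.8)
The plan is to combine Theorem \ref{thm:bicrossedtwist1} with the cogroupoid machinery recalled in \S\ref{sec:cogroupoids}, in the spirit of how Schauenburg \cite{sch99} and Bichon \cite{bic} handle bi-Galois objects over tensor products. First, for part (1), I would start from the right $H\tb U$-Galois object $J=A\#_\theta R$ and form its associated left Hopf algebra $F=L(J,H\tb U)=(J\otimes J^{\op})^{\co H\tb U}$, which makes $J$ into an $F$-$H\tb U$-bi-Galois object. The key observation is that $J$ sits inside a connected cogroupoid: by \cite[Theorem 3.11]{bic} there is a connected cogroupoid $\C$ with objects $X,Y$ such that $\C(X,X)=H\tb U$, $\C(Y,Y)=F$ and $\C(X,Y)=J$ (up to the appropriate side conventions). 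Now $H$ and $U$ are Hopf subalgebras of $H\tb U$, so Lemma \ref{lem:subco} produces the subcogroupoids $\C_H$ and $\C_U$ of $\C$. By \eqref{eqn:cotensor} and the description of $A=J^{\co U}$, $R=J^{\co H}$ from the proof of Theorem \ref{thm:bicrossedtwist1}(3), one identifies $\C_H(X,Y)\simeq A$ and $\C_U(X,Y)\simeq R$, whence $L:=\C_H(Y,Y)$ and $Q:=\C_U(Y,Y)$ are exactly the left Hopf algebras $L(A,H)$ and $L(R,U)$, and $A$ is an $(L,H)$-bi-Galois object, $R$ a $(Q,U)$-bi-Galois object.

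The heart of the argument is then to show that $F$ factors through $L$ and $Q$. One shows that the inclusions $\C_H(Y,X)$ and $\C_U(Y,X)$ — equivalently $L$ and $Q$ viewed inside $F=\C(Y,Y)$ — generate $F$, with the multiplication $L\otimes Q\to F$ bijective. For bijectivity I would use the cotensor isomorphism \eqref{eqn:bicom-iso} applied to $\C(Y,Y)\simeq \C(Y,X)\square_{\C(X,X)}\C(X,Y)$, together with the factorization $H\tb U=H\otimes U$ at the coalgebra level and the fact that $J=A\#_\theta R=A\otimes R$ at the vector-space level; concretely, the map $\kappa_{H\tb U}$ computed in \eqref{eqn:inverse-can-bicrossed} expresses the left coaction $\lambda_J(a\#r)=(a\#r)_{(0)}\otimes\kappa(\ldots)$ in a form that visibly lands in (a copy of) $L\otimes Q$ tensored with $J$. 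Having the factorization $F=L\cdot Q$ with $L\otimes Q\xrightarrow{\sim}F$, Majid's theorem \cite[Theorem 7.2.3]{maj} yields a matched pair structure $(\btr,\btl)$ on $(L,Q)$ and $F\simeq L\btb Q$. The commutativity of diagram \eqref{eqn:diagram-galois-left} is then extracted by transporting \eqref{eqn:diagram-galois} across these identifications: $\lambda_{A\otimes R}$ is the diagonal left coaction, and the compatibility of $\theta$ with the right $H\tb U$-coaction (Lemma \ref{lem:bicrossed}) together with the colinearity of the canonical map forces $\theta$ to intertwine $\omega_{\btb}$ on the $Q\otimes L$ side with $\omega_{\tb}$ on the $H\otimes U$ side, which is precisely \eqref{eqn:diagram-galois-left}.

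For part (2), given a Hopf algebra $F$ admitting an $(F,H\tb U)$-bi-Galois object $J$, Theorem \ref{thm:bicrossedtwist1}(3) already writes $J\simeq A\#_\theta R$ for some right $H$-Galois object $A$, right $U$-Galois object $R$ and twisting map $\theta$; since $J$ is also left $F$-Galois, the uniqueness part of Schauenburg's theorem \cite[Theorem 3.5]{sc1} identifies $F\simeq L(J,H\tb U)$, and part (1) then gives $F\simeq L\btb Q$ with $L=L(A,H)$, $Q=L(R,U)$. The main obstacle I anticipate is the bookkeeping in the bijectivity of $L\otimes Q\to F$ and in verifying that the matched-pair compatibilities \eqref{eqn:compatibilities-matchedpair} for $(\btr,\btl)$ do hold — i.e. that the abstract factorization from \cite[Theorem 7.2.3]{maj} is genuinely compatible with the cogroupoid/subcogroupoid structure; this is where one must be careful that $\C_H$ and $\C_U$ interact inside $\C$ the right way, and it is the step where a clean proof requires the explicit formula \eqref{eqn:inverse-can-bicrossed} rather than a purely formal diagram chase.
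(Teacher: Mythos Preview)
Your proposal is correct and follows essentially the same route as the paper: set up a connected cogroupoid $\C$ with $\C(X,X)=H\tb U$ and $\C(Y,X)=A\#_\theta R$, pass to the subcogroupoids $\C_H,\C_U$ via Lemma~\ref{lem:subco} to recover $A,R,L,Q$, prove bijectivity of $L\otimes Q\to F$ by a chain of cotensor identifications (the paper makes this explicit with a commutative diagram built from \eqref{eqn:bicom-iso}, \eqref{eqn:cotensor}, and \eqref{eqn:monoidal}), and then read off the form of the left coaction on $A\#_\theta R$ from the formula \eqref{eqn:inverse-can-bicrossed} to obtain \eqref{eqn:diagram-galois-left}; part (2) is exactly the uniqueness-plus-Theorem~\ref{thm:bicrossedtwist1}(3) argument you describe. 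The ``obstacle'' you flag about the matched-pair compatibilities is not an issue: once multiplication $L\otimes Q\to F$ is bijective between Hopf subalgebras, \cite[Theorem 7.2.3]{maj} automatically furnishes the actions $\btr,\btl$ satisfying \eqref{eqn:compatibilities-matchedpair}, so no separate verification is needed there.
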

\pf
(1) We use \cite[Theorem 2.11]{bic} to consider a cogroupoid $\C$ with two objects $X,Y$ such that 
\begin{align*}
\C(X,X)&=H\tb U, & \C(Y,X)&=A\#_\theta R,
\end{align*}
so that $H,U$ become Hopf subalgebras of  $\C(X,X)$ and the we have a bijective multiplication 
	\begin{align}\label{eqn:mult-bij}
	m\colon H\ot U\to \C(X,X).
\end{align} 
 Consider the subcogroupoids $\C_H,\C_U\subseteq \C$ as in Lemma \ref{lem:subco}; we have $\C_H(Y,X)\simeq A$, $\C_U(Y,X)\simeq R$, and $\C_H(Y,Y)\simeq L$, $\C_U(Y,Y)\simeq Q$ by uniqueness of left Hopf algebras in \cite[Theorem 3.5]{sc1}. 

We need to show that the multiplication map $m:\C_H(Y,Y)\ot \C_U(Y,Y)\to \C(Y,Y)$ is bijective. We will do so by fitting it into the following commutative diagram with bijective arrows, where $\square=\square_{\C(X,X)}$ and the bijectivity of each arrow corresponds to the label on top:
	\begin{align*}
\xymatrix{  \C_H(Y,Y)\ot \C_U(Y,Y)\ar@{<->}[d]_{\eqref{eqn:cotensor}}\ar[r]^m & \C(Y,Y)\ar@{<->}[dd]^{\eqref{eqn:bicom-iso}}\\
{\begin{matrix}
	\left(\C(Y,X)\square H\square\,\C(X,Y) \right) \ot \left(\C(Y,X)\square U\square\,\C(X,Y)\right)
	\end{matrix}}
\ar@{<->}[d]_{\eqref{eqn:monoidal}} && \\
{\begin{matrix}
\C(Y,X)\square (H\square\,\C(X,Y)) \ot (U\square\,\C(X,Y)) )
	\end{matrix}}
\ar@{<->}[dr]_{\eqref{eqn:monoidal}}&\C(Y,X)\square\,\C(X,X) \square\,\C(X,Y) \ar@{<->}[d]^{\eqref{eqn:mult-bij}}\\
&\C(Y,X)\square\left( (H\ot U)\square\,\C(X,Y) \right). & }
\end{align*}
This shows that $F$ is a bicrossed product $L\btb U$ of $L\simeq\C_H(Y,Y)$ and $Q\simeq\C_U(Y,Y)$. 

The commutation of \eqref{eqn:diagram-galois-left} will follow from the obvious left version of that of \eqref{eqn:diagram-galois} in Lemma \ref{lem:bicrossed}, provided  we show that the left $L\btb Q$-coaction on $A\#_\theta$ R has the appropriate form. We identify $L\simeq L(A,H)=(A\ot A^{\op})^{\co H}$ and $Q\simeq L(R,U)=(R\ot R^{\op})^{\co U}$, so that 
\[
\lambda_A(a)=a_{(0)}\ot \kappa_H(a_{(1)})=a_{(0)}\ot (a_{(1)})^{(1)}\ot (a_{(1)})^{(2)}
\]
and similarly $\lambda_R(r)=r_{(0)}\ot (r_{(1)})^{(1)}\ot (r_{(1)})^{(2)}$. 

In the same spirit, we identify $L\btb Q\simeq L(A\#_\theta R,H\tb U)=(A\#_\theta R\ot (A\#_\theta R)^{\op})^{\co H\tb U}$, and we have
\begin{align*}
\lambda_{A\#_\theta R}(a\# r)&=a_{(0)}\# r_{(0)}\ot \kappa_{H\tb U} (a_{(1)}\tb r_{(1)})\\
&\stackrel{\eqref{eqn:inverse-can-bicrossed}}{=}a_{(0)}\# r_{(0)}\ot \theta((r_{(1)})^{(1)}\ot (a_{(1)})^{(1)})\ot (a_{(1)})^{(2)}\# (r_{(1)})^{(2)}\\
&[(a_{(0)}\# 1\ot (a_{(1)})^{(1)}\#1)\ot (a_{(1)})^{(2)}\# 1][(1\# r_{(0)}\ot 1\# (r_{(1)})^{(1)})\ot 1\# (r_{(1)})^{(2)}]
\end{align*}
where the last product between the terms in square brackets $[,]$  is taken in the algebra
\[
A\#_\theta R\ot (A\#_\theta R)^{\op}\ot A\#_\theta R.
\]
This shows that the following diagram commutes:
\begin{align*}
\xymatrix{A \#_\theta R \ar[r]^-{\lambda_{A\#_\theta R}}   \ar[d]_{\lambda_{A\otimes R}} &  (A\#_\theta R\ot (A\#_\theta R)^{\op})^{\co H\tb U} \otimes A \#_\theta R \\
	(A\ot A^{\op})^{\co H} \otimes (R\ot R^{\op})^{\co U}\otimes A \otimes R. \ar[ur]_{\xi \otimes \id_{A\otimes R}} 
}
\end{align*}
Here $\xi$ is, as at the end of Subsection \ref{sub:twist}, the composition of the multiplication in $A\#_\theta R\ot (A\#_\theta R)^{\op}$ together with the inclusion map
$$\iota_A \otimes \iota_R : (A\ot A^{\op})^{\co H} \otimes (R\ot R^{\op})^{\co U} \to (A\#_\theta R\ot (A\#_\theta R)^{\op})^{\co H\tb U} \otimes (A\#_\theta R\ot (A\#_\theta R)^{\op})^{\co H\tb U}$$
where $\iota_A$ and $\iota_R$ are the canonical inclusions
\begin{align*}
\iota_A : (A\ot A^{\op})^{\co H} &\longrightarrow (A\#_\theta R\ot (A\#_\theta R)^{\op})^{\co H\tb U} & a \otimes b &\longmapsto a\#1 \otimes b\#1,\\ 
\iota_R : (R\ot R^{\op})^{\co U} &\longrightarrow (A\#_\theta R\ot (A\#_\theta R)^{\op})^{\co H\tb U} & r \otimes s &\longmapsto 1\#r \otimes 1\#s.
\end{align*}
Since we know that $L\btb Q\simeq (A\#_\theta R\ot (A\#_\theta R)^{\op})^{\co H\tb U}$, the map $\xi$ is a Hopf algebra isomorphism, and it follows in particular that $\lambda_{A\otimes R}$ is an algebra map, so the left analogue of Lemma \ref{lem:bicrossed} applies, and the diagram (\ref{eqn:diagram-galois-left}) commutes.

(2) This follows by the uniqueness of the left Hopf algebra associated to a right Galois object and the previous results. Indeed, if $J$ is a $(F,H\tb U)$-bi-Galois object, then $J=A\#_\theta R$ for some right Galois objects $A$ and $R$ over $H$ and $U$ respectively by Theorem \ref{thm:bicrossedtwist1}. 
If $L=L(A,H)$ and $Q=L(R,U)$ are the corresponding left Hopf algebras so that $A$ is $(L,H)$-bi-Galois and $R$ is $(Q,U)$-bi-Galois, then $J$ is $(L\btb Q,H\tb U)$-bi-Galois by (1) and thus $L\btb Q\simeq L(J,H\tb U) \simeq F$. 
	\epf

\begin{corollary}
With the notation of Theorem \ref{thm:bicrossedtwist2}, assume that $A$ and $R$ are cleft. 

Let $\psi\colon H\ot U\to k$ be such that $\theta=\theta_\psi$ as in \eqref{eqn:theta-psi}. If $\psi$ is invertible, then
\[
\omega_{\btb}(x,h)=\psi(h_{(1)},x_{(1)})\psi^{-1}(h_{(4)},x_{(4)})x_{(2)}\tr h_{(2)}\ot x_{(3)}\tl h_{(3)}.
\]
\end{corollary}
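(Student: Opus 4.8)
The plan is to compute the matched-pair actions $\btr$ and $\btl$ that Theorem~\ref{thm:bicrossedtwist2}(1) produces, in the cleft setting, by unwinding the commutativity of diagram~\eqref{eqn:diagram-galois-left}. Since $A\simeq {}_\sigma H$ and $R\simeq {}_\tau U$ are cleft, the left coactions $\lambda_A$ and $\lambda_R$ are explicitly controlled through the maps $\kappa_H$ and $\kappa_U$; when furthermore $\sigma$, $\tau$ are the given $2$-cocycles, the left Hopf algebras are $L\simeq H^\sigma$ and $Q\simeq U^\tau$, and the coactions are essentially the comultiplications of $H$ and $U$ twisted by the cocycles (cf.\ Subsection~\ref{subsec:cocycle}). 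The combinatorial heart is the formula \eqref{eqn:inverse-can-bicrossed} for $\kappa_{H\tb U}$, which expresses $\kappa_{H\tb U}(h\tb x)$ as $\theta(x^{(1)}\ot h^{(1)})\ot h^{(2)}\#x^{(2)}$ in terms of $\kappa_H,\kappa_U$ and $\theta=\theta_\psi$.

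First I would write down $\lambda_{A\#_\theta R}(a\#r)$ using the displayed computation in the proof of Theorem~\ref{thm:bicrossedtwist2}(1), namely $a_{(0)}\#r_{(0)}\ot \kappa_{H\tb U}(a_{(1)}\tb r_{(1)})$, and then substitute \eqref{eqn:inverse-can-bicrossed} together with $\theta=\theta_\psi$ from \eqref{eqn:theta-psi}. Next I would identify, inside $L\btb Q=(A\#_\theta R\ot(A\#_\theta R)^{\op})^{\co H\tb U}$, the component of the coaction that lands in $L\ot Q$; by the identification $\omega_{\btb}(x,h)=x_{(1)}\btr h_{(1)}\ot x_{(2)}\btl h_{(2)}$ and the diagram \eqref{eqn:diagram-galois-left}, reading off $\omega_{\btb}$ amounts to extracting how $\theta_\psi$ transports the diagonal left coaction $\lambda_{R\ot A}$ to $\lambda_{A\ot R}$. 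Since $\psi$ is assumed convolution invertible, the inverse twisting map $\theta_\psi^{-1}$ is given by \eqref{eqn:theta-invertible}; composing $\theta_\psi$ on one side and $\theta_\psi^{-1}$ on the other, the $\tr$, $\tl$ part survives untouched while the scalar factors assemble into $\psi(h_{(1)},x_{(1)})\psi^{-1}(h_{(4)},x_{(4)})$ multiplying $x_{(2)}\tr h_{(2)}\ot x_{(3)}\tl h_{(3)}$, which is exactly the claimed formula.

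Concretely, the identity to verify is
\[
\omega_{\btb}(x,h)=\psi(h_{(1)},x_{(1)})\,\psi^{-1}(h_{(4)},x_{(4)})\,x_{(2)}\tr h_{(2)}\ot x_{(3)}\tl h_{(3)},
\]
and the cleanest route is to check that the right-hand side, call it $\omega'$, makes diagram~\eqref{eqn:diagram-galois-left} commute when the left coactions of $A\simeq{}_\sigma H$ and $R\simeq{}_\tau U$ are inserted; then uniqueness of the matched-pair structure underlying $L\btb Q$ (equivalently, injectivity of $m\colon L\ot Q\to F$) forces $\omega_{\btb}=\omega'$. Using that $\lambda_{{}_\sigma H}$ and $\lambda_{{}_\tau U}$ are the comultiplications twisted by $\sigma^{\pm 1}$ and $\tau^{\pm 1}$ respectively, the $\sigma$ and $\tau$ factors that appear on both sides of the diagram cancel by the cocycle conditions \eqref{eqn:twistconditions-psi}, leaving precisely the $\psi\cdot\psi^{-1}$ pair of scalars attached to $\omega_{\tb}$. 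The computation runs parallel to the proof of Lemma~\ref{lem:cleft} and to Remark~\ref{rem:psi-invertible}.

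The main obstacle I anticipate is bookkeeping: keeping the Sweedler indices on $h$ and $x$ aligned through the twisted coactions and verifying that every stray $\sigma$, $\sigma^{-1}$, $\tau$, $\tau^{-1}$ cancels via the two identities in \eqref{eqn:twistconditions-psi} (and the fourth matched-pair compatibility in \eqref{eqn:compatibilities-matchedpair}), so that only the outermost $\psi(h_{(1)},x_{(1)})$ from $\theta_\psi$ and $\psi^{-1}(h_{(4)},x_{(4)})$ from $\theta_\psi^{-1}$ remain. No genuinely new idea beyond \eqref{eqn:inverse-can-bicrossed} and \eqref{eqn:theta-invertible} is needed; the argument is a careful diagram chase in the cleft situation.
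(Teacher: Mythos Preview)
Your overall strategy---read off $\omega_{\btb}$ from the commutativity of diagram~\eqref{eqn:diagram-galois-left} in the cleft case---is the paper's approach, but you overcomplicate the execution because of a misconception about the left coactions. In Subsection~\ref{subsec:cocycle}, the left $H^\sigma$-coaction on ${}_\sigma H$ and the left $U^\tau$-coaction on ${}_\tau U$ are \emph{just the comultiplications} of $H$ and $U$: the cocycles deform the products, not the coactions. So the $\sigma^{\pm1},\tau^{\pm1}$ factors you plan to cancel via~\eqref{eqn:twistconditions-psi} never appear, and no appeal to that lemma is needed.

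With $\lambda_A=\Delta_H$ and $\lambda_R=\Delta_U$, diagram~\eqref{eqn:diagram-galois-left} applied to $x\ot h$ (and using that $\tr,\tl$ are coalgebra maps together with the fourth identity in~\eqref{eqn:compatibilities-matchedpair}) reads
\[
\omega_{\btb}(x_{(1)},h_{(1)})\ot\psi(h_{(2)},x_{(2)})\,x_{(3)}\tr h_{(3)}\ot x_{(4)}\tl h_{(4)}
=\psi(h_{(1)},x_{(1)})\,x_{(2)}\tr h_{(2)}\ot x_{(3)}\tl h_{(3)}\ot x_{(4)}\tr h_{(4)}\ot x_{(5)}\tl h_{(5)}.
\]
Applying $\id\ot\id\ot\eps_H\ot\eps_U$ gives $\psi(h_{(2)},x_{(2)})\,\omega_{\btb}(x_{(1)},h_{(1)})=\psi(h_{(1)},x_{(1)})\,x_{(2)}\tr h_{(2)}\ot x_{(3)}\tl h_{(3)}$, and convolving with $\psi^{-1}$ on the right yields the claim directly. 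There is no need for $\theta_\psi^{-1}$ or~\eqref{eqn:theta-invertible}, nor for the indirect ``define $\omega'$ and invoke uniqueness'' route; and the formula~\eqref{eqn:inverse-can-bicrossed} for $\kappa_{H\tb U}$ was already consumed in the proof of Theorem~\ref{thm:bicrossedtwist2} to establish~\eqref{eqn:diagram-galois-left}, so you can simply quote the diagram here.
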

\pf
In this setting, $R\simeq U$ as $U$-comodules and $A\simeq H$ as $H$-comodules, so we may assume, without loss of generality, that $R=U$ and $A=H$ in \eqref{eqn:diagram-galois-left}. By following the arrows, we get
\begin{align*}
\omega_{\btb}(x_{(1)},h_{(1)}) &\ot \psi(h_{(2)},x_{(2)}) x_{(3)}\tr h_{(3)}\ot x_{(4)}\tl h_{(4)}  
\\
&=\psi(h_{(1)},x_{(1)}) x_{(2)}\tr h_{(2)}\ot x_{(3)}\tl h_{(3)}\ot x_{(4)}\tr h_{'(4)}\ot x_{(5)}\tl h_{(5)}.
\end{align*}
We apply $\id_H\ot\,\id_U\ot \eps_H\ot\eps_U$ and obtain
\begin{align*}
\psi(h_{(2)},x_{(2)})\omega_{\btb}(x_{(1)},h_{(1)})=\psi(h_{(1)},x_{(1)}) x_{(2)}\tr h_{(2)}\ot x_{(3)}\tl h_{(3)}
\end{align*}
from where the result follows.
\epf

\begin{remark}
	In the case   of a tensor product $E=H\ot U$,   \cite[Proposition 3.7]{sctaft} describes the Hopf-Galois objects  with $\theta$ arising from a skew pairing $\tau\colon Q\ot L\to k$, so that $L\btb Q\simeq L\tb_\tau Q$. In the general situation of Theorem \ref{thm:bicrossedtwist1} and Theorem \ref{thm:bicrossedtwist2}, we do not see how to reach such a simple and elegant statement as \cite[Proposition 3.7]{sctaft}, since it seems to us that there is no canonical bicrossed product $L\btb Q$ to start with.
	
\end{remark}




\section{A complete example}\label{sec:exbicrossed}

In this section we illustrate the previous results with an example  of a bicrossed product that does not fall into the classes of bicrossed products mentioned in Example  \ref{ex:basicbicrossed}.


\subsection{The Hopf algebra $E=H\tb U$}\label{sec:E}
Throughout the section, let $U, H$ be the graded pointed Hopf algebras given by the smash products (also called {\it bosonizations} in this context) $H=k[b,c]\# k\Z$, $\Z=\lg h\rg$ and $U=k[a]\# k\Z$, $\Z=\lg g\rg$. Here $k[a]$, $k[b,c]$ are polynomial algebras.
Each smash product is defined so that $hb=-bh$, $h c=-ch$, $ga=-ag$. 

The comultiplication is such that $g\in U, h\in H$ are group-like elements and
\begin{align}\label{eqn:comult}
	\Delta(a)&=a\ot1+g^2\ot a, & \Delta(b)&=b\ot1+h^2\ot b, & \Delta(c)&=c\ot1+h^2\ot c.
\end{align}

\begin{definition}\label{def:E}
Let  $E$ be the smash product $k[a,b,c]\# k\Z^{2}$, i.e.~the $k$-algebra generated by $a,b,c,h^{\pm1},g^{\pm1}$ with $gh=hg$ and commutation:
\begin{align}\label{eqn:relations-basic}
	ha=-ah, \quad h b=-bh, \quad h c=-ch, \qquad  ga=-ag, \quad g b=-cg, \quad g c=-bg,
\end{align}
together with the quadratic identities
\begin{align}\label{eqn:relations-quad}
	ab-ba&=0, & ac-ca&=0, & bc-cb&=0.
\end{align}
\end{definition}

It is easy to see that $E$ is a Hopf algebra with coalgebra structure given by that of $H\ot U$. Moreover, it also follows that $E$ factors through $H$ and $U$ via the canonical injections $\iota\colon H\hookrightarrow E$ and $j:U\hookrightarrow E$. Hence $E\simeq H\tb U$.

Next remark shows that the bicrossed product  $E\simeq H\tb U$  above is neither a smash product (both actions are nontrivial) nor it comes from a pairing between $H$ and $U$.
It also establishes some properties of the actions involved, that will become useful further on.

\begin{remark}\label{rem:actions}
	With the notation we make the following observations:
	\begin{enumerate}
		\item[(a)] 
		Both actions $\tl\colon U\ot H\to U$ and $\tr\colon U\ot H\to H$ are non-trivial as
		\[
		ah=-ha\Rightarrow a\tl h=-a, \qquad gb=-cg\Rightarrow g\tr b=-c. 
		\]
		\item[(b)] The bicrossed product $H\tb U$ does not come from a pairing. Indeed, for any pairing $\tau\colon U\ot H\to k$, we get, by \eqref{eqn:cocycle-def}:
		\begin{align*}
			g.b&= \tau(g,b_{(1)})b_{(2)}g\tau^{-1}(g,b_{(3)})=\tau(g,b)g+\tau(g,h^2)bg+\tau(g,h^2)h^2g\tau^{-1}(g,b).
		\end{align*}
		Hence $g.b \in k\{g,bg,h^2g\}$ while in $H\tb U$ we have $gb=-cg\notin k\{g,bg,h^2g\}.
		$
		\item[(c)] Observe  that $a\tr\_=0$ and $\_\tl b=0$, same for $c$. We write down the tables with the actions $\tr$ and $\tl$ in terms of the generators:
		\begin{align*}
			&
			\begin{tabular}{|c|c|c|c|}
				\hline
				$\tr$ & $b$ & $c$ & $h$ 	 \\
				\hline
				$a$ & $0$ & $0$ & $0$ 	 \\
				\hline
				$g$ & $-c$ & $-b$ & $h$ 	 \\
				\hline 	
			\end{tabular} 
			&
			\begin{tabular}{|c|c|c|c|}
				\hline
				$\tl$ & $b$ & $c$ & $h$ 	 \\
				\hline
				$a$ & $0$ & $0$ & $-a$ 	 \\
				\hline
				$g$ & $0$ & $0$ & $g$ 	 \\
				\hline 	
			\end{tabular}
		\end{align*}
		Furthermore, notice that both $H=\sum_{i\geq 0}H_i$ and $U=\sum_{j\geq 0}U_j$ are coradically graded Hopf algebras;  here $H_i=k[b^nc^m:n+m=i]\# k\Z$ and $U_j=k[a^j]\# k\Z$. 
		It is thus easy to check that $a\tr H=0$ and $U\tl b=U\tl b=0$, using \eqref{eqn:compatibilities-matchedpair}. Moreover, this extends to
		\begin{align}\label{eqn:action-zero}
			U_{>0}\tr H&=0, & U\tl H_{>0}&=0.
		\end{align}
	\end{enumerate}
\end{remark}

We now introduce some notation that will become handy to deal with examples.
\begin{notation}
	Let $H$, and $U$ as above and consider the decomposition $H=\sum_{i\geq 0}H_i$ and $U=\sum_{j\geq 0}U_j$ in Remark \ref{rem:actions} (c). 
	We shall write, for $f\in H_n$, $u\in U_n$:
	\begin{align}
		\label{eqn:Delta-rf}
		\Delta(f)=f\ot r(f)+\ell(f)\ot f+f_{\underline{(1)}}\ot f_{\underline{(2)}}, \qquad \Delta(u)=u\ot r(u)+\ell(u)\ot u+u_{\underline{(1)}}\ot u_{\underline{(2)}}
	\end{align}
	with $r(f),\ell(f)\in\lg h\rg$ such that 
	\[
	f_{\underline{(1)}}\ot f_{\underline{(2)}}=\Delta(f)-f\ot r(f)+\ell(f)\ot f\in \sum_{i=1}^{n-1}H_i\ot H_{n-i}.
	\] 
	Similarly,
	$r(u),\ell(u)\in\lg g\rg$ and $u_{\underline{(1)}}\ot u_{\underline{(2)}}\in \sum_{i=1}^{n-1}U_i\ot U_{n-i}$. 
\end{notation}




\subsection{Deformations} In this part we introduce two families of algebras, obtained by deforming the relations of the Hopf algebra $E$. 
\begin{definition}\label{def:Edeformations}
Let us fix $\alpha,\beta\in k$ and $\lambda\in k^\times$ scalars with the following restrictions:
\begin{align}\label{eqn:conditions-abl}
\alpha(1+\lambda^4)&=0, & \beta(1-\lambda^2)&=0.
\end{align}
\begin{enumerate}[leftmargin=*]
\item[(a)] Let  $E_{\alpha,\beta}^\lambda$ be the $k$-algebra generated by $a,b,c,h^{\pm1},g^{\pm1}$ with relations $gh=hg$ and:
\begin{align}\label{eqn:relations-basic-def}
	ha=-\lambda^2 ah, \quad h b=-bh, \quad h c=-ch, \qquad  ga=-ag, \quad g b=-\lambda^2 cg, \quad g c=-\lambda^2 bg,
\end{align}
together with the identities
\begin{align*}
	ab-\lambda^4ba&=\beta(1-g^2h^2), & ac-\lambda^4ca&=\beta(1-g^2h^2), & bc-cb&=\alpha(1-h^4).
\end{align*}
\item[(b)] Let  $A_{\alpha,\beta}^\lambda$ be the $k$-algebra generated by $a,b,c,h^{\pm1},g^{\pm1}$ with relations $gh=\lambda hg$ and \eqref{eqn:relations-basic-def}, together with the identities
\begin{align}\label{eqn:relations-quad-def}
	ab-\lambda^4ba&=\beta, & ac-\lambda^4ca&=\beta, & bc-cb&=\alpha.
\end{align}
\end{enumerate}
\end{definition} 

We remark that \eqref{eqn:conditions-abl} above forces $\alpha\beta=0$; moreover $\alpha=\beta=0$ whenever $\lambda\neq\pm1$ or $\lambda^4\neq -1$. 

\medspace

It is easy to see that $E_{\alpha,\beta}^\lambda$ is a Hopf algebra with comultiplication \eqref{eqn:comult}. 
Moreover, it can be checked that the algebras $A_{\alpha,\beta}^\lambda$ are $(E,E_{\alpha,\beta}^\lambda)$-biGalois objects. 
This is indeed the content of this part, as the result of applying the ideas in Sections \ref{sec:twistings} and \ref{sec:hopf-galois-bicrossed}.

We refer to Corollary \ref{cor:summarize} for a general picture summarizing our results.

\subsection{Skew $H\tb U$-pairings}\label{sub:exskew}
We now discuss the skew $H\tb U$-pairings for the matched pair of Hopf algebras $(H,U,\tl,\tr)$ in \S\ref{sec:E}.
We construct two such maps in Examples \ref{exa:tau0} and \ref{exa:tau1} and then show that these exhaust all the skew $H\tb U$-pairings in Proposition \ref{prop:all-skew}. Along the way we describe the corresponding Galois objects and left Hopf algebras.

We observe that in this setting, using \eqref{eqn:action-zero}, conditions \eqref{eqn:tauskew} become:
\begin{align}
	\label{eqn:tauskew-restricted}
	\tau(f,uu')&=\tau(f_{(1)},u')\tau(r(u')\tr f_{(2)},u), 
	\ &
	\tau(ff',u)&=\tau(f,u_{(1)})\tau(f',u_{(2)}\tl r(f)),
\end{align}for any $f\in H_s$, $f\in H_t$, $u\in U_n$, $u'\in U_m$, $s,t,n,m\geq 0$.

\medskip

We begin with a quick remark.

\begin{remark}\label{rem:quick}
	\begin{enumerate}[leftmargin=*]
		\item Any linear map $\tau\colon H\ot U\to k$ is convolution invertible if and only if it is invertible in the coradical $E_0=H_0\ot U_0\simeq k\lg g,h\rg$ of $E$: that is if and only if $\lambda\coloneqq\tau(g,h)\neq 0$. 
		\item Conditions \eqref{eqn:tauskew}  restricted to $H_0\ot U_0$ are equivalent to stating that  $\tau^h=\tau(h,-)\colon \lg g\rg\to k$ and $\tau^g=\tau(-,g)\colon \lg h\rg\to k$ are algebra maps and thus they are determined by $\lambda$.
	\end{enumerate}
\end{remark}

Our first example is rather straightforward and it does not lead to interesting consequences. 
We include it for completeness, see Proposition \ref{prop:all-skew} below.

\begin{example}\label{exa:tau0}
	For each $\lambda\neq 0$, set $\tau=\tau_{\lambda}\colon H\ot U\to k$ be the linear map so that
	\begin{align*}
		\tau(h^p,g^q)&=\lambda^{pq}, & \tau_{|H_i\ot U_j}&=0, \ i+j>0.
	\end{align*}
	Then $\tau$ is skew $H\tb U$-pairing. As well, $H\overset{\tau} \tb U\simeq E_{0,0}^\lambda$ and $H\#_{\theta_\tau} U\simeq A_{0,0}^\lambda$.
	
\pf	First, Remark \ref{rem:quick} shows that $\tau$ is convolution invertible and identities \eqref{eqn:tauskew} hold on $H_0\ot U_0$. Next, if either $f\in H_{>0}$ $f'\in H_{>0}$ or $u\in U_{>0}$, we have that $\tau(ff',u)=0$. On the other hand, the left hand side of the corresponding equation in \eqref{eqn:tauskew} is $\tau(f,u_{(1)})\tau(f',u_{(2)}\tl r(f))$ by \eqref{eqn:tauskew-restricted}. Again, if $f\in H_{>0}$, $f'\in H_{>0}$ or $t\in U_{>0}$, then this is also zero by definition. The other identity in \eqref{eqn:tauskew} follows similarly.
	
	Notice that the corresponding cocycle $\hat{\tau}$ is concentrated in the coradical $\Z^2$. We thus get that  the deformation $H\overset{\tau} \tb U\simeq (H\tb U)^{\hat{\tau}}$ becomes the $k$-algebra $E_{0,0}^\lambda$ in Definition \ref{def:Edeformations}.
	In turn, $H\#_{\theta_\tau} U\simeq { _{\hat{\tau}}\!(H^{}\tb U)}$ is the algebra $A_{0,0}^\lambda$ as stated. 
\epf
\end{example}

Next we produce a more involved pairing (which includes the previous one).

\begin{example}\label{exa:tau1}
	Fix $\beta\in k$ and $\xi\in \{\pm1\}$.
	Consider the linear map $\tau_\xi^{\beta}\colon H\ot U\to k$ given by:
	\begin{align}\label{eqn:tau:exa}
		\tau_\xi^\beta(b^rc^sh^p,a^ng^q)&=\delta_{r+s,n}(-1)^{qn}\xi^{pq}n!\beta^n,   
	\end{align}
	for every $r,s,n\in\N$, $p,q\in\Z$. We write $\tau_{\pm}^\beta\coloneqq \tau_{\pm1}^\beta$. Then $\tau=\tau^\beta_{\xi}$ is a skew $H\tb U$-pairing.
	
	It follows that $H\overset{\tau} \tb U\simeq E_{0,\beta}^\pm$ and $H\#_{\theta_\tau} U\simeq A_{0,\beta}^\pm$.

\begin{remark}
If $\beta=0$, then this is Example \ref{exa:tau0} with $\lambda=\xi$. In particular, we can unify Examples \ref{exa:tau0} and \ref{exa:tau1} in a single pairing $\tau_\lambda^\beta$, where $\lambda\in k^\times$ and $\beta\in k$ are subject to the second restriction in \eqref{eqn:conditions-abl}; namely $\beta=0$ if $\lambda\notin\{\pm1\}$.
\end{remark}
			
	\begin{proof}
		Observe first that $\tau_{|H_i\ot U_j}=0$ for $i\neq j$ and $\tau(g,h)=\xi=\pm 1$. In particular, it is convolution invertible.
		We start with the second identity \eqref{eqn:tauskew-restricted}: We proceed by induction on $\sigma\coloneqq s+t+n\geq 0$, for $f\in H_s$, $f\in H_t$, $u\in U_n$.
		
		The case $\sigma=0$ or $s=t=n=0$ follows since this defines algebra maps $\tau^h$ and $\tau^g$ as in Remark \ref{rem:quick}. Now assume $\sigma>0$. Then 
		$\tau(ff',u)=0$. As for the right hand side of the equation, we have two cases: $s+t=n$ and $s+t\neq n$. We start with the later. If $s=0$ (so $t\neq n$), we may assume $f\in\lg h\rg$ and then
		\[
		\tau(f,u_{(1)})\tau(f',u_{(2)}\tl r(f))=\tau(f,u_{(1)})\tau(f',u_{(2)}\tl f).
		\]
		If $n=0$ then $t>0$ and $\tau(f',-)=0$. 
		
		If $n>0$, then $\tau(f,u_{(1)})\tau(f',u_{(2)}\tl f)=\tau(f,\ell(u))\tau(f',u\tl f)=0$ since $\tau(-,u\tl f)=0$.
		Now, if $s>0$, then $\tau(f,-)=0$ and thus $\tau(f,u_{(1)})\tau(f',u_{(2)}\tl r(f))=0$.
		
		Let us now assume that $s+t=n$, $f=b^{s_1}c^{s_2}h^{p_1}$, $f'=b^{t_1}c^{t_2}h^{p_2}$, $u=a^ng^q$, with $s_1+s_2=s$, $t_1+t_2=t$ and $p_1,p_2,q\in\Z$. We have that
		\begin{align}\label{eqn:lefthand}
			\tau(ff',u)=(-1)^{p_1t}(-1)^{qn}\xi^{(p_1+p_2)q}n!\beta^n.
		\end{align}
		On the other hand,
		\begin{align*}
			\tau(f,u_{(1)})\tau(f',u_{(2)}\tl r(f))=\tau(f,(a^n)_{(1)}g^q)\tau(f',(a^n)_{(2)}\tl h^{p_1}g^q).
		\end{align*}
		The component of $\Delta(a^n)=(a^n)_{(1)}\ot (a^n)_{(2)}=\sum_{i= 0}^n\binom{n}{i}a^ig^{2(n-i)}\ot a^{n-i}$ in the right component $U_s\ot U_t$ is precisely $\binom{n}{s}a^{s}g^{2t}\ot a^t$. So the above equation is 
		\begin{align*}
			\binom{n}{s}\tau(f,a^{s}g^{2t}g^q)\tau(f',a^t\tl h^{p_1}g^q)&=\frac{n!}{s!t!}(-1)^{qs}\xi^{(2t+q)p_1} s!\beta^{s} (-1)^{p_1t}(-1)^{qt}\xi^{qp_2}t!\beta^t\\
			&=n!(-1)^{q(s+t)}\xi^{(p_1+p_2)q}(-1)^{p_1t}\beta^{s+t}
		\end{align*}
		which coincides with \eqref{eqn:lefthand}. As for the first identity, if $u=a^ng^q$, $u'=a^mg^r$ and $f=b^sc^th^p$, then the left hand side is
		\begin{align*}
			\tau(f,uu')&=(-1)^{qm}\tau(b^sc^th^p,a^{n+m}g^{q+r})=(-1)^{qm}(-1)^{(q+r)(n+m)}\delta_{s+t,n+m}\xi^{p(q+r)}(n+m)!\beta^{n+m}\\
			&=(-1)^{qn+rn+rm}\delta_{s+t,n+m}\xi^{p(q+r)}(n+m)!\beta^{n+m},
		\end{align*}
		while as $\Delta(f)=\Delta(b^s)\Delta(c^t)(h^p\ot h^p)=\sum_{i=0}^s\sum_{j=0}^t\binom{s}{i}\binom{t}{j} b^ic^jh^{2(s+t-i-j)+p}\ot b^{s-i}c^{t-j}h^p$, then we get that
		$\tau(f_{(1)},u')\tau(r(u')\tr f_{(2)},u)=\tau(f_{(1)},u')\tau(g^{r}\tr f_{(2)},u)$ equals:
		\begin{align*}
			\sum_{i=0}^s\sum_{j=0}^t&\binom{s}{i}\binom{t}{j}\delta_{i+j,m}(-1)^{rm}\xi^{pr}m!\beta^m (-1)^{r(i+j)}\delta_{s+t-i-j,n}(-1)^{qn}\xi^{pq}n!\beta^{n}\\
			&=\sum_{i=0}^s\sum_{j=0}^t\binom{s}{i}\binom{t}{j}\delta_{i+j,m}m!n!\delta_{s+t,n+m}(-1)^{qn+rn+rm}\xi^{p(q+r)}\beta^{n+m}.
		\end{align*}
		This coincides with the expression for the left hand side computed above as the coefficient $\sum_{i=0}^s\sum_{j=0}^t\binom{s}{i}\binom{t}{j}\delta_{i+j,m}$ counts the number of subsets of size $i+j=m$ in a set of size $s+t$ (by choosing $i$ from a subset of size $s$ and $j$ from the complement of size $t$), namely this coefficient is $\binom{s+t}{m}$. Now $\binom{s+t}{m}m!n!\delta_{s+t,n+m}=(m+n)!$ and both sides coincide. Thus the first part follows.

	Now, the corresponding cocycle $\hat{\tau}=\hat{\tau}_{\pm}^\beta\colon H\tb U\ot H\tb U\to k$ can be computed explicitly using \eqref{eqn:tau:exa}, via the definition in Lemma \ref{lem:tauskewcocycle}. In particular, it is concentrated on $U\ot H$, and, for our purposes, it is enough to determine its values on the generators $\{a,g\}\times \{b,c,h\}$. We easily get:
	\begin{align*}
		\hat{\tau}(a,b)&=\hat{\tau}(a,c)=\beta, & \hat{\tau}(g,h)&=\pm1, & \hat{\tau}(a,h)&=\hat{\tau}(g,b)=\hat{\tau}(g,b)=0.
	\end{align*}
	Therefore, we obtain that $H\overset{\tau} \tb U\simeq (H\tb U)^{\hat{\tau}}$ is the $k$-algebra $E_{0,\beta}^\pm$ from Definition \ref{def:Edeformations}.
	Indeed, this is a standard computation for the multiplication in $(H\tb U)^{\hat{\tau}}$, see \eqref{eqn:cocycle-def}:
	\begin{align*}
		a.b&=\sigma(a_{(1)},b_{(1)})a_{(2)}b_{(2)}\sigma^{-1}(a_{(3)},b_{(3)})=\sigma(a,b)1+\sigma(g^2,h^2)ab+\sigma(g^2,h^2)g^2h^2\sigma^{-1}(a,b)\\
		&=\beta+ab-\beta gh=\beta(1-g^2h^2).
	\end{align*}
	As $b.a=ba$, we obtain the first (deformed) quadratic relation for $E_{0,\beta}^\pm$. The others follow similarly.
	
	As well, we can analogously  describe the Hopf-Galois object $H\#_{\theta_\tau} U\simeq \,_{\hat{\tau}}\!H\tb U$, with multiplication \eqref{eqn:cocycle-def-galois}: we obtain the  
	$k$-algebra $A_{0,\beta}^\pm$.
	Notice that in this setting $b.a=ab$ and $a.b=\sigma(a_{(1)},b_{(1)})a_{(2)}b_{(2)}=\sigma(a,b)1+\sigma(g^2,h^2)ab=\beta+ab$.
		This concludes the example.
		\epf
\end{example}

\medbreak 

Next we show that these are indeed all the skew-parings for $H\tb U$. 

\begin{proposition}\label{prop:all-skew}
	Let $\tau\colon H\ot U\to k$ be a $H\tb U$-skew-paring. Then there is $\lambda\neq 0$ so that $\tau=\tau_\lambda$ or there is $\beta\in k$ so that  $\tau=\tau_\pm^\beta$. 
\end{proposition}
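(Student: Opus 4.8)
The plan is to use that, as noted right after Definition~\ref{def:skew-pairing}, a skew $H\tb U$-pairing is completely determined by its values on the generators $b,c,h^{\pm1}$ of $H$ and $a,g^{\pm1}$ of $U$ through the extension rule~\eqref{eqn:tau-extension}. Since $\tau_\lambda$ and $\tau_\pm^\beta$ have already been shown to be skew $H\tb U$-pairings in Examples~\ref{exa:tau0} and~\ref{exa:tau1}, it suffices to compute the relevant values $\tau(g,h)$, $\tau(h,a)$, $\tau(b,g)$, $\tau(c,g)$, $\tau(b,a)$, $\tau(c,a)$ for an arbitrary skew pairing $\tau$ and to check that they match one of these two families. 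This reduces the whole classification to a finite computation.

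First I would put $\lambda\coloneqq\tau(g,h)$, which is nonzero by Remark~\ref{rem:quick}\,(1), whence $\tau(h^p,g^q)=\lambda^{pq}$ by Remark~\ref{rem:quick}\,(2). The next step is to kill the ``off-diagonal'' generator values $\tau(b,g)=\tau(c,g)=\tau(h,a)=0$. Specializing the second identity in~\eqref{eqn:tauskew-restricted} to $u=g^q$ and using that $g^q\tl r(f)=g^q$ for every $f\in H$ (since $r(f)\in\lg h\rg$ and $\lg h\rg$ acts trivially on $\lg g\rg$ by $\tl$, as $g\tl h=g$) yields $\tau(ff',g^q)=\tau(f,g^q)\,\tau(f',g^q)$; together with $\tau(1,g^q)=\varepsilon(g^q)=1$ this says $\tau(-,g^q)\colon H\to k$ is an algebra map, and applying it to the relation $hb=-bh$ forces $2\lambda^q\tau(b,g^q)=0$, hence $\tau(b,g^q)=0$, and likewise $\tau(c,g^q)=0$. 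Symmetrically, specializing the first identity in~\eqref{eqn:tauskew-restricted} to $f=h^p$ and using $r(u')\tr h^p=h^p$ shows $\tau(h^p,-)\colon U\to k$ is an algebra map, and applying it to $ga=-ag$ gives $\tau(h^p,a)=0$.

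With these vanishings at hand, a handful of applications of~\eqref{eqn:tauskew} pin down the last two values. Expanding $\tau(hb,a)$ and $\tau(bh,a)$ with the second identity (using $\tau(h,a)=0$, $\tau(b,g^2)=0$ and $a\tl h=-a$) gives $\tau(hb,a)=-\lambda^2\tau(b,a)$ and $\tau(bh,a)=\tau(b,a)$, so the relation $hb=-bh$ forces $(\lambda^2-1)\tau(b,a)=0$. Expanding $\tau(b,ga)$ and $\tau(b,ag)$ with the first identity (using $\tau(b,g)=0$, $\tau(h^2,a)=0$, $\tau(h^2,g)=\lambda^2$ and $g\tr b=-c$) together with the relation $ga=-ag$ gives $\tau(b,a)=\lambda^2\tau(c,a)$, and the $b\leftrightarrow c$ variant of the same computation gives $\tau(c,a)=\lambda^2\tau(b,a)$. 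Hence either $\tau(b,a)=0$, in which case $\tau(c,a)=0$ as well and $\tau$ agrees on generators with $\tau_\lambda$, so $\tau=\tau_\lambda$; or $\tau(b,a)\neq0$, in which case $\lambda^2=1$ and $\beta\coloneqq\tau(b,a)=\tau(c,a)$, and then $\tau$ agrees on generators with $\tau_\pm^\beta$ (one checks from~\eqref{eqn:tau:exa} that $\tau_\pm^\beta(b,a)=\tau_\pm^\beta(c,a)=\beta$, $\tau_\pm^\beta(h,g)=\pm1$, and the remaining generator values are $0$), so $\tau=\tau_\pm^\beta$.

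The computations expanding $\tau(hb,a)$, $\tau(b,ga)$, etc.\ are routine once the matched-pair compatibilities are unwound, so I expect the only real point to be the second step: instead of running an induction on the coradical degree to kill the off-diagonal part, one observes that $\tau(-,g^q)$ and $\tau(h^p,-)$ are genuine algebra maps out of $H$, respectively $U$, after which the anticommutation relations $hb=-bh$ and $ga=-ag$ immediately force the off-diagonal generator values to vanish. (As elsewhere in this section, $\operatorname{char}k\neq2$ is used.)
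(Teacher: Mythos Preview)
Your argument is correct, and the core computations (pinning down $\lambda=\tau(h,g)$ and the relations among $\tau(b,a),\tau(c,a),\lambda$ by feeding the defining relations $hb=-bh$, $ga=-ag$ through~\eqref{eqn:tauskew}) are exactly those of the paper. The difference is in how the two proofs finish. The paper does not invoke the ``determined by generators'' principle; instead it proves directly that $\tau_{|H_0\ot U_n+H_n\ot U_0}=0$ for $n>0$, deduces $\tau_{|H_i\ot U_j}=0$ for $i\neq j$, and then in the nontrivial case carries out the full combinatorial verification $\tau(b^rc^sh^p,a^ng^q)=(-1)^{qn}\xi^{pq}n!\beta^n$ on basis elements, matching~\eqref{eqn:tau:exa}. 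Your route is shorter and more conceptual: once the generator values are matched to those of $\tau_\lambda$ or $\tau_\pm^\beta$, uniqueness from the remark after Definition~\ref{def:skew-pairing} finishes the argument in one line, bypassing the factorial/binomial bookkeeping entirely. Your observation that $\tau(-,g^q)$ and $\tau(h^p,-)$ are honest algebra maps (because $g^q\tl\lg h\rg=g^q$ and $\lg g\rg\tr h^p=h^p$) is also a clean way to kill the off-diagonal generator values; the paper reaches the same vanishing~\eqref{eqn:1-0} by a parallel but slightly less packaged argument. The trade-off is that your proof leans on the general ``determined by generators'' claim, which the paper asserts but does not spell out, whereas the paper's explicit basis computation is self-contained.
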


\pf
Let $\lambda=\tau(g,h)$. Then $\lambda\neq 0$ since $\tau$ is convolution invertible. 
Following \eqref{eqn:tauskew-restricted}, we get:
\begin{align*}
	\tau(bh,a)&=\tau(b,a), & \tau(hb,a)=-\lambda^2\tau(b,a).
\end{align*}
and we see that $\lambda^2=1$ or $\tau(b,a)=0$, since $bh=-hb$.
On the other hand, 
\begin{align*}
	\tau(b,ag)&=-\lambda^2\tau(c,a)=-\tau(c,a), & \tau(b,ga)=\tau(b,a).
\end{align*}
As $ag=-ga$ we get that $\tau(c,a)=\tau(b,a)$. In particular have that either
\begin{align*}
	\lambda^2&=1 \text{ and } \tau(c,a)=\tau(b,a) &\text{ or }&&	\tau(b,a)&=\tau(c,a)=0.
\end{align*}

Similarly, if $n>0$ and $p,q\in\Z$, then we easily deduce from \eqref{eqn:tauskew-restricted} that $\tau(h^p,a^ng^q)=\lambda^{pq}\tau(h^p,a^n)$ and $\tau(h^p,g^qa^n)=\lambda^{pq}\tau(h^p,a^n)$. As $ga=-ag$, this implies that $\tau(h^p,a^n)=0$. This idea can be repeated to show that, more generally:
\begin{align}\label{eqn:1-0}
	\tau_{|H_0\ot U_n+H_n\ot U_0}=0, \qquad n>0.
\end{align}
But now  \eqref{eqn:tauskew-restricted} implies $\tau_{|H_i\ot U_j}=0$ when $i\neq j$.

Hence we deduce that when $\tau(b,a)=\tau(c,a)=0$ we have that $\tau=\tau_{\lambda}$ as in Example \ref{exa:tau0}. 

Otherwise, we have that $\tau(g,h)=\lambda$ is such that $\lambda^2=1$. We set $\xi=\lambda\in\{1,-1\}$, $\beta=\tau(b,a)=\tau(c,a)$. 
We can use \eqref{eqn:tauskew-restricted} to check that, for $r+s=n\in\N$:
\begin{align}\label{eqn:factorial}
	\tau(b^n,a^n)=\tau(c^n,a^n)=\tau(b^rc^s,a^n)=n!\beta^n.
\end{align}
Fix now $f=f(b,c)\in k[b,c]$ of degree $n\in\N$ (notice $r(f)=1$, $\ell(f)=h^{2n}$) and $p,q\in \Z$. Then
\begin{align*}
	\tau(fh^p,a^ng^q)&=\tau(f,(a^n)_{(1)}g^q)\tau(h^p,(a^n)_{(2)}g^q)=\tau(f,a^ng^q)\tau(h^p,g^q)=\xi^{pq}\tau(f,a^ng^q)\\
	&=\xi^{pq}\tau(f_{(1)},g^q)\tau(g^q\tr f_{(2)},a^n)=\xi^{pq}\tau(h^{2n},g^q)\tau(g^q\tr f,a^n)=\xi^{pq}\xi^{2qn}\tau(f_q,a^n).
\end{align*}
Here $f_q=f(b_q,c_q)\in k[b,c]$, for $b_q=\begin{cases}
	b, & q \text{ even,} \\	c, & q \text{ odd,}
\end{cases}$ and $c_q=\begin{cases}
	c, & q \text{ even,} \\	b, & q \text{ odd.}\end{cases}$

Now, by \eqref{eqn:factorial}, we have $
\tau(fh^p,a^ng^q)=\xi^{pq}\xi^{2qn}\tau(f_q,a^n)=(-1)^{qn}\xi^{pq}n!\beta^n$.
Therefore $\tau=\tau_{\pm}^\beta$ is as in \eqref{eqn:tau:exa} in Example \ref{exa:tau1}, according to $\xi=\pm 1$. The lemma follows.
\epf

\subsection{Hopf-Galois objects}\label{subsec:examples-psi-twisting-cleft}
We now describe the Hopf-Galois objects over $E = H  \tb U$.

The cleft objects $A$ for $H$ that produce a non-trivial left Hopf algebra, namely $L(A,H)\not\simeq H$, are the (pairwise isomorphic) algebras $H_\alpha$,  $\alpha\neq 0$; here $H_\alpha$ is the $k$-algebra generated by $\{b,c,h^{\pm1}\}$ with relations
\begin{align}\label{eqn:H-alpha}
	hb&=-bh, & hc&=-ch, & bc-cb=\alpha.
\end{align}
The Hopf algebra $L_\alpha=L(H,H_\alpha)$ is the {\it deformation}
generated by $\{b,c,h^{\pm1}\}$ with relations
\begin{align}\label{eqn:L-alpha}
	hb&=-bh, & hc&=-ch, & bc-cb=\alpha(1-h^2).
\end{align}
In turn, there are no Hopf deformations of $U$, hence there are no cleft objects for $U$ in this sense. We shall look, however, into non-trivial cocycles for $U$ in \S\ref{sec:unrolled} at the end of this monograph, in a different setting. As well, we analyze the case $\alpha=0$ in Remark \ref{rem:more-twist} below.

For any $\alpha\in k$, the cocycle $\sigma_\alpha\colon H\ot H\to k$ which determines the deformation $H_\alpha$ is 
\begin{align}\label{eqn:sigma-full}
	\sigma_\alpha(b^nc^mh^p,b^rc^sh^q)=\delta_{n,s}\delta_{m,r}(-1)^m(-1)^{p(n+m)}n!m!\left(\frac{\alpha}{2}\right)^{n+m},
\end{align}
for $n,m,r,s\geq 0$, $p,q\in\Z$. We shall provide some insight for this formula on Remark \ref{rem:recipe}, whereas we refer to \cite{gam} for more details. 

We will be primarily interested in the following consequences for $\sigma=\sigma_\alpha$:
\begin{align}
	\begin{split}
		\label{eqn:sigma}
		\sigma_{|H_i\ot H_j}&=0, \ i\neq j, \qquad  \sigma_{|H_0\ot H_0}=\eps\ot\eps, \\
		\sigma(bh^p,ch^q)&=-\sigma(ch^p,bh^q)=(-1)^p\frac{\alpha}{2}, \qquad  \sigma(bh^p,bh^q)=\sigma(ch^p,ch^q)=0,	\end{split}
\end{align}
As well, we have that $\sigma$ is skew-linear with respect to  $\lg g\rg$, as:
\begin{align}\label{eqn:sigmaq} 
	\sigma(g^q\tr f, g^q\tr f')&=(-1)^{qn}\sigma(-,-), \qquad f,f'\in H_n.
\end{align}

\begin{remark}\label{rem:recipe}
	The recipe for the full computation of $\sigma$ is as follows: let us set $V=k\{b,c\}$, $\Z=\lg h\rg$:  consider the $\Z$-invariant map $\eta'\colon V\ot V\to k$:
	\begin{align*}
		\eta'(b,c)&=\alpha/2, &  \eta'(c,b)&=-\alpha/2, &  \eta'(b,b)=\eta'(c,c)=0
	\end{align*}
	and extend it to an $\eps$-derivation $\eta\colon H\ot H\to k$ via:
	\begin{align*}
		\eta(v_1h^p,v_2h^q)&=\eta(v_1,h^p\cdot v_2), \ v_1,v_2\in V,\, p,q\in\Z, &  \eta_{|H_i\ot H_j}=0, \  (i,j)\neq (1,1).
	\end{align*}
	Then $\sigma$  arises as the exponential $e^\eta=\sum\limits_{n\geq 0}\frac{1}{n!}\eta^{\ast n}$; here $\eta^{\ast n}$ stands for the $n$th power of the convolution product $\ast$, namely $\underbrace{\eta\ast \dots\ast \eta}_{n\text{ times}}$. The formula follows from the projection onto $V^{\ot n+m}$ of the iteration $\Delta^{(n+m-1)}(b^nc^m)$.
\end{remark}

Next we investigate the maps $\psi\colon H\ot U\to k$ that satisfy \eqref{eqn:twistconditions-psi}, in order to compute the corresponding twisting maps $\theta=\theta_\psi$. 

Notice right away that as one of the cocycles is trivial and both antipodes are invertible, then any such $\psi$ is convolution invertible, by Remark \ref{rem:psi-invertible}. Moreover, the bijectivity of the antipodes implies that $\theta_\psi$ is invertible as well.
Recall the notation $r(-)$, $\ell(-)$ in \eqref{eqn:Delta-rf}. Now, arguing as in \S\ref{sub:exskew} we use once again \eqref{eqn:action-zero} to simplify  \eqref{eqn:twistconditions-psi} into: \begin{align}\label{eqn:twistconditions-psi-restricted}
	\psi(f,uu')&=\psi(f_{(1)},u')\psi(r(u') \tr f_{(2)},u),\\
	\notag	\sigma(f_{(1)},f'_{(1)})\psi(f_{(2)}f'_{(2)},u)&=\psi(f_{(1)},u_{(1)})\psi(f'_{(1)},u_{(2)}\tl r(f_{(1)}))\sigma(r(u_{(2)})\tr f_{(2)},r(u_{(2)}) \tr f'_{(2)}),
\end{align}
for every $f\in H_s$, $f\in H_t$, $u\in U_n$, $u'\in U_m$, $s,t,n,m\geq 0$.

\medbreak

We obtain similar results as in \S\ref{sub:exskew}, though more restrictive; see Remark \ref{rem:more-twist} below.


\begin{lemma}\label{lem:all-psi}
	Fix $\zeta \in k$, with $\zeta^4=-1$. 
	\begin{enumerate}
		\item Let $\psi=\psi_\zeta\colon H\ot U\to k$ be the linear map
		\begin{align*}
			\psi(h^p,g^q)&=\zeta^{pq}, & \psi_{|H_i\ot U_j}=0, \ i+j>0.
		\end{align*}
		Then $\psi$ satisfies \eqref{eqn:twistconditions-psi} and $\theta_\zeta\coloneqq\theta_\psi\colon U\ot H_\alpha \to  H_\alpha\ot U$ is such that
		\begin{align*}
			\theta_\zeta(g\ot h)&=\zeta\,h\ot g, & \theta_\zeta(g\ot b)&=-\zeta^2c\ot g, & \theta_\zeta(g\ot c)&=-\zeta^2b\ot g, \\
			\theta_\zeta(a\ot h)&=-\zeta^2 h\ot a, & \theta_\zeta(a\ot b)&=\zeta^4 b\ot a, & \theta_\zeta(a\ot c)&=\zeta^4 c\ot a.
		\end{align*}
		\item Conversely, if $\psi\colon H\ot U$ is a  linear map satisfying \eqref{eqn:twistconditions-psi}, then $\psi=\psi_\zeta$.
	\end{enumerate}
It follows that $H_\alpha\#_{\theta_\zeta} U$ is $A_{\alpha,0}^\zeta$ and $L_\alpha \btb U\simeq E_{\alpha,0}^\zeta$.
\end{lemma}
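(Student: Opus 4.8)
The plan is to treat the three assertions in turn, using throughout that here the $U$-cocycle is trivial ($\tau=\eps$, since $U$ admits no nontrivial deformation) while the $H$-cocycle is $\sigma=\sigma_\alpha$, so that the defining equations \eqref{eqn:twistconditions-psi} reduce to the restricted form \eqref{eqn:twistconditions-psi-restricted}. For the direct implication I would first check that $\psi_\zeta$ satisfies the first equation in \eqref{eqn:twistconditions-psi-restricted}: since $\psi_\zeta$ is supported on the coradical $H_0\ot U_0$, this is a purely graded computation, formally identical to the verification that $\tau_\lambda$ is a skew pairing in Example \ref{exa:tau0}, and it does not involve $\sigma$. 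The substantial point is the second equation, which couples $\psi_\zeta$ with $\sigma_\alpha$. Using the grading, the decompositions \eqref{eqn:Delta-rf}, and \eqref{eqn:action-zero}, both sides vanish unless $f,f'$ are homogeneous of a common degree $n$ and $u=g^q\in U_0$; in that case the surviving left-hand term is $\sigma(f,f')$, while on the right only the splittings $\ell(f)\ot f$ and $\ell(f')\ot f'$ contribute, and since $\ell(f)=\ell(f')=h^{2n}$ and the action of $\lg g\rg$ on $\lg h\rg$ is trivial, the two $\psi$-factors give $\psi(h^{2n},g^q)^2=\zeta^{4nq}$ and the cocycle factor becomes $\sigma(g^q\tr f,g^q\tr f')=(-1)^{nq}\sigma(f,f')$ by \eqref{eqn:sigmaq}. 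Matching the two sides forces $\zeta^{4nq}(-1)^{nq}=1$ for all $n,q$, which holds precisely because $\zeta^4=-1$; this is exactly where the standing hypothesis enters. The table for $\theta_\zeta$ is then obtained by inserting $\psi_\zeta$ into \eqref{eqn:theta-psi} and evaluating on the six pairs in $\{a,g\}\times\{b,c,h\}$, reading the actions off the tables in Remark \ref{rem:actions}(c) and the coproducts off \eqref{eqn:comult}; for instance, for $a\ot h$ only the component $g^2\ot g^2\ot a$ of $\Delta^{(2)}(a)$ survives, giving $\psi(h,g^2)\,(g^2\tr h)\ot(a\tl h)=-\zeta^2\,h\ot a$.

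\textbf{Part (2).} For the converse I would mirror Proposition \ref{prop:all-skew}. Put $\zeta\coloneqq\psi(h,g)$; by Remark \ref{rem:psi-invertible} any $\psi$ solving \eqref{eqn:twistconditions-psi} is automatically convolution invertible (one cocycle is trivial and both antipodes are bijective), so $\zeta\neq0$. The restricted conditions then propagate, exactly as in the proof of Proposition \ref{prop:all-skew}, to show first that $\psi$ vanishes off $H_0\ot U_0$ and that $\psi(h^p,g^q)=\zeta^{pq}$, whence $\psi=\psi_\zeta$; and the degree-one instance $f=b$, $f'=c$, $u=g$ of the second equation, together with $\sigma_\alpha(b,c)=\alpha/2\neq0$ when $\alpha\neq0$ and \eqref{eqn:sigmaq}, reproduces the constraint $\zeta^4=-1$.

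\textbf{Concluding identifications.} Granting (1), the algebra $H_\alpha\#_{\theta_\zeta}U$ is generated by $b,c,h^{\pm1}$ subject to the relations \eqref{eqn:H-alpha} of $H_\alpha$, by $a,g^{\pm1}$ subject to the relation $ga=-ag$ of $U$, and by the cross relations encoded in the $\theta_\zeta$-table; reading these off and comparing with Definition \ref{def:Edeformations}(b) identifies the twisted tensor product with $A_{\alpha,0}^\zeta$ (note the quadratic relation becomes $ab=\zeta^4ba=-ba$, i.e.\ $\beta=0$ with $\zeta^4=-1$). For the associated left Hopf algebra, Theorem \ref{thm:bicrossedtwist2}(1) gives $L(H_\alpha\#_{\theta_\zeta}U,H\tb U)\simeq L_\alpha\btb U$ with $L_\alpha=L(H_\alpha,H)$ and the second factor $L(U,U)\simeq U$; I would then compute the twisting $\omega_{\btb}$ from the corollary following Theorem \ref{thm:bicrossedtwist2} with $\psi=\psi_\zeta$ (so that $\psi^{-1}(h^p,g^q)=\zeta^{-pq}$), obtaining its cross relations, which together with the internal relations \eqref{eqn:L-alpha} of $L_\alpha$ match the presentation of $E_{\alpha,0}^\zeta$ in Definition \ref{def:Edeformations}(a).

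\textbf{Main obstacle.} I expect the crux to be the second restricted condition in Part (1): it is the only place where $\psi_\zeta$ interacts with the nontrivial cocycle $\sigma_\alpha$, and verifying it cleanly requires organizing the bigraded pieces of $\Delta(f)\Delta(f')$ against $\Delta(u)$ so that all contributions except the one from the top splittings $\ell(f)\ot f$, $\ell(f')\ot f'$ drop out. Once this bookkeeping is done, the entire identity collapses to the single numerical condition $\zeta^4=-1$ via the sign rule \eqref{eqn:sigmaq}, which is both what makes the verification work and what the converse in Part (2) extracts.
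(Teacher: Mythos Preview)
Your approach is essentially the paper's own, and Part (1) matches it closely: the first restricted equation reduces to Example~\ref{exa:tau0}, and for the second you isolate the single nonvanishing contribution and reduce to the numerical identity $(-1)^{nq}\zeta^{4nq}=1$, exactly as the paper's equation \eqref{eqn:zeta4}.

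There is, however, a small logical slip in your Part (2). You write that the argument of Proposition~\ref{prop:all-skew} ``propagates'' to show directly that $\psi$ vanishes off $H_0\ot U_0$. It does not: what that proposition actually yields (once you observe that $\sigma_{|H_0\ot H + H\ot H_0}=\eps\ot\eps$, so its computations with an $H_0$-factor go through unchanged) is the \emph{dichotomy} either $\psi(b,a)=\psi(c,a)=0$, or $\zeta^2=1$ with $\psi(b,a)=\psi(c,a)$ possibly nonzero. The second branch is precisely the pairing $\tau_\pm^\beta$, which is \emph{not} supported on the coradical. To rule it out you must invoke the $\sigma$-dependent equation \emph{before} concluding that $\psi=\psi_\zeta$: taking $f=b$, $f'=c$, $u=g$ in \eqref{eqn:zeta4} gives $\sigma(b,c)=-\zeta^4\sigma(b,c)$, which for $\zeta^2=1$ forces $\alpha/2=-\alpha/2$, a contradiction since $\alpha\neq 0$. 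Only then are you left with the first branch, from which $\psi=\psi_\zeta$ and (again via \eqref{eqn:zeta4}) $\zeta^4=-1$. You have the right ingredient, but in the wrong order: the $f=b$, $f'=c$ instance is what eliminates the bad branch, not merely what pins down $\zeta$ after the fact.
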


\pf
The lemma is essentially analogous to Example \ref{exa:tau0}, combined with a partial Proposition \ref{prop:all-skew}. The current setting only forces the restriction $\zeta^4=-1$ on the map concentrated on degree zero. We sketch a proof, following the ideas in the cases just mentioned.

(1) The first equation in \eqref{eqn:twistconditions-psi-restricted} follows automatically as the corresponding \eqref{eqn:tauskew-restricted} in Example \ref{exa:tau0}.
As for the second, let $e,e'\in k[b,c]$ be homogeneous elements of degree $d\geq 0$, $r,s\in\Z$ and $u=g^q\in U_0$ (if $u\in U_{>0}$, then both sides are zero). Set $f=eh^r$, $f'=e'h^s$. Then this equation is equivalent to
\begin{align}\label{eqn:zeta4}
	\sigma(f,f')\psi(h^{r+s},u)=(-1)^{qd}\psi(h^{2d+r},g^q)\psi(h^{2d+s},g^q)\sigma(f,f'),
\end{align}
namely $1=(-1)^{qd}\zeta^{4qd}, \forall\,d\in\N$; which holds when (and only if) $\zeta^4=-1$.

As for (2), this is a consequence of the skew-linearity of $\sigma$ as in \eqref{eqn:sigmaq}. We start by imitating the first part of  Proposition \ref{prop:all-skew}. 
Indeed, let $\psi\colon H_\alpha\ot U\to k$ satisfying \eqref{eqn:twistconditions-psi-restricted}. As $\sigma_{|H_0\ot H+H\ot H_0}=\eps\ot\eps$, we can follow the first lines of loc.cit.~to conclude that $\zeta\coloneqq \psi(h,g)\neq 0$. As well, we get once again that either 
$\zeta^2=1$ and $\psi(b,a)=\psi(c,a)$ or else $\psi(b,a)=\psi(c,a)=0$. For this last case we deduce that $\tau=\tau_{\zeta}$ as in (1), as we have observed above that the identity $\zeta^4=-1$ is  necessary. 

Otherwise, $\tau(g,h)=\zeta$, with $\zeta^2=1$. We can look at equation \eqref{eqn:zeta4} in this context for $u=g$; which still represents the second condition in \eqref{eqn:twistconditions-psi-restricted}.
If we set $f=b$, $f'=c$ we obtain the contradiction $\sigma(b,c)=-\sigma(b,c)$. This shows (2).

In turn, recall that $Q=L(U,U)\simeq U$ and $L_\alpha=L(H_\alpha,H)$ is as in \eqref{eqn:L-alpha}.
In this setting we get that $L_\alpha \btb U$ becomes the $k$-algebra $E_{\alpha,0}^\zeta$ in Definition \ref{def:Edeformations}.
\epf

We obtain more examples if we let $\alpha=0$, as we return to the case in the previous section. We include them to make the twisting map explicit following the lines of this section.
\begin{remark}\label{rem:more-twist}
	If we choose $\alpha=0$, namely $H_\alpha=H$ is the trivial cleft object, then $\sigma=\eps$ is trivial as well and equations \eqref{eqn:twistconditions-psi} become \eqref{eqn:tauskew}; namely $\psi\colon H\ot U\to k$ is a $H\tb U$-skew-paring. Hence we recover Examples \ref{exa:tau0} and \ref{exa:tau1}, by setting $\psi_\lambda\coloneqq \tau_{\lambda}$ and $\psi_{\pm}^\beta\coloneqq \tau_{\pm}^\beta$. Namely:
	
	\begin{enumerate}[leftmargin=*]
		\item The first is an unrestricted $\psi=\psi_\lambda$, and the corresponding $\theta_\lambda$, from Lemma \ref{lem:all-psi}, as the condition $\lambda^4=-1$ comes from the fact that $\alpha\neq0$ therein. 
		As for the Galois objects, if we set $\theta=\theta_\lambda$, then $H \btb U\simeq H\tb U$ is the $k$-algebra $A_{0,0}^\lambda$.
		
		\item As for the second, if $\psi=\psi_{\pm}^\beta$, then the twisting map $\theta^\beta_\pm\coloneqq\theta_\psi\colon U\ot H \to  H\ot U$ is:
		\begin{align*}
			\theta^\beta_\pm(g\ot h)&=\pm h\ot g, & \theta^\beta_\pm(g\ot b)&=-c\ot g, & \theta^\beta_\pm(g\ot c)&=-b\ot g, \\
			\theta^\beta_\pm(a\ot h)&=-h\ot a, & \theta^\beta_\pm(a\ot b)&=\beta\,1\ot 1+b\ot a, & \theta^\beta_\pm(a\ot c)&=\beta\,1\ot 1+c\ot a.
		\end{align*} 
	\end{enumerate}
	The corresponding algebras $H\#_\theta U$ naturally coincide with those in Examples \ref{exa:tau0} and \ref{exa:tau1}. 
	In turn, if $\theta=\theta_\pm^\beta$, then $H \btb U\simeq E_{0,\beta}^{\pm}$.
	\end{remark}

\subsection{Conclusions} We summarize the results in this section in the following statement. Recall the definition of the algebras $E_{\alpha,\beta}^{\lambda}$ and $A_{\alpha,\beta}^\lambda$ be as in Definition \ref{def:Edeformations}. As well, recall the examples $\tau_{\lambda}$ and $\tau^{\beta}_\pm$ of skew-pairings $H\ot U\to k$ from Examples \ref{exa:tau0} and \ref{exa:tau1}, respectively.

We recall as well the Hopf cocycles $\sigma_\alpha\colon H\ot H\to k$ as in \eqref{eqn:sigma-full} and the corresponding deformations $H_\alpha$ in \eqref{eqn:H-alpha} and $L_\alpha$ in \eqref{eqn:L-alpha}; in particular $H_0=L_0=H$. Finally, we shall consider the maps $\psi_\zeta\colon H\ot U\to k$ as in Lemma \ref{lem:all-psi}, with $\zeta^4=-1$. 

\begin{corollary}\label{cor:summarize}
	Let $E=H\tb U$ be as in Definition \ref{def:E}.
\begin{enumerate}[leftmargin=*]
\item Let $\tau\colon H\ot U\to k$ be a skew-pairing; then $\tau=\tau_{\lambda}$ or $\tau=\tau^{\beta}_\pm$. 
	\begin{enumerate}[leftmargin=*]
	\item If $\tau=\tau_{\lambda}$, then $H\overset{\tau} \tb U\simeq E_{0,0}^\lambda$ and $H\#_{\theta_\tau} U\simeq A_{0,0}^\lambda$.
	\item If $\tau=\tau_{\beta}^\pm$, then $H\overset{\tau} \tb U\simeq E_{0,\beta}^\pm$ and $H\#_{\theta_\tau} U\simeq A_{0,\beta}^\pm$.
	\end{enumerate}	
\item Let $\psi\colon H\ot U\to k$ be a map satisfying \eqref{eqn:twistconditions-psi} for $\sigma=\sigma_\alpha$ and $\tau=\eps_U^{\ot2}$.
	\begin{enumerate}[leftmargin=*]
	\item If $\alpha\neq 0$, then $\psi=\psi_\zeta$. We get $L_\alpha \btb U\simeq E_{\alpha,0}^\zeta$ and $H_\alpha\#_{\theta_\zeta} U\simeq A_{\alpha,0}^\zeta$.
	\item If $\alpha=0$ then $\psi$ is a skew-pairing $\tau^{\beta}_{\lambda}$. 
	Hence $L_\alpha \btb U\simeq E_{0,\beta}^\lambda$ and $H_\alpha\#_{\theta} U\simeq A_{0,\beta}^\lambda$.
\end{enumerate}	
\end{enumerate}
\end{corollary}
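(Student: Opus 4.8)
\pf
The plan is simply to assemble the statement from the results already established in this section; each clause has appeared (in scattered form) above, and the only point requiring care is checking that the scalar constraints \eqref{eqn:conditions-abl} are consistent across the four cases.

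For Part (1), I would first invoke Proposition \ref{prop:all-skew}, which says that every skew $H\tb U$-pairing is $\tau_\lambda$ for some $\lambda\in k^\times$ or $\tau^\beta_\pm$ for some $\beta\in k$, and in the latter case $\lambda=\tau(g,h)=\pm1$, so the dichotomy matches \eqref{eqn:conditions-abl} with $\alpha=0$. Then Proposition \ref{prop:skewbicrossed} gives that $H\#_{\theta_\tau}U$ is an $H\overset{\tau}\tb U$-$(H\tb U)$-bi-Galois object. Finally I would quote the computations inside Examples \ref{exa:tau0} and \ref{exa:tau1}, where the cocycle $\hat{\tau}$ is evaluated on the generators $\{a,g\}\times\{b,c,h\}$ and the deformed products \eqref{eqn:cocycle-def} and \eqref{eqn:cocycle-def-galois} are matched with the defining relations of $E_{0,\beta}^\lambda$, resp.\ $A_{0,\beta}^\lambda$, from Definition \ref{def:Edeformations}. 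This yields (a) and (b).

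For Part (2) with $\alpha\neq0$, I would recall that $H_\alpha$ is a non-trivial cleft object for $H$ with cocycle $\sigma_\alpha$ (see \eqref{eqn:sigma-full}), while $U$ admits only the trivial cleft object, so by the cleft setting of Subsection \ref{sec:cleft} any twisting map making \eqref{eqn:diagram-galois} commute has the form $\theta_\psi$ with $\psi$ satisfying \eqref{eqn:twistconditions-psi} for $\tau=\eps_U^{\ot2}$. Lemma \ref{lem:all-psi} then forces $\psi=\psi_\zeta$ with $\zeta^4=-1$, computes $\theta_\zeta$ explicitly, and --- using Theorem \ref{thm:bicrossedtwist1}, Theorem \ref{thm:bicrossedtwist2} and the corollary following the latter, together with $L(H_\alpha,H)\simeq L_\alpha$ and $L(U,U)\simeq U$ --- identifies $L_\alpha\btb U\simeq E_{\alpha,0}^\zeta$ and $H_\alpha\#_{\theta_\zeta}U\simeq A_{\alpha,0}^\zeta$; here $\alpha\neq0$ forces $\beta=0$ and $\lambda^4=\zeta^4=-1$, consistently with \eqref{eqn:conditions-abl}. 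For $\alpha=0$ I would observe that $H_0=H$ and $\sigma_0=\eps$, so \eqref{eqn:twistconditions-psi} degenerates into \eqref{eqn:tauskew}: $\psi$ is then a skew $H\tb U$-pairing, namely some $\tau^\beta_\lambda$ by Proposition \ref{prop:all-skew}, and the identifications of $L_0\btb U=H\btb U$ and $H\#_\theta U$ with $E_{0,\beta}^\lambda$ and $A_{0,\beta}^\lambda$ are exactly those spelled out in Remark \ref{rem:more-twist}.

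The only (mild) obstacle is bookkeeping: one must check that the deformation algebras of Definition \ref{def:Edeformations} are genuinely \emph{isomorphic}, not merely targets of a surjection, to the cocycle-twisted products. This is automatic because every algebra in sight is a $2$-cocycle deformation of $H\tb U$, hence has underlying vector space $k[a,b,c]\#k\Z^2$, and the quadratic relations in Definition \ref{def:Edeformations} are precisely the images of the relations of $E$ under the new product; so it suffices to evaluate those images on the generators, which is exactly what the Examples and Lemma \ref{lem:all-psi} carry out. Tracking \eqref{eqn:conditions-abl} through the cases --- they yield $\alpha\beta=0$, and $\alpha=\beta=0$ unless $\lambda=\pm1$ or $\lambda^4=-1$ --- then completes the proof.
\epf
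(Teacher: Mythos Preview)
Your proposal is correct and follows essentially the same approach as the paper: the corollary is a summary statement, and both you and the paper prove it by citing Proposition \ref{prop:all-skew}, Examples \ref{exa:tau0} and \ref{exa:tau1}, Lemma \ref{lem:all-psi}, and Remark \ref{rem:more-twist} for the respective clauses. Your version is more explicit about the logical dependencies (invoking Proposition \ref{prop:skewbicrossed} and Theorems \ref{thm:bicrossedtwist1}--\ref{thm:bicrossedtwist2}) and adds a bookkeeping paragraph on why the identifications are isomorphisms rather than surjections, but this is elaboration rather than a different route.
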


In the language of Definition \ref{def:Edeformations}, we could further condense the information in the examples from this section by saying that algebras in the case $\beta\neq 0$ come from skew-pairings and the case $\alpha\neq 0$ comes from linear maps and cocycles in the cleft setting.

\pf
The first assertion in (1) is Proposition \ref{prop:all-skew}. Then (a) and (b) therein follow from the corresponding Examples \ref{exa:tau0} and \ref{exa:tau1}. In turn, (2)(a) is Lemma \ref{lem:all-psi} and (b) is Remark \ref{rem:more-twist}.
\epf

\section{Hopf-Galois objects over a semi-direct product Hopf algebra}\label{sec:semi}

We specialize the results of the previous section to the semi-direct product Hopf algebra case, i.e. when one of the actions in the matched pair is trivial. We will see that in that case, we can make Theorem \ref{thm:bicrossedtwist1} and Theorem \ref{thm:bicrossedtwist2} much more precise, at least when we assume that all the Galois objects over the Hopf algebra $U$ are cleft.

\subsection{General results}
The framework  in this section is that of a matched pair of Hopf algebras $(H,U,\triangleright, \triangleleft)$ with $\triangleleft$ trivial. The matched pair conditions become that $H$ is a left $U$-module algebra together with
\begin{align} \label{eqn:cocosemi} 
	x_{(1)} \otimes x_{(2)}\triangleright a = x_{(2)} \otimes x_{(1)}\triangleright a, \qquad x\in U, a \in H.
\end{align}
The resulting Hopf algebra is denoted $H\rtimes U$. As an algebra it is just a smash product as before, but to stress out the Hopf structure, we will call it a semi-direct product Hopf algebra. 

 In view of the above condition (\ref{eqn:cocosemi}), the most interesting semi-direct products (i.e.~those with $\triangleright$ non trivial) will occur with $U$ cocommutative, and hence in particular with $U$ having the property that any Galois object is cleft, an assumption that will be made in the main results of the section.

We begin with a  basic construction. This is certainly well-known, and the  straightforward proof is left to the reader.

\begin{lemma}\label{lem:twistedsmash}
	Let $U$ be a Hopf algebra, let $\sigma : U\otimes U \to k$ be a $2$-cocycle and let $A$ be a $U$-module algebra. Then the map
	\begin{align*}
	\theta : {_\sigma  U } \otimes A &\to A\otimes {_\sigma U} &
	x \otimes a &\mapsto x_{(1)}\cdot a\otimes x_{(2)}&
	\end{align*}
	is a twisting map. The resulting algebra $A\#{_\sigma U}$ is called a twisted smash product.\qed
\end{lemma}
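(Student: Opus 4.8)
The plan is to verify directly that the proposed map $\theta$ satisfies the three conditions in \eqref{eqn:twistconditions}, using that $\sigma$ is a $2$-cocycle and that $A$ is a $U$-module algebra. First I would recall that $_\sigma U$ has underlying coalgebra that of $U$, so the comultiplication appearing in the formula $\theta(x\ot a)=x_{(1)}\cdot a\ot x_{(2)}$ is the original one of $U$; the only subtlety is that the multiplication in $_\sigma U$ (and in $A$) is twisted, so all products appearing on the right-hand sides of \eqref{eqn:twistconditions} must be computed with the deformed products. The unit conditions $\theta(1\ot a)=a\ot 1$ and $\theta(x\ot 1)=1\ot x$ are immediate from $1\cdot a=a$, $x\cdot 1=\varepsilon(x)1$, and the fact that $1$ is the unit of $_\sigma U$ (which uses $\sigma(1,-)=\varepsilon=\sigma(-,1)$).

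For the first condition, $\theta\circ(m_{_\sigma U}\ot\id)=(\id\ot m_{_\sigma U})\circ(\theta\ot\id)\circ(\id\ot\theta)$, I would expand the left side: the product $x\cdot_\sigma y=\sigma(x_{(1)},y_{(1)})x_{(2)}y_{(2)}$ followed by $\theta$ gives $\sigma(x_{(1)},y_{(1)})\,(x_{(2)}y_{(2)})\cdot a\ot x_{(3)}y_{(3)}$; using that $A$ is a $U$-module, $(x_{(2)}y_{(2)})\cdot a=x_{(2)}\cdot(y_{(2)}\cdot a)$. On the right side, $(\id\ot\theta)$ then $(\theta\ot\id)$ then $(\id\ot m_{_\sigma U})$ produces $x_{(1)}\cdot(y_{(1)}\cdot a)\ot(x_{(2)}\cdot_\sigma y_{(2)})=x_{(1)}\cdot(y_{(1)}\cdot a)\ot\sigma(x_{(2)},y_{(2)})x_{(3)}y_{(3)}$. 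The two agree after matching Sweedler indices, since $\sigma(x_{(1)},y_{(1)})$ and the module action factor independently. For the second condition, $\theta\circ(\id\ot m_A)(x\ot a\ot a')$ should equal $(m_A\ot\id)\circ(\id\ot\theta)\circ(\theta\ot\id)(x\ot a'\ot a)$ --- wait, one must be careful with the order of tensor factors in \eqref{eqn:twistconditions}; following the stated formula, the left side is $x_{(1)}\cdot(a\cdot_\sigma a')\ot x_{(2)}$ (where here $A$ could itself be a twisted product, but in the lemma $A$ is just a $U$-module algebra with its given product), and using the module-algebra axiom $x_{(1)}\cdot(aa')=(x_{(1)}\cdot a)(x_{(2)}\cdot a')$ this becomes $(x_{(1)}\cdot a)(x_{(2)}\cdot a')\ot x_{(3)}$, which matches the iterated application of $\theta$ on the right.

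I expect no genuine obstacle here: the statement is explicitly flagged as well-known with a straightforward proof, and indeed it is a mild common generalization of the smash-product example (take $\sigma$ trivial) and the $2$-cocycle twist $_\sigma U$ (take $A=k$). The one point requiring minor care is bookkeeping: keeping track of which products are $\sigma$-twisted versus untwisted, and confirming that the associativity of $A\#{}_\sigma U$ really is equivalent to the conjunction of the module-algebra axioms for $A$ over $U$ and the $2$-cocycle identity for $\sigma$ --- this is exactly what \eqref{eqn:twistconditions} encodes once $\theta$ is substituted. Since the excerpt leaves this to the reader, I would simply remark that the verification of \eqref{eqn:twistconditions} is a direct computation combining the $2$-cocycle condition on $\sigma$ with the $U$-module algebra axioms on $A$, exactly as in the smash product Example in Subsection~\ref{sub:twist}, and that the associativity of $A\#{}_\sigma U$ follows.
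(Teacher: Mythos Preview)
The paper omits the proof entirely (``left to the reader''), so there is nothing to compare against; your direct verification of the conditions in \eqref{eqn:twistconditions} is the natural approach, and your treatment of the unit conditions and of the second condition (the one involving $m_A$) is correct.

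There is, however, a genuine gap in your check of the first condition (the one involving $m_{_\sigma U}$). You correctly compute the two sides as
\[
\sigma(x_{(1)},y_{(1)})\,x_{(2)}\cdot(y_{(2)}\cdot a)\otimes x_{(3)}y_{(3)}
\quad\text{and}\quad
\sigma(x_{(2)},y_{(2)})\,x_{(1)}\cdot(y_{(1)}\cdot a)\otimes x_{(3)}y_{(3)},
\]
but your claim that ``the two agree after matching Sweedler indices, since $\sigma$ and the module action factor independently'' is not valid: coassociativity lets you regroup iterated comultiplications, not permute them. What is actually needed is the compatibility
\[
\sigma(x_{(1)},y_{(1)})\,(x_{(2)}y_{(2)})\cdot a \;=\; \sigma(x_{(2)},y_{(2)})\,(x_{(1)}y_{(1)})\cdot a,
\]
which is precisely the ``twisted module'' condition from the theory of crossed products, and it does \emph{not} follow from the $2$-cocycle identity and the $U$-module algebra axioms alone when $U$ is non-cocommutative.

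This is harmless in context: the opening paragraph of \S\ref{sec:semi} explains that the interesting case (and the one used in Theorems~\ref{thm:galoissemidirect} and~\ref{thm:hopfgaloissemidirect}) has $U$ cocommutative, and under that hypothesis the Sweedler indices can indeed be permuted and your argument goes through verbatim. So either record explicitly that you are using cocommutativity of $U$ at this step, or isolate the displayed compatibility above as the extra hypothesis implicitly in force.
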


We need one more preparatory result. Again the proof is an immediate verification, that we leave to the reader.

\begin{lemma}\label{lem:compatsemi}
	Let $E = H\rtimes U$ be a semi-direct product Hopf algebra, let $\sigma : U\otimes U \to k$ be a $2$-cocycle and let $A$ be a $U$-module algebra and a right $H$-comodule algebra. Then the twisting map 	$\theta : {_\sigma  U } \otimes A \to A\otimes {_\sigma U}$ in Lemma \ref{lem:twistedsmash} makes Diagram (\ref{eqn:diagram-galois}) commute (with $R={_\sigma U})$ if and only if the right coaction $\rho_A : A \to A\otimes H$ is left $U$-linear. \qed 
\end{lemma}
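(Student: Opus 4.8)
The plan is to chase a general element $x\ot a\in R\ot A={}_\sigma U\ot A$ around the two composites in Diagram~\eqref{eqn:diagram-galois} and to compare the results in $A\ot R\ot H\ot U$. Note first that $\theta$ is already a twisting map by Lemma~\ref{lem:twistedsmash}, so the only issue is the commutativity of the square itself. Two simplifications make the computation short: since $\tl$ is trivial one has $\omega_{\tb}(x\ot h)=x_{(1)}\tr h\ot x_{(2)}$, and the $U$-comodule structure of $R={}_\sigma U$ is nothing but the comultiplication of $U$. In particular the cocycle $\sigma$ enters only the multiplication of ${}_\sigma U$, never its coalgebra or comodule structure, which is why it plays no role below and does not appear in the statement.

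First I would run the top--right path. Applying $\theta$ sends $x\ot a$ to $(x_{(1)}\cdot a)\# x_{(2)}$, and then the $E$-coaction $\rho^\#=\rho_{A\ot R}$ of Lemma~\ref{lem:bicrossed} gives
\[
(x_{(1)}\cdot a)_{(0)}\ot x_{(2)}\ot (x_{(1)}\cdot a)_{(1)}\ot x_{(3)},
\]
where $(x_{(1)}\cdot a)_{(0)}\ot(x_{(1)}\cdot a)_{(1)}=\rho_A(x_{(1)}\cdot a)$. Next the bottom--left path: the diagonal coaction $\rho_{R\ot A}$ sends $x\ot a$ to $x_{(1)}\ot a_{(0)}\ot x_{(2)}\ot a_{(1)}$, and applying $\theta\ot\omega_{\tb}$ yields
\[
(x_{(1)}\cdot a_{(0)})\ot x_{(2)}\ot (x_{(3)}\tr a_{(1)})\ot x_{(4)} .
\]
The diagram commutes precisely when these two expressions agree for all $x\in U$, $a\in A$.

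For the ``only if'' direction I would apply $\eps$ to the second and fourth tensor factors (the $R$- and $U$-slots): the bottom--left expression collapses to $x_{(1)}\cdot a_{(0)}\ot x_{(2)}\tr a_{(1)}$ and the top--right one to $\rho_A(x\cdot a)$, so that $\rho_A(x\cdot a)=x_{(1)}\cdot a_{(0)}\ot x_{(2)}\tr a_{(1)}$; this is exactly the statement that $\rho_A$ is a morphism of left $U$-modules, where $A\ot H$ carries the tensor-product action built from the given action on $A$ and the action $\tr$ on $H$. For the ``if'' direction I would conversely substitute $\rho_A(x_{(1)}\cdot a)=(x_{(1)})_{(1)}\cdot a_{(0)}\ot(x_{(1)})_{(2)}\tr a_{(1)}$ into the top--right expression; after relabelling by coassociativity it becomes $(x_{(1)}\cdot a_{(0)})\ot x_{(3)}\ot(x_{(2)}\tr a_{(1)})\ot x_{(4)}$, which differs from the bottom--left expression only in that the index sitting in the $R$-slot and the index acting on $a_{(1)}$ are interchanged. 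These two are identified by the extra semi-direct-product axiom~\eqref{eqn:cocosemi}, $y_{(1)}\ot y_{(2)}\tr a=y_{(2)}\ot y_{(1)}\tr a$, applied to the middle Sweedler component of $x$.

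The only real obstacle is the Sweedler bookkeeping: tracking how the three (resp.\ four) comultiplied copies of $x$ produced by the two paths line up, and recognising that reconciling them costs exactly one application of~\eqref{eqn:cocosemi}. Everything else, including the identification of the module structure on $A\ot H$ implicit in the statement, is immediate, and, as observed, the whole argument is insensitive to $\sigma$.
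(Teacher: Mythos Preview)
Your argument is correct and is exactly the routine verification the paper has in mind; the authors omit the proof entirely (``an immediate verification, that we leave to the reader''), so there is no alternative approach to compare against. Your identification of $U$-linearity of $\rho_A$ with the equation $\rho_A(x\cdot a)=x_{(1)}\cdot a_{(0)}\ot x_{(2)}\tr a_{(1)}$, your observation that $\sigma$ is irrelevant because only the comodule structure of ${}_\sigma U$ enters, and your use of~\eqref{eqn:cocosemi} to swap the middle Sweedler legs in the ``if'' direction are all exactly right.
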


\begin{theorem}\label{thm:galoissemidirect}
	Let $E = H\rtimes U$ be a semi-direct product Hopf algebra.
	\begin{enumerate}
		\item Let $\sigma : U\otimes U \to k$ be a $2$-cocycle and let $A$ be a right $H$-Galois object. Assume that $U$ acts on $A$ as a $U$-module algebra and that the right coaction $\rho_A : A \to A\otimes H$ is left $U$-linear. Then the twisted smash product $A\#{_\sigma U}$ is a right $E$-Galois object.
		\item Conversely, assuming that any right $U$-Galois is cleft, any right $E$-Galois object is isomorphic to a twisted smash product $A\#{_\sigma U}$ as above. 
	\end{enumerate}
\end{theorem}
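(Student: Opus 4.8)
The plan is to deduce both parts from Theorem~\ref{thm:bicrossedtwist1}, the two preparatory lemmas, and the standard structure theory of cleft comodule algebras. Part (1) is essentially immediate: $R\coloneqq {}_\sigma U$ is a right $U$-Galois object by Subsection~\ref{subsec:cocycle}, and the twisting map $\theta\colon {}_\sigma U\ot A\to A\ot {}_\sigma U$ of Lemma~\ref{lem:twistedsmash} built from the given $U$-module algebra structure on $A$ makes Diagram~\eqref{eqn:diagram-galois} commute precisely because $\rho_A$ is left $U$-linear, by Lemma~\ref{lem:compatsemi}. Hence Theorem~\ref{thm:bicrossedtwist1}(1) applies and $A\#_\theta R=A\#{}_\sigma U$ is a right $E$-Galois object.

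For part (2), let $J$ be a right $E$-Galois object. By Theorem~\ref{thm:bicrossedtwist1}(3) we may write $J\simeq A\#_\theta R$ with $A=J^{\co U}$ a right $H$-Galois object, $R=J^{\co H}$ a right $U$-Galois object, and $\theta\colon R\ot A\to A\ot R$ a twisting map for which Diagram~\eqref{eqn:diagram-galois} commutes. By hypothesis $R$ is cleft; fix a cleaving $j_R\colon U\to R$ with associated $2$-cocycle $\sigma$ on $U$, so $R\simeq {}_\sigma U$ as right $U$-comodule algebras. The one thing left to prove is that $\theta$ must coincide with the twisting map of Lemma~\ref{lem:twistedsmash} for a suitable $U$-module algebra structure on $A$; once that is established, Lemma~\ref{lem:compatsemi} forces $\rho_A\colon A\to A\ot H$ to be left $U$-linear and we are done.

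To carry this out I would push the $E$-coaction $\rho^\#$ on $A\#_\theta R$ forward along the Hopf algebra projection $\pi_U=\eps_H\ot\id_U\colon E\to U$; since $\tl$ is trivial $\pi_U$ is an algebra map, so $(\id\ot\pi_U)\circ\rho^\#$ makes $A\#_\theta R$ a right $U$-comodule algebra. Because $R$ has trivial $U$-coinvariants one gets $(A\#_\theta R)^{\co U}=A\#1\simeq A$, and the map $j\colon U\to A\#_\theta R$, $x\mapsto 1\#j_R(x)$, is a convolution-invertible right $U$-comodule map, i.e.\ a cleaving. Thus $A\subseteq A\#_\theta R$ is a cleft $U$-extension, hence (by the standard structure theorem for cleft extensions) a crossed product $A\#^{\,\cdot}_{\bar\sigma}U$ with weak action $x\cdot a=j(x_{(1)})(a\#1)j^{-1}(x_{(2)})$ and cocycle $\bar\sigma(x\ot y)=j(x_{(1)})j(y_{(1)})j^{-1}(x_{(2)}y_{(2)})$. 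Since $1\#R$ is a subalgebra of $A\#_\theta R$ and $j_R$ is a $\sigma$-cleaving, one computes $j(x)j(y)=1\#\bigl(\sigma(x_{(1)},y_{(1)})j_R(x_{(2)}y_{(2)})\bigr)$, whence $\bar\sigma=\sigma\cdot 1_A$ is scalar-valued. Consequently $\cdot$ is a genuine $U$-module algebra structure on $A$, the crossed product is the twisted smash product $A\#{}_\sigma U$ of Lemma~\ref{lem:twistedsmash}, and in particular $\theta$ is the twisting map of that lemma; Lemma~\ref{lem:compatsemi} then yields the left $U$-linearity of $\rho_A$, completing the proof.

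I expect the main obstacle to be exactly this middle step. The descent bookkeeping of Theorem~\ref{thm:bicrossedtwist1} is formal, but one must genuinely verify that the abstract twisting map produced there is rigidly pinned down, once $R$ is cleft, to be the twisted smash product recipe; everything should hinge on recognizing $A\subseteq A\#_\theta R$ as a cleft $U$-extension with coinvariant subalgebra $A$, and on the identity $\bar\sigma=\sigma\cdot 1_A$. Granting those, the passage ``crossed product with scalar cocycle $=$ twisted smash product'' and the appeal to Lemma~\ref{lem:compatsemi} are routine.
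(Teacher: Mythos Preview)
Your proof is correct, and for part (1) it is identical to the paper's. For part (2) you reach the same conclusion as the paper but by a genuinely different route.

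The paper's argument is shorter and avoids crossed product technology entirely. After writing $J\simeq A\#_\theta R$ with $R={}_\sigma U$, it observes that the triviality of $\tl$ in Diagram~\eqref{eqn:diagram-galois} forces $\theta\colon U\ot A\to A\ot U$ to be \emph{right $U$-colinear} (for the comodule structures induced by $\Delta_U$). Since $A\ot U$ is cofree as a $U$-comodule, any such colinear map has the form $\theta(x\ot a)=\mu(x_{(1)}\ot a)\ot x_{(2)}$ for a uniquely determined linear map $\mu\colon U\ot A\to A$; the twisting-map axioms for $\theta$ then translate directly into the $U$-module algebra axioms for $\mu$. One then appeals to Lemma~\ref{lem:compatsemi} exactly as you do.

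Your approach instead promotes $A\subseteq A\#_\theta R$ to a cleft $U$-extension via $\pi_U$ (this uses, and you correctly note, that $\pi_U$ is an algebra map precisely when $\tl$ is trivial), invokes the Doi--Takeuchi/Blattner--Cohen--Montgomery structure theorem to obtain a crossed product $A\#^{\cdot}_{\bar\sigma}U$, and checks $\bar\sigma=\sigma\cdot 1_A$ is scalar so that the weak action is a genuine action. This is heavier machinery but perfectly valid, and it has the mild advantage of making the identification $\theta(x\ot a)=x_{(1)}\cdot a\ot x_{(2)}$ emerge from a named theorem rather than a cofreeness argument. The paper's route, on the other hand, is self-contained and makes transparent exactly which feature of the semi-direct product (the $U$-colinearity of $\theta$) is doing the work.
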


\begin{proof}
	The first assertion is a direct consequence of Lemma \ref{lem:compatsemi} and of (the first part of) Theorem \ref{thm:bicrossedtwist1}. Let $J$ be a right $E$-Galois object. By the second part of Theorem \ref{thm:bicrossedtwist1}, we have $J \simeq A\#_\theta R$ where $A$ is a right $H$-Galois object, $R$ is a right $U$-Galois object, and $\theta : R\otimes A\to A\otimes R$ is a twisting map making Diagram (\ref{eqn:diagram-galois}) commute. By our assumption on $U$, we can assume that $R={_\sigma U}$ for some $2$-cocycle $\sigma : U\otimes U \to k$. The commutativity of Diagram (\ref{eqn:diagram-galois}) implies that
	$$\theta : U\otimes A \to A\otimes U$$ 
	is right $U$-colinear (where $U\otimes A$ and $A\otimes U$ have the comodule structure induced by the comultiplication of $U$). A standard argument then shows that there is a linear map $\mu : U\otimes A \to A$ such that 
	$$\theta = (\mu\otimes{\rm id}_U) \circ \alpha_{U\otimes A}$$
		with $\alpha_{U\otimes A}(x\otimes a)=x_{(1)}\otimes a \otimes x_{(2)}$. Thus if we denote $\mu(x\otimes a)= x\cdot a$, we have:
		$$\theta (x\otimes a) = x_{(1)}\cdot a \otimes x_{(2)}, \qquad x\in U, \ a\in A.$$
		Since $\theta$ is a twisting map we get that $\mu$ defines a left $U$-module algebra structure on $A$. Then, again, the commutativity of Diagram (\ref{eqn:diagram-galois}) ensures, by  Lemma \ref{lem:compatsemi}, that the right coaction $\rho_A : A \to A\otimes H$ is left $U$-linear, and this finishes the proof.
	\end{proof}

\begin{theorem}\label{thm:hopfgaloissemidirect}
	Let  $E = H\rtimes U$ be a semi-direct product Hopf algebra, with $U$ cocommutative.
	\begin{enumerate}
		\item Let $L$ be a Hopf algebra, let $A$ be an $(L,H)$ bi-Galois object, and let $\sigma : U\otimes U \to k$ be a $2$-cocycle. Assume that $U$ acts on $A$ as a $U$-module algebra and that the right coaction $\rho_A : A \to A\otimes H$ is left $U$-linear, so that $A\# {_\sigma U}$ is right $H\rtimes U$-Galois object. Then the associated left Hopf algebra $F = L(A\# {_\sigma U}, H\rtimes U)$ is a semi-direct Hopf algebra $L\rtimes U$, where the action of $U$ on $L$ is such that the left coaction $\lambda_A : A \to L\ot A$ is left $U$-linear.
		\item Conversely, any Hopf algebra $F$ such that there exists an $F-H\rtimes U$  bi-Galois object satisfies $F\simeq L \rtimes U$, for $U$ acting on $L$ as above.
	\end{enumerate}
\end{theorem}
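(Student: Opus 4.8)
The plan is to obtain the theorem as the semi-direct specialization of Theorem~\ref{thm:bicrossedtwist2}, in exactly the way that Theorem~\ref{thm:galoissemidirect} specializes Theorem~\ref{thm:bicrossedtwist1}: realize $F$ as a bicrossed product $L\btb Q$, and then use that both the second factor $Q$ and the twisting map $\theta$ are of semi-direct type to force the action of $L$ on $Q$ to be trivial. First I would record two preliminary facts. Since $R={}_\sigma U$ is a left $U^\sigma$-Galois object (Subsection~\ref{subsec:cocycle}), we have $Q:=L(R,U)=U^\sigma$; and because $U$ is cocommutative, expanding the product \eqref{eqn:cocycle-def} of $U^\sigma$ and using cocommutativity to bring $\sigma$ and $\sigma^{-1}$ next to each other so that they cancel to $\sigma\ast\sigma^{-1}=\varepsilon\otimes\varepsilon$, one obtains $U^\sigma=U$ as Hopf algebras. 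Thus $Q\simeq U$ and the left coaction $\lambda_R:{}_\sigma U\to U\otimes{}_\sigma U$ is simply $\Delta_U$. Also $S_U^2=\id_U$ (as $U$ is cocommutative), so the twisting map $\theta(x\otimes a)=x_{(1)}\cdot a\otimes x_{(2)}$ of Lemma~\ref{lem:twistedsmash} is invertible, with $\theta^{-1}(a\otimes x)=x_{(1)}\otimes S_U(x_{(2)})\cdot a$.

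For part~(1): $A$ is an $(L,H)$-bi-Galois object, $R={}_\sigma U$ is a $(U,U)$-bi-Galois object, and Lemma~\ref{lem:compatsemi} (using the hypothesis that $\rho_A$ is left $U$-linear) shows that $\theta$ makes \eqref{eqn:diagram-galois} commute; so Theorem~\ref{thm:bicrossedtwist2}(1) yields $F=L(A\#{}_\sigma U,\,H\rtimes U)\simeq L\btb Q\simeq L\btb U$, together with matched-pair actions $\btr\colon U\otimes L\to L$ and $\btl\colon U\otimes L\to U$ for which \eqref{eqn:diagram-galois-left} commutes. It remains to prove that $\btl$ is trivial — so that $F\simeq L\rtimes U$ — and to identify the $U$-action on $L$. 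Writing $\lambda_A(a)=a_{(-1)}\otimes a_{(0)}$ with $a_{(-1)}\in L$, and chasing \eqref{eqn:diagram-galois-left} with $\theta(x\otimes a)=x_{(1)}\cdot a\otimes x_{(2)}$ and $\lambda_R=\Delta_U$, one gets
\begin{align*}
&(x_{(1)}\cdot a)_{(-1)}\otimes x_{(2)}\otimes (x_{(1)}\cdot a)_{(0)}\otimes x_{(3)}\\
&\qquad = (x_{(1)}\btr a_{(-2)})\otimes (x_{(2)}\btl a_{(-1)})\otimes (x_{(3)}\cdot a_{(0)})\otimes x_{(4)}.
\end{align*}
Applying $\id_L\otimes\varepsilon_Q\otimes\id_A\otimes\varepsilon_R$ and using that $\btl$ is a coalgebra map gives $\lambda_A(x\cdot a)=x_{(1)}\btr a_{(-1)}\otimes x_{(2)}\cdot a_{(0)}$; that is, $\lambda_A$ is left $U$-linear for the action $\btr$ on $L$, which is the announced module structure (and it is the unique such action, since $A$ as a left $L$-comodule has the property that the left-hand components of $\lambda_A$ span $L$).

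The delicate point is the triviality of $\btl$. Since $\theta$ is invertible, \eqref{eqn:diagram-galois-left} can be rewritten as $(\id_L\otimes\id_Q\otimes\theta^{-1})\circ\lambda_{A\otimes R}\circ\theta = (\omega_{\btb}\otimes\id_{R\otimes A})\circ\lambda_{R\otimes A}$. Evaluating this on $x\otimes a$, substituting the $U$-linearity of $\lambda_A$ just obtained, and using cocommutativity of $U$ together with the antipode to telescope the extra pair of legs produced by $\theta^{-1}$, the identity collapses, after applying $\varepsilon_L\otimes\id_Q\otimes\varepsilon_R\otimes\id_A$, to
\[
x\otimes a = (x\btl a_{(-1)})\otimes a_{(0)} \qquad\text{in } U\otimes A .
\]
Since the left-hand components $a_{(-1)}$ of $\lambda_A$ span $L$, comparing both sides against the counit axiom $(\varepsilon_L\otimes\id_A)\circ\lambda_A=\id_A$ forces $x\btl\ell=\varepsilon_L(\ell)\,x$ for all $\ell\in L$, i.e. $\btl$ is trivial; hence $F\simeq L\rtimes U$, and by \eqref{eqn:compatibilities-matchedpair} the action $\btr$ is indeed a $U$-module algebra structure on $L$. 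I expect this extraction to be the main obstacle: $A$ carries no counit, so one cannot simply ``evaluate away'' the $A$-slot, and the trick is precisely to first pull out the $U$-linearity of $\lambda_A$ (which only uses the counit of $Q$), feed it back into the diagram, and then exploit cocommutativity of $U$ and $S_U^2=\id$ to reduce to the above identity in $U\otimes A$.

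Part~(2) is then immediate: if $J$ is an $(F,H\rtimes U)$-bi-Galois object, then since $U$ is cocommutative every right $U$-Galois object is cleft, so Theorem~\ref{thm:galoissemidirect}(2) gives $J\simeq A\#{}_\sigma U$ with $A$ a right $H$-Galois object, $\sigma\in Z^2(U)$, $U$ acting on $A$ and $\rho_A$ left $U$-linear; putting $L=L(A,H)$ (so that $A$ is $(L,H)$-bi-Galois) and applying part~(1) yields $F\simeq L(J,H\rtimes U)\simeq L\rtimes U$ with $U$ acting on $L$ so that $\lambda_A$ is left $U$-linear, as desired.
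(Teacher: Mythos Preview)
Your overall strategy coincides with the paper's: invoke Theorem~\ref{thm:bicrossedtwist2} to obtain $F\simeq L\btb Q$, identify $Q=U^\sigma=U$ via cocommutativity, chase diagram~\eqref{eqn:diagram-galois-left} down to the identity
\[
x\otimes a=(x\btl a_{(-1)})\otimes a_{(0)}\quad\text{in }U\otimes A,
\]
and conclude that $\btl$ is trivial. Part~(2) is handled identically in both.

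There is one genuine gap, precisely at the step you flagged as delicate. From the displayed identity you write: ``Since the left-hand components $a_{(-1)}$ of $\lambda_A$ span $L$, comparing both sides against the counit axiom forces $x\btl\ell=\varepsilon_L(\ell)x$.'' Spanning of $L$ by the Sweedler components is \emph{not} sufficient: what you actually have is $(\psi\otimes\id_A)\circ\lambda_A=0$ for $\psi(\ell)=x\btl\ell-\varepsilon_L(\ell)x$, and to deduce $\psi=0$ you need more than that the first legs of $\lambda_A$ span $L$ (there could be cancellations against the second legs). What is needed is the full left Galois bijection $A\otimes A\to L\otimes A$, $a\otimes b\mapsto a_{(-1)}\otimes a_{(0)}b$. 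The paper uses it explicitly: given $\ell\in L$, choose $\sum_i a_i\otimes b_i\in A\otimes A$ with $\ell\otimes 1_A=\sum_i a_{i(-1)}\otimes a_{i(0)}b_i$; then
\[
x\btl\ell\otimes 1_A=\sum_i(x\btl a_{i(-1)})\otimes a_{i(0)}b_i=\sum_i x\otimes a_ib_i=\varepsilon_L(\ell)\,x\otimes 1_A,
\]
using the key identity in the middle equality. This is the missing ingredient in your argument.

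A minor stylistic remark: your route to the key identity (first extract $U$-linearity of $\lambda_A$ via $\varepsilon_Q$, then feed it back through $\theta^{-1}$) works but is more circuitous than necessary. The paper simply applies $\varepsilon_L$ to the first slot of the equation coming from~\eqref{eqn:diagram-galois-left} and then lets $S_U$ of the last $U$-leg act on the middle $A$-leg; cocommutativity collapses everything to $x\btl a_{(-1)}\otimes a_{(0)}=x\otimes a$ in one step. The $U$-linearity of $\lambda_A$ is then read off afterwards from the same diagram.
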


\begin{proof} (1) We know from Theorem \ref{thm:bicrossedtwist2} that $F\simeq L\btb U^\sigma$ and we have to prove that the right action $\btl \colon U^\sigma \ot L\to U^\sigma$  is trivial. Recall also from Theorem \ref{thm:bicrossedtwist2} that the following  commutes:
		\begin{align*}
	\xymatrix{ U\ot A \ar[rr]^\theta  \ar[d]^{ \lambda_{U \ot A}}& & A\ot U \ar[d]^{\lambda_{A\ot U}}\\
		U\ot L\ot  U\ot A  \ar[rr]^{\ \omega_{\btb}\ot\theta}& & L\ot U\ot A \ot U}
	\end{align*}	
	where we omit the $\sigma$ subscripts since only coalgebras and comodules structures are involved in the diagram. We thus have, for $x\in U$ and $a \in A$,
	$$\omega_{\btb}(x_{(1)}\ot a_{(-1)}) \ot x_{(2)}\cdot a_{(0)} \ot x_{(3)}= (x_{(1)}\cdot a)_{(-1)}\ot x_{(2)} \ot (x_{(1)}\cdot a)_{(0)} \otimes x_{(3)}$$
	and hence
		\begin{align}\label{eqn:diag} 
			x_{(1)} \btr a_{(-2)}\ot x_{(2)} \btl a_{(-1)}  \ot x_{(2)}\cdot a_{(0)} \ot x_{(3)}= (x_{(1)}\cdot a)_{(-1)}\ot x_{(2)} \ot (x_{(1)}\cdot a)_{(0)} \otimes x_{(3)}.
		\end{align}
	Applying the counit to the first factor, this gives
	\[
x_{(1)} \btl a_{(-1)}  \ot x_{(2)}\cdot a_{(0)} \ot x_{(3)}= x_{(2)} \ot x_{(1)}\cdot a \otimes x_{(3)}.	
	\]
	Applying the antipode of $U$ on the right term and making it act on the middle one, gives by the cocommutativity of $U$
\[
 x\btl a_{(-1)}  \ot a_{(0)} = x \ot  a.
\]
For $l \in L$, let $\sum_i a_i \otimes b_i \in A\ot A$ be such that $l\otimes 1 = \sum_ia_{i(-1)}\ot a_{i(0)}b_i$ ($A$ is left $A$-Galois). We then have
\begin{align*}
x \btl l \ot 1 & = \sum_i x\btl a_{i(-1)}\ot a_{i(0)}b_i = \sum_i x \otimes a_ib_i = \varepsilon(l) x \otimes 1
\end{align*}		
and this proves that the right action $\btl$ is indeed trivial. Another glance at Equation \ref{eqn:diag} then shows that the left coaction $\lambda_A : A \to L\ot A$ is left $U$-linear, as claimed.

(2) follows  from Theorem \ref{thm:galoissemidirect}, from the first assertion,  and from the uniqueness of the left Hopf algebra associated to a right Galois object.
\end{proof}	

In the situation with a trivial cocycle,  Theorem \ref{thm:galoissemidirect} gives:

\begin{corollary}
Let   $E = H\rtimes U$ be a semi-direct product Hopf algebra, let $A$ be a $L$-$H$-bi-Galois objets and assume that $U$ acts on $A$ and $L$ as $U$-module algebras  and that the right coaction $\rho_A : A \to A\otimes H$ is left $U$-linear and the left coaction $\lambda_A : A \to L\ot A$ is left $U$-linear. Then the smash product $A\# U$ is a $L\rtimes U$-$H\rtimes U$-bi-Galois object.
\end{corollary}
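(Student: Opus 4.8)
The plan is to derive this as the $\sigma$-trivial case of Theorems~\ref{thm:galoissemidirect} and~\ref{thm:hopfgaloissemidirect}, taking $\sigma=\varepsilon_U\otimes\varepsilon_U$. For this cocycle ${}_\sigma U=U$ as an algebra, the twisting map of Lemma~\ref{lem:twistedsmash} is the usual smash-product twisting map $\theta(x\otimes a)=x_{(1)}\cdot a\otimes x_{(2)}$, and the twisted smash product $A\#{}_\sigma U$ is simply the smash product $A\#U$. I work, as in Theorem~\ref{thm:hopfgaloissemidirect}, under the standing hypothesis that $U$ is cocommutative --- this is what makes $L\rtimes U$ a semi-direct product Hopf algebra --- and I identify $L$ with $L(A,H)$.

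The right-comodule-algebra part is immediate: since $\rho_A\colon A\to A\otimes H$ is left $U$-linear, Lemma~\ref{lem:compatsemi} (with $R={}_\sigma U=U$) says that Diagram~\eqref{eqn:diagram-galois} commutes for $\theta$, and Theorem~\ref{thm:galoissemidirect}(1) then gives that $A\#U$ is a right $H\rtimes U$-Galois object, with right coaction $a\#x\mapsto(a_{(0)}\#x_{(1)})\otimes(a_{(1)}\tb x_{(2)})$ coming from the tensor coalgebra $H\otimes U$.

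For the bi-Galois claim, Theorem~\ref{thm:hopfgaloissemidirect}(1) applied with $\sigma$ trivial shows that the unique Hopf algebra making $A\#U$ left Galois, namely $F=L(A\#U,\,H\rtimes U)$, is a semi-direct product $L\rtimes U$ for \emph{some} $U$-module-algebra action on $L$ rendering $\lambda_A\colon A\to L\otimes A$ left $U$-linear. It remains only to see that the action provided by the hypothesis is this one, which is a uniqueness statement: if $\btr$ makes $\lambda_A$ left $U$-linear and $A$ a $U$-module algebra, then the left Galois map $\mathrm{can}_L\colon A\otimes A\to L\otimes A$, $a\otimes b\mapsto a_{(-1)}\otimes a_{(0)}b$, is left $U$-linear (a short Sweedler computation using the module-algebra axiom), and writing $l\otimes 1=\sum_i a_{i(-1)}\otimes a_{i(0)}b_i$ with $\sum_i a_i\otimes b_i=\mathrm{can}_L^{-1}(l\otimes 1)$ one gets
\[
(x\btr l)\otimes 1=x\cdot(l\otimes 1)=\mathrm{can}_L\Big(\sum_i (x_{(1)}\cdot a_i)\otimes(x_{(2)}\cdot b_i)\Big),
\]
whose right-hand side depends only on the $U$-module structure of $A$ and on $\lambda_A$; since $\mathrm{can}_L$ is injective, this pins down $\btr$. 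Hence $F\simeq L\rtimes U$ with precisely the structure of the statement, and $A\#U$ is an $(L\rtimes U)$-$(H\rtimes U)$-bi-Galois object, the left coaction being the one built from $\lambda_A$ and the comultiplication of $U$ as in Theorem~\ref{thm:bicrossedtwist2}.

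The one point requiring care is exactly this uniqueness of the $U$-action on $L$ --- that the ``action making $\lambda_A$ $U$-linear'' produced abstractly by Theorem~\ref{thm:hopfgaloissemidirect} really is the given one; the mechanism is the $\mathrm{can}_L$-density argument above, the left analogue of the computation used at the end of the proof of Theorem~\ref{thm:hopfgaloissemidirect}(1) to show the action $\btl$ there is trivial. A second, minor caveat is that the bi-Galois conclusion (as opposed to the right-Galois one) inherits the cocommutativity hypothesis on $U$, needed both for $L\rtimes U$ to be a Hopf algebra and in the proof of Theorem~\ref{thm:hopfgaloissemidirect}; the statement should be read in that setting, or with the added assumption that $L\rtimes U$ is a semi-direct product Hopf algebra. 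Beyond this, the proof is a routine unwinding of the already-established results.
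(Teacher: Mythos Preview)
Your proof is correct and follows the paper's approach: the paper states this corollary as the trivial-cocycle specialization, with the one-line justification ``In the situation with a trivial cocycle, Theorem~\ref{thm:galoissemidirect} gives,'' and offers no further argument. Your proposal fills in what the paper leaves implicit --- invoking Theorem~\ref{thm:hopfgaloissemidirect} for the left side and supplying the $\mathrm{can}_L$-based uniqueness argument to identify the abstractly-produced $U$-action on $L$ with the given one --- and correctly flags the cocommutativity hypothesis; this is more careful than the paper but not a different route.
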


\begin{example}
Let $U=k\lg t | t^2=1 \rg$ be the group algebra of the cyclic group $C_2$  and consider matrices ${\bf p}=(p_{ij}), {\bf q}=(q_{ij})\in M_n(k)$ with $p_{ii}=1=q_{ii}$ and $p_{ij}p_{ji}=1=q_{ij}q_{ji}$ for every $1\leq i,j\leq n$.	
The algebra $\Oo_{{\bf p},{\bf q}}=\Oo_{{\bf p},{\bf q}}(\GL_n(k))$ associated to such a pair of matrices ${\bf p},{\bf q}$ is defined by $n^2$ generators $x_{ij}, y_{ij}$, $1\leq i,j\leq n$ and relations 
	\begin{align*}
		x_{kl}x_{ij}&=p_{ki}q_{jl}x_{ij}x_{kl}, & y_{kl}y_{ij}&=p_{k,i}q_{jl}y_{ij}y_{kl}, & y_{kl}x_{ij}&=p_{ik}q_{lj}x_{ij}y_{kl},
	\end{align*} 
	together with $\sum_{k=1}^{n}x_{ik}y_{jk}=\delta_{i,j}=\sum_{k=1}^{n}x_{ki}y_{kj}$.
	
	In this way $\Oo_{\bf p}\coloneqq \Oo_{{\bf p},{\bf p}}$ and $\Oo_{\bf q}\coloneqq \Oo_{{\bf q},{\bf q}}$ are Hopf algebras and $\Oo_{{\bf p},{\bf q}}$ is a $(\Oo_{{\bf p}},\Oo_{{\bf q}})$-bi-Galois object \cite[3.4]{bic}, with comultiplication, resp.~coaction, induced by the comatrix comultiplication
	\[
	x_{ij}\mapsto \sum_{k=1}^{n}x_{ik}\ot x_{kj}, \qquad y_{ij}\mapsto \sum_{k=1}^{n}y_{ik}\ot y_{kj}.
	\]
	The antipode in these Hopf algebras is determined by $S(x_{ij})=y_{ji}$, $S(y_{ij})=x_{ij}$.
	
	
	Then $U$ acts as an $U$-module algebra on all $H=\Oo_{\bf q}$, $A=\Oo_{{\bf p},{\bf q}}$ and $L=\Oo_{\bf p}$ by switching $x_{ij}\leftrightarrow y_{ij}$, and it is immediate that the involved coactions are $U$-linear. Hence $\Oo_{\bf p, \bf  q}\# U$ is a $\Oo_{\bf p}\rtimes U$-$\Oo_{\bf q}\rtimes U$-bi-Galois object
	
	
\end{example}



\subsection{Unrolled Hopf algebras}\label{sec:unrolled}
A particular instance where a semidirect product $H\rtimes U$ as in Theorem \ref{thm:galoissemidirect} is considered is the setting of \textit{unrolled Hopf algebras}: here $H$ is a Hopf algebra and  $U=U(\g)$, where $\g$ is a Lie algebra acting on $H$ by $k$-biderivations (i.e.~by endomorphisms that are both derivations and co-derivations); see \cite{as} for details.

 We may re-brand Theorem \ref{thm:galoissemidirect} into the following.

\begin{corollary}\label{cor:unrolled}
Let $H$ and $\g$ as above and assume there is a right $H$-Galois object on which $\g$ acts by $k$-biderivations. Let $F=L(A,H)$ and let $\sigma : U(\g)\otimes U(\g) \to k$ be a $2$-cocycle. Then the left Hopf algebra $L(A\#_\sigma U,H\rtimes U)$ is an unrolled Hopf algebra $F\rtimes U(\g)$. Conversely, any Hopf algebra $F$ for which there is an $F-H\rtimes U(\g)$ bi-Galos object is an unrolled Hopf algebra.\qed
\end{corollary}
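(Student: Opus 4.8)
The plan is to deduce Corollary~\ref{cor:unrolled} by specializing Theorem~\ref{thm:galoissemidirect} and Theorem~\ref{thm:hopfgaloissemidirect} to the case $U = U(\g)$, for which both results are available: $U(\g)$ is cocommutative (so that \eqref{eqn:cocosemi} holds automatically and Theorem~\ref{thm:hopfgaloissemidirect} applies), and every right $U(\g)$-Galois object is cleft (so the cleftness hypotheses of Theorem~\ref{thm:galoissemidirect}(2) and of Theorem~\ref{thm:hopfgaloissemidirect} are met). Once these two facts are in place, the statement becomes a direct translation of the two theorems into the language of unrolled Hopf algebras.

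The step carrying essentially all of the content is the dictionary between $\g$-actions by $k$-biderivations and $U(\g)$-module structures. For an algebra $B$, an action of $\g$ by derivations is the same as a $U(\g)$-module algebra structure on $B$; for a coalgebra $B$, an action of $\g$ by coderivations is the same as a $U(\g)$-module coalgebra structure; and for a right $H$-comodule algebra $B$ with coaction $\rho$, the requirement that each $\partial_x$ ($x\in\g$) be a coderivation relative to $\rho$, i.e.\ $\rho\circ\partial_x = (\partial_x\otimes\id + \id\otimes\partial_x)\circ\rho$, is the same as $\rho$ being left $U(\g)$-linear. Each of these equivalences is checked on the primitive generators $x\in\g$, where $\Delta(x) = x\otimes 1 + 1\otimes x$ turns the module-coalgebra (resp.\ colinearity) axiom into precisely the coderivation identity, and extended to all of $U(\g)$ multiplicatively. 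Consequently, a semi-direct product Hopf algebra of the form $B\rtimes U(\g)$ is exactly an unrolled Hopf algebra built on $B$, and the hypothesis ``$\g$ acts on the right $H$-Galois object $A$ by $k$-biderivations'' is exactly the conjunction ``$A$ is a $U(\g)$-module algebra and $\rho_A$ is left $U(\g)$-linear''.

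With this dictionary, the first assertion is immediate: the hypotheses are precisely those of Theorem~\ref{thm:galoissemidirect}(1) and Theorem~\ref{thm:hopfgaloissemidirect}(1) with $U = U(\g)$ and $L = F = L(A,H)$, so $L(A\#_\sigma U(\g),\, H\rtimes U(\g)) \simeq F\rtimes U(\g)$, with $U(\g)$ acting on $F$ so that $\lambda_A\colon A\to F\otimes A$ is left $U(\g)$-linear; by the dictionary this action is an action of $\g$ on $F$ by $k$-biderivations, so $F\rtimes U(\g)$ is unrolled. For the converse, an $F$-$H\rtimes U(\g)$ bi-Galois object decomposes as $A\#_\sigma U(\g)$ by Theorem~\ref{thm:galoissemidirect}(2) (using cleftness of $U(\g)$-Galois objects), and Theorem~\ref{thm:hopfgaloissemidirect}(2) then yields $F\simeq L\rtimes U(\g)$ as a semi-direct product Hopf algebra, which the dictionary identifies as an unrolled Hopf algebra. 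I do not expect a genuine obstacle here: the argument is pure bookkeeping, and the only point requiring care is pinning down the notion of a $k$-biderivation on a comodule algebra so that it matches exactly the left $U$-linearity conditions appearing in Theorems~\ref{thm:galoissemidirect}--\ref{thm:hopfgaloissemidirect}.
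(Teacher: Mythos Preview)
Your proposal is correct and follows the same approach as the paper, which simply marks the corollary with a \qed\ and introduces it as a ``re-brand'' of Theorem~\ref{thm:galoissemidirect} (together with Theorem~\ref{thm:hopfgaloissemidirect}) in the unrolled setting. You are more explicit than the paper in spelling out the dictionary between $\g$-actions by biderivations and $U(\g)$-linearity, and in noting the cleftness of $U(\g)$-Galois objects, but this is exactly the bookkeeping the paper leaves implicit.
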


We now specialize to the context of unrolled quantum $\sl_2$ in \cite{cgp}, where $H$ is (a cover of) the small quantum group $\mathfrak{u}_q(\sl_2)$ and $U=k[X]$ (namely $\dim \g=1$).
As an illustration, we revisit this situation within our setup. 

We fix $1<\ell\in\N$ and  $q\in k$ a primitive $2\ell$th root of 1. We consider the Hopf $k$-algebra $H\coloneqq \bar{U}_q(\sl_2)$ generated by $E,F,K^{\pm1}$ and relations
\begin{align*}
K^{\pm1}K^{\mp1}&=1, & KE&=q^2EK, & KF&=q^{-2}FK, & EF-FE&=\frac{K-K^{-1}}{q-q^{-1}}, & E^\ell&=0, &F^\ell&=0.
\end{align*}

Let $U=k[X]$ denote the polynomial algebra on a variable $X$ (or the enveloping algebra of the one-dimensional Lie algebra $\g=k$) and let $k[X]$ act on $H$ via
\begin{align}\label{eqn:action-unrolled}
	X\tr K^{\pm1}&=0, & X\tr E&=2E, & X\tr F&=-2F.
\end{align}
This defines a semidirect product Hopf algebra $H\rtimes k[X]$, with commutators:
\begin{align*}
	[X,K]=0, \qquad [X,E]=2E, \qquad [X,F]=-2F.
\end{align*} 
This algebra is denoted $\bar{U}_q^X(\sl_2)$ in \cite{cgp} and called the \textit{unrolled quantum group of $\sl_2$}. 

We observe that the algebra $H=\bar{U}_q(\sl_2)$ fits into a chain of quotients
\begin{align*}
	U_q(\sl_2)\twoheadrightarrow H\twoheadrightarrow \mathfrak{u}_q(\sl_2)
\end{align*}
as  $H=U_q(\sl_2)/\lg E^\ell,F^\ell\rg$ and $\mathfrak{u}_q(\sl_2)=H/\lg K^{2\ell}-1\rg$. 
The cleft objects for the left and right ends of this sequence were computed in \cite[Lemma 16]{g}  and \cite[Lemma 25]{g}, respectively. 

The same tool developed therein, particularly \cite[Theorem 8]{g} together with its systematization \cite[\S5]{aagmv}, shows that the cleft objects for $H$ are the algebras $A_{(a,b,c)}$, $a,b,c\in k$, generated by $e,f,g^{\pm1}$ so that
\begin{align*}
	g^{\pm1}g^{\mp1}&=1, & ge&=q^2eg, & gf&=q^{-2}fg, & ef-fe&=ag-\frac{1}{q-q^{-1}}g^{-1}, & e^\ell&=b, & f^\ell&=c.
\end{align*}
The coaction $\rho\colon A_{(a,b,c)}\to A_{(a,b,c)}\ot H$ is as expected. Now \cite[Theorem 12]{g}, also \cite[\S5.5]{aagmv}, shows that the associated left Hopf algebra $L_{(a,b,c)}\coloneqq L(A_{(a,b,c)},H)$ is the algebra generated by $K^{\pm1}, E, F$ with commutation  $K^{\pm1}K^{\mp1}=1$, $KE=q^2EK$, $KF=q^{-2}FK$ and  the {\it liftings}:
\begin{align*}
EF-FE&=a\frac{K-K^{-1}}{q-q^{-1}}, & E^\ell&=b(1-K^{2\ell}), &F^\ell&=c(1-K^{2\ell}).
\end{align*}
We remark that $L_{(a,0,0)}\simeq H$ when $a\neq 0$ and $L_{(0,0,0)}\simeq k[E,F]\#k\Z$ is graded.

It is easy to check that for each $\lambda\neq 0$, the assignment  $X^n\ot X^m\mapsto \delta_{n,m}n!\lambda^n$ defines a Hopf cocycle $\sigma=\sigma_\lambda\colon U\ot U\to k$; moreover these are all such cocycles. The associated cleft object ${}_\sigma U$ is the vector space $k[X]$ with multiplication $X^rX^s=\sum\limits_{i=0}^{\min\{r,s\}}\lambda^i\binom{r}{i}\binom{s}{i}i!X^{r+s-2i}$.

\smallbreak

The following is a direct consequence of Theorem \ref{thm:galoissemidirect} and Corollary \ref{cor:unrolled}.
\begin{example}
Let us set $H=\bar{U}_q(\sl_2)$ and $U=k[X]$, as above. 
\begin{enumerate}
\item An $H$-Galois object $A_{(a,b,c)}$ is an $U$-module algebra in such a way that the coaction $\rho\colon A_{(a,b,c)}\to A_{(a,b,c)}\ot H$ is $U$-linear
 if and only if $b=c=0$. The action is
\begin{align*}
	X\cdot g^{\pm1}&=0, & X \cdot e&=2e, & X\cdot f&=-2f.
\end{align*}
\item The associated smash product algebra $A_{(a,0,0)}^X=A_{(a,0,0)}\#_\sigma U$ is a right $\bar{U}_q^X(\sl_2)$-Galois object and $L(A_{(a,0,0)}^X,\bar{U}_q^X(\sl_2))$ is a semidirect product $L_{(a,0,0)}^X=L_{(a,0,0)}\rtimes k[X]$, for $k[X]$ acting on $L_{(a,0,0)}$ via \eqref{eqn:action-unrolled}.
\end{enumerate}
It follows that $L_{(a,0,0)}^X\simeq \bar{U}_q(\sl_2)^X$ when $a\neq 0$.
\pf
Remember that the coinvariants $A_{(a,b,c)}^{\co H}=\{a\in A:\rho(a)=a\ot 1\}$ are trivial, namely $A_{(a,b,c)}^{\co H}\simeq k$. This implies that there is $\mu\in k$ so that $X\cdot g^{\pm1}=\pm\mu g^{\pm1}$. This fact, together with the linearity of $\rho$, sets $X\cdot e=2(\mu+1)e$ and $X\cdot f=-2(\mu+1)f$. As $X\cdot e^\ell=2\ell(\mu+1)e$, similarly for $f$, we get that either $\mu=-1$ or $b=c=0$. However, as $X\cdot (ef-fe)=0$, we get that $\mu=0$ and therefore (1) follows. (2) is Corollary \ref{cor:unrolled}.
\epf
\end{example}



\bibliographystyle{amsalpha}

\end{document}